\theoremstyle{plain}
\newtheorem{theorem}{Theorem}[section]
\newtheorem{lemma}[theorem]{Lemma}
\newtheorem{proposition}[theorem]{Proposition}
\newtheorem{corollary}[theorem]{Corollary}
\theoremstyle{definition}
\newtheorem{definition}[theorem]{Definition}
\newtheorem{example}[theorem]{Example}
\newtheorem{claim}{Claim}[section]
\newtheorem{question}[theorem]{Question}
\newcommand{\ua}{\mathord{\uparrow}}
\newcommand{\da}{\mathord{\downarrow}}
\newcommand{\n}{\mathbb{N}}
\newcommand{\kf}{{ \rm KF}}
\newcommand{\KF}{{\rm KF}_c}
\newcommand{\WD}{{\rm WD}_c}
\newcommand{\wds}{{\rm WD}}
\newcommand{\Max}{\mathrm{Max}}
\newcommand{\Irr}{{\rm Irr}_c}
\newcommand{\irr}{{\rm Irr}}
\newcommand{\R}{\mathbb{R}}
\newcommand{\cl}{{\rm cl}}
\newcommand{\op}{\mathcal{O}}
\newcommand{\si}{\mathcal{S}_c}
\newcommand{\K}{\mathcal{K}}
\newcommand{\X}{Zh(X)}
\newcommand{\Z}{\widehat{Zh(X)}}
\newcommand{\G}{\mathcal{G}}
\newcommand{\pp}{\hat{P}}
\newcommand{\A}{\mathcal{A}}
\newcommand{\B}{\mathcal{B}}
\newcommand{\Y}{\mathbb{Y}}
\newcommand{\kk}{\mathbb{K}}
\newcommand{\Aa}{\mathbb{A}}
\newcommand{\Bb}{\mathbb{B}}
\def\ps@pprintTitle{%
	\let\@oddhead\@empty
	\let\@evenhead\@empty
	\def\@oddfoot{\reset@font\hfil\thepage\hfil}
	\let\@evenfoot\@oddfoot
}
\begin{document}
	\begin{frontmatter}
		\title{Xi-Zhao Model Preserves WD Spaces\tnoteref{t1}}
		\tnotetext[t1]{This work is supported by National Natural Science Foundation of China (No.12231007) }
		\author{Siheng Chen}
		\ead{mathlife@sina.cn}
\address{School of Mathematics and Statistics, Changsha University of Science and Technology, Changsha, Hunan, 410114, China}
		\author{Qingguo Li\corref{a1}}
		\address{School of Mathematics, Hunan University, Changsha, Hunan, 410082, China}
		\cortext[a1]{Corresponding author.}
		\ead{liqingguoli@aliyun.com}
		\begin{abstract}
For a $T_1$ space $X$, Zhao and Xi constructed a dcpo model $\pp$, where $P$ is a  bounded complete algebraic poset model of $X$.
In this paper, we  formulate  the closed WD subsets of  the maximal point space $\Max(\pp)$ and the Scott space $\Sigma \pp$,  and then prove that $X$ is a WD space if and only if $\Sigma \pp$ is  a WD space.
 It is also shown that the sobrification $X^s$ (resp., the well-filtered reflection $X^w$)  of $X$ can be embedded into the sobrification $\pp^s$  (resp., the well-filtered reflection $\pp^w$) of $\pp$ as a saturated subspace.
Finally, we  introduce two new concepts ``${\rm H}$-model spaces" and ``weak ${\rm H}$-model spaces", and provide a general framework to prove that such $T_1$ spaces can be preserved by Xi-Zhao dcpo models.
		\end{abstract}
		\begin{keyword}
			Xi-Zhao dcpo model \sep  WD space \sep well-filtered reflection \sep ${\rm H}$-model space \sep weak ${\rm H}$-model space
			\MSC 54B20\sep 06B35\sep 06F30
		\end{keyword}
	\end{frontmatter}

	\section{Introduction}
The study of the relationship between  topological spaces and  order structures is a hot topic in domain theory.
The maximal point space which was founded in (\cite{weihrauch_embedding_1981}) is an important bridge from topological structure to ordered structure.
A \emph{poset model} of a topological space $X$ is a poset $P$ such that the set $\Max(P)$ of all maximal points of $P$ equipped with the relative Scott topology is homeomorphic to $X$.

For a certain space , there always exists a pertinency tool to build its poset model.
%
Edalat and  Heckmann \cite{Edalat98} proved that for every metric space $X$, the formal ball $\mathbf{B}X$ is a continuous poset model of $X$.
Liang and Keimel \cite{Liang04} obtained a continuous poset model of the Tychonoff space by constructing a special abstract basis.
%
For each $T_1$ space $X$, Zhao \cite{zhao09} constructed a  bounded complete algebraic poset model $\X$ composed by all nonempty intersection filters of the open set lattice $\mathcal{O}(X)$.
%

	For a given poset model of  a $T_1$  space $X$, a natural  question to ask is: if  $X$ has topological property $\mathcal{P}$, does the poset model have $\mathcal{P}$?
Positive answer to this problem for "first countable`` have been given in \cite{zhao09}.
In addition, the work about the Xi-Zhao dcpo model is very fruitful.
Based on \cite{zhao09}, Zhao and Xi proposed a new dcpo model  and revealed that a $T_1$ space $X$ is sober if and only if it has a sober dcpo model\cite{zhao18}.
The other positive answers are successively explored, such as weak well-filtereness, Baire property, Choquet-completeness and RD spaces(see \cite{xi17,he19,chen22,chen_xi-zhao_2023}).

In this paper, we consider the previous question related to  ``WD spaces" for the Xi-Zhao dcpo model.
It is  known that WD sets are deeply linked to the well-filtered reflections. 
Motivated by \cite{xu_t0_2020, shen_well-filtered_2019},  we combine two constructions of the well-filtered reflections of a $T_1$ space and its Xi-Zhao dcpo model to propose an equation characterization of closed WD subsets.
Then we prove that a $T_1$ space $X$ is a WD space if and only if the Xi-Zhao dcpo model of $X$ is  a WD space.
Moreover, we introduce two new kinds of $T_0$ spaces ---- ``${\rm H}$-model spaces" and ``weak ${\rm H}$-model spaces".
According to ${\rm H}$-model spaces, we provide a uniform approach to sober spaces, well-filtered spaces, Rudin spaces and WD spaces.
As to weak ${\rm H}$-model spaces, weak sober spaces and weak well-filtered spaces are included.
Finally, we show that a $T_1$ space $X$ is a ${\rm H}$-model space (resp., a weak ${\rm H}$-model space) if and only if the Xi-Zhao dcpo model of $X$ is  a ${\rm H}$-model space (resp., a weak ${\rm H}$-model space).
	\section{Preliminaries}
	Let's recall  some basic concepts and notations that will be used in this paper. For further details, we refer the reader to \cite{gierz_continuous_2003,goubault-larrecq_non-hausdorff_2013}
	
	A subset $D$ of a poset $P$ is \emph{directed} (resp., \emph{filtered}) if every nonempty finite subset of $D$ has an upper bound (resp., a lower bound) in  $D$.
	A poset $P$ is called a \emph{directed complete poset}, or a \emph{dcpo} for short, if  every directed subset has a supremum.
	For any subset $A$ of a poset $P$, we set
	$\ua A= \{ x\in P : x\geqslant a ~\mathrm{for ~some}~ a \in A\}$, and dually, $\da A= \{ x\in P : x\leqslant a ~\mathrm{for ~some}~ a \in A\}$.
	We write $\ua\{x\}=\ua x$ and $\da\{x\}=\da x$ for short.
	A subset $A$ is called a \emph{ lower set} (resp. an \emph{upper set}) if $A=\da A$ (resp. $A=\ua A$).
	A poset $P$ is called \emph{bounded complete}, if every subset with an upper bound has a supremum.
	
	A subset $U$ of a poset $P$ is \emph{Scott open} if  (i) $U $ is an upper set and  (ii)  for any directed subset $D$, $\sup D \in U $ implies $D \cap U \neq \emptyset$, whenever $\sup D$ exists.
	All Scott open subsets form a topology on $P$, denoted by $\sigma (P)$ and called the \emph{Scott topology} on $P$.
	The space $(P, \sigma (P))$ is written as $\Sigma P$.
	
	
	
	For a topological  space $X$, a nonempty subset $A$ is \emph{irreducible}  if $A \subseteq B\cup C$ implies $A \subseteq B$ or $A \subseteq C$, for any closed subsets $B$ and $C$.
	We denote the set of all irreducible  sets (resp., irreducible closed sets) of $X$ by $\irr(X)$ (resp., $\Irr(X)$).
	Let $Q(X)$ be the set of all compact saturated subsets of $X$ and $\op(X)$ the set of all open subsets  of $X$.

	\begin{lemma}[\cite{gierz_continuous_2003}]\label{irrset}
		For a topological space $X$, let $A\subseteq Y\subseteq X$. Then the following statements are equivalent:
		\begin{enumerate}[(1)]
			\item $A$ is irreducible in $Y$;
			\item $A$ is irreducible in $X$;
			\item $\cl_X(A)$ is irreducible in $X$.
		\end{enumerate}
	\end{lemma}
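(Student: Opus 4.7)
My plan is to prove the three equivalences as a short cycle, splitting the work into (1)$\Leftrightarrow$(2), which is a subspace-versus-ambient comparison, and (2)$\Leftrightarrow$(3), which is the standard ``irreducible iff closure is irreducible'' argument. Throughout I would use the basic facts that subspace-closed sets in $Y$ are precisely the sets of the form $B\cap Y$ for $B$ closed in $X$, and that for $B$ closed in $X$ one has $A\subseteq B$ iff $\cl_X(A)\subseteq B$; nonemptiness of $A$ transfers trivially across all three conditions.

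For (2)$\Rightarrow$(1) I would take any two closed sets $B_1,B_2$ in $Y$ with $A\subseteq B_1\cup B_2$, write $B_i=C_i\cap Y$ for $C_i$ closed in $X$, observe that $A\subseteq C_1\cup C_2$ since $A\subseteq Y$, invoke (2) to get $A\subseteq C_j$ for some $j$, and conclude $A\subseteq C_j\cap Y = B_j$. The direction (1)$\Rightarrow$(2) is symmetric: given $A\subseteq C_1\cup C_2$ with $C_i$ closed in $X$, restrict to $Y$ to obtain $A\subseteq (C_1\cap Y)\cup(C_2\cap Y)$, apply irreducibility in $Y$, and lift back.

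For (2)$\Rightarrow$(3), assume $\cl_X(A)\subseteq C_1\cup C_2$ with $C_i$ closed in $X$; then $A\subseteq C_1\cup C_2$, so (2) yields $A\subseteq C_j$ and hence $\cl_X(A)\subseteq C_j$ by closedness of $C_j$. For (3)$\Rightarrow$(2), if $A\subseteq C_1\cup C_2$ with $C_i$ closed in $X$, then $C_1\cup C_2$ is closed and contains $A$, so $\cl_X(A)\subseteq C_1\cup C_2$; then (3) gives $\cl_X(A)\subseteq C_j$, and hence $A\subseteq C_j$.

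There is essentially no obstacle here: the only place one can slip is forgetting that ``closed in $Y$'' means $B\cap Y$ for $B$ closed in $X$, and that $A\subseteq Y$ lets this restriction be done without losing information. I would write the argument compactly as the two implications (1)$\Leftrightarrow$(2) and (2)$\Leftrightarrow$(3), each occupying just a few lines.
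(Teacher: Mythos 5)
Your proof is correct; the paper states this lemma as a cited result from the literature and gives no proof of its own, so there is nothing to diverge from. Your argument — passing between closed sets of $Y$ and of $X$ via $B=C\cap Y$ for (1)$\Leftrightarrow$(2), and using that $A\subseteq C$ iff $\cl_X(A)\subseteq C$ for closed $C$ for (2)$\Leftrightarrow$(3) — is exactly the standard one.
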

	
	\begin{lemma}(\cite{erne_sober_nodate,heckmann_quasicontinuous_2013})\label{rudinlemma}
		Let $X$ be a topological space and $\K$ a filtered family of nonempty compact subsets of $X$. Any closed set $C$ that meets all members of $\K$ contains an 	irreducible closed subset $A$ that still meets all members of $\K$.
	\end{lemma}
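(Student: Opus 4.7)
The plan is to apply Zorn's Lemma to the poset of closed subsets of $C$ that still meet every member of $\K$, ordered by reverse inclusion, to extract a minimal such closed set, and then show that minimality together with the filtered property of $\K$ forces irreducibility.

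First I would define $\mathcal{F} = \{F \subseteq C : F \text{ is closed in } X \text{ and } F \cap K \neq \emptyset \text{ for every } K \in \K\}$. This family is nonempty since $C \in \mathcal{F}$. Order $\mathcal{F}$ by reverse inclusion. To apply Zorn's Lemma I need every chain $\mathcal{C} \subseteq \mathcal{F}$ (a descending chain under inclusion) to have an upper bound, which is $F_0 := \bigcap \mathcal{C}$. Closedness of $F_0$ is automatic; the key verification is that $F_0 \cap K \neq \emptyset$ for every $K \in \K$. Here I would fix $K \in \K$ and observe that $\{F \cap K : F \in \mathcal{C}\}$ is a filtered family of nonempty closed subsets of the compact set $K$; by compactness of $K$, this family has the finite intersection property and hence a nonempty total intersection, which is exactly $F_0 \cap K$. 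Zorn's Lemma then yields a minimal element $A \in \mathcal{F}$.

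Next I would show $A$ is irreducible. Suppose, for contradiction, that $A = B_1 \cup B_2$ where $B_1, B_2$ are proper closed subsets of $A$ (hence closed in $X$). By the minimality of $A$ in $\mathcal{F}$, neither $B_1$ nor $B_2$ belongs to $\mathcal{F}$, so there exist $K_1, K_2 \in \K$ with $B_1 \cap K_1 = \emptyset$ and $B_2 \cap K_2 = \emptyset$. Because $\K$ is filtered, there is $K_3 \in \K$ with $K_3 \subseteq K_1 \cap K_2$. But then
\[
A \cap K_3 = (B_1 \cup B_2) \cap K_3 \subseteq (B_1 \cap K_1) \cup (B_2 \cap K_2) = \emptyset,
\]
contradicting $A \in \mathcal{F}$. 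Hence $A$ is an irreducible closed subset of $C$ that still meets every member of $\K$.

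The only step that requires care is verifying that the intersection of a chain in $\mathcal{F}$ remains in $\mathcal{F}$; this is where the hypothesis that each member of $\K$ is compact (rather than merely closed) is essential, and it is the main technical obstacle. Everything else is a routine application of Zorn's Lemma and an elementary argument using the filtered property of $\K$.
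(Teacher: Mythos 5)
Your proof is correct; the paper itself cites this lemma (the topological Rudin Lemma) without proof, and your Zorn's Lemma argument --- taking a minimal closed subset of $C$ meeting all members of $\K$, using compactness of each $K$ to handle chains, and using filteredness of $\K$ to derive irreducibility --- is exactly the standard proof given in the cited references.
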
	
	
	\begin{definition}(\cite{shen_well-filtered_2019})
		A nonempty subset $A$ of a $T_0$  space $X$ is said to be a \emph{KF-set} if there exists a filtered family $\K\subseteq Q(X)$ such that  $\overline{A}$ is a minimal closed set that intersects all members of $\K$ ($\overline{A}\in m(\K)$ for short).
	\end{definition}
	Clearly, $A$ is a KF-set iff $\overline{A}$ a KF-set. Denote by $\kf(X)$ (resp., $\KF(X)$) the set of all  KF-subsets (resp., closed KF-subsets) of $X$.
	\begin{lemma}(\cite{shen_well-filtered_2019,xu_t0_2020})\label{map}
		Let $X$, $Y$ be two $T_0$ spaces and $f: X\longrightarrow Y$ a continuous mapping. The following statements  hold:
		\begin{enumerate}[(1)]
			\item if $A\in \irr(X)$, then $f(A)\in \irr(Y)$;
			\item if $A\in \kf(X)$, then $f(A)\in \kf(Y)$;
		\end{enumerate}
	\end{lemma}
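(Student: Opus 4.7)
\medskip
\noindent\textbf{Proof proposal.} Part (1) I would dispatch by the routine pullback argument: if $B,C$ are closed in $Y$ with $f(A)\subseteq B\cup C$, then by continuity the sets $f^{-1}(B)$ and $f^{-1}(C)$ are closed in $X$ and cover $A$; irreducibility of $A$ places $A$ in one of them, so $f(A)\subseteq B$ or $f(A)\subseteq C$, and nonemptiness of $f(A)$ follows from that of $A$.

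For part (2), the plan is to exhibit an explicit filtered family $\K'\subseteq Q(Y)$ for which $\overline{f(A)}\in m(\K')$. Starting from a witness $\K\subseteq Q(X)$ with $\overline{A}\in m(\K)$, I would set $\K'=\{\ua f(K\cap\overline{A}):K\in\K\}$. Each $K\cap\overline{A}$ is a nonempty closed subset of the compact $K$, hence compact; its image $f(K\cap\overline{A})$ is compact in $Y$, and its saturation $\ua f(K\cap\overline{A})$ is compact saturated (in any topological space the saturation of a compact set shares its open neighborhoods, hence is compact). Filteredness of $\K'$ is inherited from that of $\K$ by monotonicity of $K\mapsto\ua f(K\cap\overline{A})$. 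That $\overline{f(A)}$ meets every member of $\K'$ is clear: any $x\in K\cap\overline{A}$ yields $f(x)\in f(\overline{A})\cap f(K\cap\overline{A})\subseteq\overline{f(A)}\cap\ua f(K\cap\overline{A})$.

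The crux is minimality. Suppose $F\subseteq\overline{f(A)}$ is closed in $Y$ and meets every $\ua f(K\cap\overline{A})$. For each $K\in\K$, pick $y\in F$ with $y\geq f(x)$ for some $x\in K\cap\overline{A}$; since closed sets are lower sets in the specialization order, $f(x)\in F$, so $x\in\overline{A}\cap K\cap f^{-1}(F)$. Thus the closed set $\overline{A}\cap f^{-1}(F)\subseteq\overline{A}$ meets every $K\in\K$, and the minimality $\overline{A}\in m(\K)$ forces $\overline{A}\subseteq f^{-1}(F)$. Then $f(\overline{A})\subseteq F$, and closedness of $F$ gives $\overline{f(A)}\subseteq F$, which yields equality.

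I expect the main obstacle to be precisely the choice of $\K'$: the naive family $\{\ua f(K):K\in\K\}$ fails, because when $F$ only satisfies $F\cap\ua f(K)\neq\emptyset$, the preimage point obtained in $K\cap f^{-1}(F)$ need not lie in $\overline{A}$, so the minimality hypothesis on $\overline{A}$ is inaccessible. Intersecting with $\overline{A}$ before applying $f$ confines the entire pullback argument to $\overline{A}$ and unlocks the use of $\overline{A}\in m(\K)$.
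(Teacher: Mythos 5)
The paper states this lemma without proof, citing \cite{shen_well-filtered_2019,xu_t0_2020}, and your argument is correct and coincides with the standard one in those sources: part (1) is the usual preimage argument, and part (2) uses exactly the witness family $\{\mathord{\uparrow} f(K\cap\overline{A}):K\in\K\}$, with minimality transferred back via the closed set $\overline{A}\cap f^{-1}(F)$. Your closing remark correctly identifies why the naive family $\{\mathord{\uparrow} f(K):K\in\K\}$ would not suffice.
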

	For a topological space $X$, define $\mathcal{S}(X)=\{\{x\}:x\in X\}$ and $\si(X)=\{\overline{\{x\}}:x\in X\}$.
	For $\G\subseteq 2^{X}$ and $A\subseteq X$, let $\Box_{\G}A=\{G\in\G:G\subseteq A\}$ and $\Diamond_{\G}A=\{G\in\G: A\cap G\neq\emptyset\}$.
	The \emph{lower Vietoris topology} on $\G$ is the topology that has $\{\Diamond_{\G}U: U\in \op(X)\}$ as a subbase, and the resulting space is denoted by $P_H(\G)$.
	\begin{lemma}\label{embedding}(\cite{goubault-larrecq_non-hausdorff_2013})
		Let $X$ be a $T_0$ space.
		\begin{enumerate}[(1)]
			\item If $\G\subseteq \Irr(X)$, then $\{\Diamond_{\G}U: U\in \op(X)\}$ form a topology on $\G$ and $\{\Box_{\G}A:A{\rm ~is~ closed~ in~ }X\}$ are exactly all closed subsets of $\G$.
			\item  If $\si(X)\subseteq \G$, then the specialization order on $P_H(\G)$ is the set inclusion order, and the \emph{canonical
				mapping} $\eta_{X} : X \longrightarrow P_H(\G)$, given by $\eta_{X}(x) = \overline{\{x\}}$, is an order and topological embedding.
		\end{enumerate}
	\end{lemma}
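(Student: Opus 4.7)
The plan is to verify the two parts independently, working directly from the definition of the lower Vietoris topology together with basic properties of irreducible (closed) subsets. For part (1), I first check that $\mathcal{T}=\{\Diamond_{\G}U:U\in\op(X)\}$ is already closed under finite intersections and arbitrary unions, so it is a topology rather than merely a subbase. Arbitrary unions are immediate from $\bigcup_i\Diamond_{\G}U_i=\Diamond_{\G}\bigcup_i U_i$, and $\emptyset=\Diamond_{\G}\emptyset$, $\G=\Diamond_{\G}X$ (each irreducible set being nonempty). For finite intersections I use irreducibility of each $G\in\G$: if $G\cap U\neq\emptyset$ and $G\cap V\neq\emptyset$ but $G\cap(U\cap V)=\emptyset$, then $G\subseteq(X\setminus U)\cup(X\setminus V)$, and irreducibility (together with Lemma~\ref{irrset} applied to the closed sets $X\setminus U$, $X\setminus V$) forces $G$ into one of the pieces, contradicting one of the hypotheses. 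This yields $\Diamond_{\G}U\cap\Diamond_{\G}V=\Diamond_{\G}(U\cap V)$. The closed-set description then follows by complementation: $\G\setminus\Diamond_{\G}U=\{G\in\G:G\subseteq X\setminus U\}=\Box_{\G}(X\setminus U)$, so the closed subsets of $\G$ are exactly the $\Box_{\G}A$ for $A$ closed in $X$.

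For part (2), assume in addition $\si(X)\subseteq\G$. I first pin down the specialization order on $P_H(\G)$. By definition $F\leq G$ iff every open neighbourhood of $F$ contains $G$, which by part (1) is the same as: for every $U\in\op(X)$, $F\cap U\neq\emptyset$ implies $G\cap U\neq\emptyset$. The direction $F\subseteq G\Rightarrow F\leq G$ is immediate. Conversely, if $F\leq G$ and some $x\in F\setminus G$ existed, then (crucially using that $G\in\Irr(X)$ is closed in $X$) $U=X\setminus G$ would be open with $x\in F\cap U$ but $G\cap U=\emptyset$, a contradiction. Hence the specialization order on $P_H(\G)$ is set inclusion. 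For the canonical map $\eta_X(x)=\overline{\{x\}}$, injectivity is just $T_0$-ness, and order preservation in both directions reduces to the familiar $x\leq y\iff\overline{\{x\}}\subseteq\overline{\{y\}}$. To finish it suffices to verify $\eta_X^{-1}(\Diamond_{\G}U)=U$ (continuity) and $\eta_X(U)=\eta_X(X)\cap\Diamond_{\G}U$ (openness onto the image) for every $U\in\op(X)$; both reduce to the identity $x\in U\iff\overline{\{x\}}\cap U\neq\emptyset$, which holds because $y\in\overline{\{x\}}\cap U$ forces $x\in U$ via the very definition of the specialization order.

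The only real subtlety is keeping straight where irreducibility and where closedness of the members of $\G$ are used. Irreducibility is what makes the diamond-operator preserve binary intersections in part (1); without it, $\Diamond_{\G}U\cap\Diamond_{\G}V$ need not equal $\Diamond_{\G}(U\cap V)$, and the purported open sets would only form a subbase. Closedness of the members of $\G\subseteq\Irr(X)$ is what makes $X\setminus G$ open and thereby lets the specialization preorder collapse to set inclusion in part (2). Once these two roles are separated, the remainder is routine bookkeeping with the defining identities $\Diamond_{\G}(\,\cdot\,)$ and $\Box_{\G}(\,\cdot\,)$.
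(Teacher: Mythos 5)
The paper does not prove this lemma --- it is quoted as a known preliminary from Goubault-Larrecq's book --- so there is no internal argument to compare against. Your direct verification is correct and is the standard one: irreducibility of the members of $\G$ gives $\Diamond_{\G}U\cap\Diamond_{\G}V=\Diamond_{\G}(U\cap V)$ so the diamonds already form a topology, complementation gives the $\Box_{\G}A$ description of closed sets, and closedness of the members of $\G$ collapses the specialization preorder to inclusion, after which the embedding claims for $\eta_X$ reduce to $x\in U\iff\overline{\{x\}}\cap U\neq\emptyset$. Your closing remark correctly isolates where each hypothesis is used (the appeal to Lemma~\ref{irrset} in part (1) is unnecessary, since the definition of irreducibility already quantifies over closed sets of $X$); note only that part (2) as stated does not repeat the hypothesis $\G\subseteq\Irr(X)$, but in every application in the paper one has $\si(X)\subseteq\G\subseteq\Irr(X)$, so your use of closedness there is legitimate.
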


	\begin{lemma}\label{closure}
		For a topological space $X$, let $Y$ be a subspace of $X$ and $A$ closed in $Y$. Then $A=\cl_X(A) \cap Y$.
	\end{lemma}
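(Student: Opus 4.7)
The statement is a standard elementary fact about subspace topology, so the plan is just to unfold the definitions carefully and verify both inclusions.

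The plan is to start from the definition of the subspace topology on $Y$: since $A$ is closed in $Y$, there exists a closed subset $B$ of $X$ with $A=B\cap Y$. From this one obtains the two inclusions separately. For $\cl_X(A)\cap Y\subseteq A$, I would argue that $A\subseteq B$ and $B$ is closed in $X$ imply $\cl_X(A)\subseteq B$, and hence $\cl_X(A)\cap Y\subseteq B\cap Y=A$. For the reverse inclusion $A\subseteq \cl_X(A)\cap Y$, the point is simply that $A\subseteq Y$ by hypothesis and $A\subseteq \cl_X(A)$ always, so the intersection contains $A$.

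There is no real obstacle here; the only thing to be careful about is to pick the closed set $B$ in $X$ witnessing that $A$ is relatively closed in $Y$, rather than trying to use $\cl_X(A)$ itself as the witness (which would be circular, since a priori $\cl_X(A)$ need not satisfy $\cl_X(A)\cap Y=A$). Once $B$ is fixed, both inclusions are one-line checks.

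I would present this as a short two-inclusion proof of roughly four or five lines, without introducing any additional machinery. Nothing deeper than the definition of the subspace topology and the fact that $\cl_X(A)$ is the smallest closed subset of $X$ containing $A$ is needed, and no earlier lemma from the excerpt is invoked.
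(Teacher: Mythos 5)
Your proof is correct: the standard two-inclusion argument via a closed witness $B\subseteq X$ with $A=B\cap Y$ is exactly the right (and essentially the only) way to prove this, and your caution about not circularly using $\cl_X(A)$ as the witness is well placed. The paper states this lemma without proof as a standard fact about the subspace topology, so there is nothing to compare against; your argument fills the gap correctly.
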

	\begin{lemma}\label{inject}
		Let $X$, $Y$ be two topological spaces and $f:X\longrightarrow Y$ an injective map. Then
		\begin{enumerate}[(1)]
			\item $f(\bigcap_{i\in I}A_i)=\bigcap_{i\in I} f(A_i)$ and $f(\bigcup_{i\in I}A_i)=\bigcup_{i\in I} f(A_i)$ for $\{A_i\}_{i\in I}\subseteq 2^X$;
			\item $f^{-1}(f(A))=A$ for $A\subseteq X$.
		\end{enumerate}
	\end{lemma}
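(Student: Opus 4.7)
The plan is to verify both parts by standard element-chasing, noting carefully at each step where injectivity is actually used. The statement itself is purely set-theoretic, so the topological structure on $X$ and $Y$ plays no role; the proof reduces to four mutual-inclusion arguments plus the observation that two of the four inclusions hold for arbitrary maps and two genuinely require injectivity.

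For part (1), I would first dispatch the union identity $f(\bigcup_{i\in I}A_i)=\bigcup_{i\in I}f(A_i)$, which holds without any injectivity hypothesis: if $y=f(x)$ with $x\in A_{i_0}$ then $y\in f(A_{i_0})\subseteq \bigcup_i f(A_i)$, and conversely if $y\in f(A_{i_0})$ then $y=f(x)$ for some $x\in A_{i_0}\subseteq \bigcup_i A_i$. Then for intersections, the forward inclusion $f(\bigcap_i A_i)\subseteq \bigcap_i f(A_i)$ is again automatic. The reverse inclusion is the only place injectivity enters: given $y\in \bigcap_i f(A_i)$, for each $i$ pick $x_i\in A_i$ with $f(x_i)=y$; since $f$ is injective, all the $x_i$ coincide with a single $x$, which then lies in every $A_i$, hence in $\bigcap_i A_i$, giving $y\in f(\bigcap_i A_i)$.

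For part (2), the inclusion $A\subseteq f^{-1}(f(A))$ is true for any function. For the reverse, take $x\in f^{-1}(f(A))$; then $f(x)\in f(A)$, so $f(x)=f(a)$ for some $a\in A$, and injectivity forces $x=a\in A$. This gives $f^{-1}(f(A))\subseteq A$, completing the proof.

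Since the whole lemma is folklore, I would not expect any obstacle; the only point worth flagging explicitly in the write-up is that injectivity is used exactly twice (once for the reverse intersection inclusion in (1), once for (2)), which is the content that distinguishes this statement from the properties of an arbitrary map.
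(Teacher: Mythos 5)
Your proof is correct: the paper states Lemma \ref{inject} without proof, treating it as a folklore fact, and your element-chasing argument (with injectivity used exactly where you flag it, namely in the reverse inclusion for intersections and in $f^{-1}(f(A))\subseteq A$) is the standard and complete justification. The only microscopic caveat, which the paper also ignores, is that the intersection identity in (1) presumes a nonempty index set (or a convention for the empty intersection); in all uses in the paper the index set is nonempty, so nothing is lost.
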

	
	Given a poset $P$,  denote by $\Max (P)$ the set of all maximal points of $P$ equipped with the relative Scott topology from $P$.
	We write $Q(P)$ (resp., $\Irr(P)$, $\WD(P)$, $\KF(P)$) as short for $Q(\Sigma P)$  (resp., $\Irr(\Sigma P)$, $\WD(\Sigma P)$, $\KF(\Sigma P)$).
	
	\begin{definition}
		A poset $P$ is called a \emph{poset model} of a topological space $X$ if  $\Max(P)$ is homeomorphic to $X$.
	\end{definition}
	
	\begin{lemma}(\cite{zhao09})\label{posetmodel}
		For a $T_1$ space $X$, let $\X=\{ \mathcal{F}: \mathcal{F} $ is  a  filter  in $ \op(X)$ and $ \bigcap\mathcal{F}\neq\emptyset\}$.
		Then $(\X, \subseteq)$ is a bounded complete algebraic poset model of $X$.
	\end{lemma}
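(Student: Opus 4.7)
The plan is to verify the three claims in sequence: that $(\X,\subseteq)$ is bounded complete, that it is algebraic, and that its maximal-point space is $X$. First I would check that $\X$ is a poset under set-inclusion (which is automatic), and then handle bounded completeness: given a family $\{\mathcal{F}_i\}_{i\in I}\subseteq \X$ that has some upper bound $\mathcal{G}\in\X$, the filter $\mathcal{F}=\bigcup\{\mathcal{F}_{i_1}\cap\cdots\cap\mathcal{F}_{i_n}\colon n\in\mathbb{N},\, i_k\in I\}$ generated by $\bigcup_i\mathcal{F}_i$ in $\op(X)$ is a filter with $\mathcal{F}\subseteq\mathcal{G}$, hence $\bigcap\mathcal{F}\supseteq\bigcap\mathcal{G}\neq\emptyset$, so $\mathcal{F}\in\X$ is the least upper bound.

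Next I would identify the compact elements. For each nonempty $U\in\op(X)$, set $\ua U=\{V\in\op(X)\colon U\subseteq V\}$; this is a principal filter in $\op(X)$ containing $U\neq\emptyset$ in its intersection, so $\ua U\in\X$. These are Scott-compact: if $\ua U\subseteq \sup_i \mathcal{F}_i$ for a directed family in $\X$, then $U$ lies in the directed union $\bigcup_i\mathcal{F}_i$ (which here coincides with the filter-union, since a directed union of filters is a filter), so $U\in\mathcal{F}_i$, i.e.\ $\ua U\subseteq\mathcal{F}_i$ for some $i$. For algebraicity I would note that for any $\mathcal{F}\in\X$, the family $\{\ua U\colon U\in\mathcal{F}\}$ is directed (using that $\mathcal{F}$ is a filter) and its directed union equals $\mathcal{F}$.

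The most delicate step, and the only place the $T_1$ hypothesis is used essentially, is identifying $\Max(\X)$ and the homeomorphism with $X$. For $x\in X$ define $\mathcal{N}_x=\{U\in\op(X)\colon x\in U\}$; clearly $\mathcal{N}_x\in\X$ since $x\in\bigcap\mathcal{N}_x$. If $\mathcal{F}\in\X$ satisfies $\mathcal{N}_x\subseteq\mathcal{F}$, then using $T_1$-ness one shows $\bigcap\mathcal{N}_x=\{x\}$ (because $X\setminus\{y\}\in\mathcal{N}_x$ for every $y\neq x$), so $\emptyset\neq\bigcap\mathcal{F}\subseteq\bigcap\mathcal{N}_x=\{x\}$, forcing $x\in U$ for all $U\in\mathcal{F}$, i.e.\ $\mathcal{F}\subseteq\mathcal{N}_x$; hence $\mathcal{N}_x$ is maximal. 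Conversely, any $\mathcal{F}\in\X$ picks some $x\in\bigcap\mathcal{F}$ and the same argument yields $\mathcal{F}\subseteq\mathcal{N}_x$, so every element lies below some $\mathcal{N}_x$. The map $\eta\colon X\to\Max(\X)$, $\eta(x)=\mathcal{N}_x$, is injective (in $T_1$, distinct points are separated by open sets, so $\mathcal{N}_x\neq\mathcal{N}_y$ for $x\neq y$) and surjective by the previous.

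Finally, I would transport the Scott topology: since $\X$ is algebraic with compact basis $\{\ua U\colon U\in\op(X)\}$, the sets $\ua\ua U=\{\mathcal{F}\in\X\colon U\in\mathcal{F}\}$ form a basis of $\sigma(\X)$, and $\eta^{-1}(\ua\ua U\cap\Max(\X))=\{x\in X\colon x\in U\}=U$, while $\eta(U)=\ua\ua U\cap\Max(\X)$; this gives a bijection between $\op(X)$ and the relative Scott opens on $\Max(\X)$, making $\eta$ a homeomorphism. The main obstacle is pinning down $\Max(\X)$ precisely, and it is exactly here that $T_1$-ness is indispensable because without it the intersection $\bigcap\mathcal{N}_x$ could be larger than $\{x\}$ and maximality would fail.
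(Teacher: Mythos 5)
The paper gives no proof of this lemma---it is quoted verbatim from Zhao's 2009 construction---and your argument is precisely the standard one: least upper bounds via the filter generated by a bounded family, principal filters $\ua U$ as the compact elements, neighbourhood filters $\mathcal{N}_x$ as the maximal points (with $T_1$ used exactly where you say, to force $\bigcap\mathcal{N}_x=\{x\}$), and the basic Scott opens $\ua\ua U$ matching the opens of $X$. The only blemish is notational: in the bounded-completeness step the expression $\mathcal{F}_{i_1}\cap\cdots\cap\mathcal{F}_{i_n}$ should denote the set of elementwise intersections $U_1\cap\cdots\cap U_n$ with $U_k\in\mathcal{F}_{i_k}$ (together with upward closure), not the set-theoretic intersection of the filters, and you should note that the least element $\{X\}$ handles the empty family; with that read correctly the proof is sound.
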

	
	\begin{lemma}(\cite{zhao18})\label{dcpomodel}
		For a bounded complete algebraic poset $P$, define $\hat{P}$ as
		$$\hat{P}=\{(x,e) : x \in P, e \in \Max (P)~ \mathrm{and}~ x \leqslant_{P} e\}.$$
		And $(x,e) \leqslant (y,d)$ in $\hat{P}$ iff either $e=d$ and $x\leqslant_{P} y$, or $y=d$ and $x \leqslant_{P} d$.
		Then $\hat{P}$ is a dcpo and  the following properties hold:
		\begin{enumerate}[(1)]
			\item $\Max (\hat{P}) =\{(e,e) : e \in \Max (P) \}$ is homeomorphic to $\Max (P)$;
			\item for a directed subset $D$ of $\hat{P}$, either $D$ has a maximal element,
			or $D= \{ (x_i,e): i \in I\}$ for some $e \in \Max (P)$ and $\{x_i : i \in I\}$ is directed in $P$;
		\end{enumerate}
	\end{lemma}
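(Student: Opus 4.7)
The plan is to verify the assertions in sequence: first that $\pp$ is a well-defined poset with $\Max(\pp)=\{(e,e):e\in\Max(P)\}$, then property (2) describing directed subsets, and finally use (2) to derive both the dcpo property and the homeomorphism in (1).

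Reflexivity, antisymmetry and transitivity of the order on $\pp$ follow from a straightforward case analysis on the two clauses of the definition. For maximality of $(e,e)$ with $e\in\Max(P)$: if $(e,e)\leqslant (y,d)$, the first clause forces $d=e$ and $e\leqslant y$, while $y\leqslant d=e$ is built into $\pp$, giving $y=e$; the second clause forces $y=d$ and $e\leqslant d$, so $e=d=y$ by maximality of $e$ in $P$. Conversely, any $(x,e)\in\pp$ with $x<e$ is strictly dominated by $(e,e)$, so the maximal elements of $\pp$ are exactly the diagonal pairs. For property (2), suppose $D$ is directed without a maximal element. If $(x,e),(y,d)\in D$ with $e\neq d$, a common upper bound $(z,f)\in D$ cannot satisfy the first clause for both inequalities (that would force $e=f=d$); so at least one uses the second clause, forcing $z=f$ and making $(f,f)$ maximal in $\pp$, hence in $D$ --- contradicting the assumption. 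Thus all elements of $D$ have the form $(x_i,e)$ for a fixed $e\in\Max(P)$, and the first clause then shows $\{x_i\}$ is directed in $P$.

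Property (2) delivers the dcpo property painlessly: if $D$ has a maximal element, that is its supremum; otherwise $\{x_i\}$ is directed and bounded above by $e$, so bounded completeness of $P$ yields $x^{*}:=\sup x_i\leqslant e$, and a case split on the two clauses confirms that $(x^{*},e)$ is the supremum of $D$ in $\pp$. For (1), the bijection $\varphi:(e,e)\mapsto e$ between $\Max(\pp)$ and $\Max(P)$ is continuous because the projection $\pi:\pp\to P$, $(x,e)\mapsto x$, is Scott-continuous --- monotonicity is immediate from both clauses, preservation of directed suprema uses property (2) --- and $\pi$ restricts to $\varphi$ on the diagonal.

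For continuity of $\varphi^{-1}$, let $V\subseteq\pp$ be Scott-open and $(e,e)\in V$. Here I exploit the algebraicity of $P$: writing $e$ as the directed supremum of its compact approximants in $P$, the family $\{(k,e):k\text{ compact in }P,\ k\leqslant e\}$ is directed in $\pp$ with supremum $(e,e)\in V$, so some $(k,e)\in V$. For any $e'\in\Max(P)$ with $k\leqslant e'$, the second clause gives $(k,e)\leqslant (e',e')$, so $(e',e')\in V$ by $V$ being upper. Thus $\ua_{P}k\cap\Max(P)\subseteq\{e'\in\Max(P):(e',e')\in V\}$, and $\ua_{P}k$ is Scott-open in $P$ since $k$ is compact. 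Taking the union of these $\ua_{P}k$ over all $(e,e)\in V$ produces a Scott-open $U\subseteq P$ with $U\cap\Max(P)=\{e':(e',e')\in V\}$. The step I expect to demand the most care is precisely this converse direction of (1): without algebraicity of $P$ one could not descend from the diagonal witness $(e,e)$ to a compact witness $(k,e)$, nor exploit the principal filter $\ua_{P}k$ to transport topological information back to $P$.
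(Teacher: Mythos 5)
The paper does not prove this lemma; it is imported verbatim from Zhao--Xi \cite{zhao18}, so there is no in-paper argument to compare against. Your reconstruction is correct and follows the standard route of the cited source: the case analysis identifying $\Max(\hat P)$ with the diagonal, the observation that a directed set with two distinct second coordinates must contain a diagonal (hence maximal) element, the computation of suprema $(\sup x_i,e)$ via bounded completeness, and --- the only genuinely nontrivial step --- openness of $(e,e)\mapsto e$, which you obtain by descending to a compact $k\leqslant e$ with $(k,e)\in V$ and then using the second clause of the order to push $(k,e)$ below $(e',e')$ for \emph{every} maximal $e'\geqslant k$, so that $\ua_P k\cap\Max(P)$ witnesses openness. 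That last point is exactly where algebraicity is needed, and you have identified it correctly; the remaining verifications you leave as ``straightforward case analysis'' (transitivity, antisymmetry, and the fact that a maximal element of a directed set is its greatest element and hence its supremum) are indeed routine.
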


	\begin{lemma}(\cite{zhao18})\label{dcpomodel2}
		For any $T_1$ space $X$, there is a bounded complete algebraic poset $P$ such that $X$ is homeomorphic to $\Max(\pp)$.
		Thus $\pp$ is a dcpo model of $X$.
	\end{lemma}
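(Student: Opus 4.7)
The plan is to stitch together the two preceding lemmas, since the proposition is essentially their composition. Given a $T_1$ space $X$, Lemma~\ref{posetmodel} supplies the poset $\X$ of nonempty intersection filters on $\op(X)$, ordered by inclusion, and asserts that it is bounded complete and algebraic with $\Max(\X)$ homeomorphic to $X$ in the relative Scott topology. The first step is therefore simply to set $P:=\X$.

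Because $P$ is bounded complete and algebraic, the hypothesis of Lemma~\ref{dcpomodel} is met, so the construction there produces the dcpo
$\pp=\{(x,e):x\in P,\ e\in\Max(P),\ x\leqslant_{P} e\}$
with the stated order. Clause~(1) of Lemma~\ref{dcpomodel} identifies $\Max(\pp)=\{(e,e):e\in\Max(P)\}$ with $\Max(P)$ as topological spaces via the projection $(e,e)\mapsto e$. Composing this with the homeomorphism $\Max(\X)\cong X$ from Lemma~\ref{posetmodel} yields $\Max(\pp)\cong X$, which is exactly the defining property of $\pp$ being a dcpo model of $X$.

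There is essentially no obstacle. The substantive content---that $\X$ is bounded complete algebraic with $\Max(\X)\cong X$, and that the relative Scott topology on $\Max(\pp)$ agrees via projection with that of $\Max(P)$---has already been absorbed into the two prior lemmas. The only mild point to keep track of when writing this out formally is that ``$\Max(\cdot)$'' always denotes maximal points equipped with the relative Scott topology of the ambient poset, so the two identifications being composed are genuine homeomorphisms and not merely bijections of underlying sets.
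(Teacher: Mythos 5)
Your argument is correct and is exactly the intended one: the paper states this lemma as an immediate consequence of Lemma~\ref{posetmodel} (take $P=\X$) and Lemma~\ref{dcpomodel}(1), citing \cite{zhao18} without further proof. Your composition of the two homeomorphisms, with the remark that both are homeomorphisms for the relative Scott topologies and not just bijections, is precisely what is needed.
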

	
	We call $\pp$ the \emph{Xi-Zhao model} of a $T_1$ space $X$ if $P$ is a bounded complete algebraic poset model of $X$. In particular, $\Z$ is a special Xi-Zhao  model of $X$.
	
	Let $P$ be a  poset and $A$ an upper set.
	Denote $$E_A=\{(e,e)\in \Max(\pp):{\rm~there~ exists ~a~point~}(x,e) {\rm~in~} A\backslash \Max(\pp)\}.$$
	Obviously, $E_A \subseteq  \Max(\pp)$, and if $A_1$, $A_2$ are two upper subsets of $P$, then $E_{A_1}\subseteq E_{ A_2}$ holds for $A_1\subseteq A_2$.
	\begin{lemma}(\cite{xi17})\label{compactset}
		Let $P$ be a bounded complete algebraic poset and  $K\in Q(\pp)$. Then   $E_K$ is finite.
	\end{lemma}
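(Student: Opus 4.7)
The plan is a proof by contradiction. I assume $E_K$ is infinite and extract a countable family of distinct maximal points $(e_n,e_n)\in E_K$, $n\in\mathbb{N}$, together with witnesses $(x_n,e_n)\in K$ satisfying $x_n<e_n$ for each $n$. I will manufacture an ascending chain of Scott open sets whose union is $\hat{P}$ but no single member contains $K$, which contradicts compactness of $K$.

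The first step exploits a structural feature of the Xi-Zhao order: because $x_n<e_n$, inspecting the two clauses of $\leq$ on $\hat{P}$ shows that $(y,d)\leq(x_n,e_n)$ forces $d=e_n$ and $y\leq x_n$ (the alternative clause would demand $x_n=e_n$). Hence $A_n:=\da(x_n,e_n)=\{(y,e_n):y\leq x_n\}$ is a Scott closed set confined to the branch over $e_n$.

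Next I would prove that $C_n:=\bigcup_{m\geq n}A_m$ is again Scott closed. It is a lower set as a union of lower sets, and for closure under directed suprema I invoke Lemma \ref{dcpomodel}(2): a directed $D\subseteq C_n$ either has a maximum (done) or has the form $\{(y_i,f)\}_{i\in I}$ inside a single branch over some $f\in\Max(P)$. Since the $e_m$'s are distinct, at most one $m\geq n$ satisfies $e_m=f$, forcing all $(y_i,f)$ into that single $A_m$, so $y_i\leq x_m$ for every $i$. Bounded completeness of $P$ then yields $\sup D=(\sup_i y_i,f)\in A_m\subseteq C_n$. Now set $V_n:=\hat{P}\setminus C_n$; these form an ascending chain of Scott open sets. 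Each point of $\hat{P}$ has a unique second coordinate matching at most one $e_m$, so $\bigcap_n C_n=\emptyset$ and $\bigcup_n V_n=\hat{P}\supseteq K$. Compactness of $K$ yields $K\subseteq V_N$ for some $N$, but $(x_N,e_N)\in K\cap A_N\subseteq K\cap C_N$, a contradiction.

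The main obstacle is the design of the open cover: the naive candidates $\hat{P}\setminus A_n$ considered individually trivially cover $\hat{P}$ with just two members, because the sets $A_m$ lie in pairwise disjoint branches. The fix is to bundle branch data into the nested unions $C_n$ and to use the dichotomy of Lemma \ref{dcpomodel}(2) to ensure that each $C_n$ remains Scott closed.
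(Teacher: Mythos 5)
Your proof is correct and is essentially the argument the paper uses: although the paper only cites \cite{xi17} for this lemma, its proof of the generalization in Lemma \ref{compact} builds exactly your sets, since $\da\{(x_k,e_k):k\geqslant n\}=\bigcup_{m\geqslant n}\da(x_m,e_m)=C_n$, and derives the same contradiction with compactness (phrased there as the filtered family of closed sets $D_n$ each meeting $K$ yet having empty intersection, which is the closed-set dual of your ascending open cover $V_n$). Your extra care in verifying that each $C_n$ is Scott closed via Lemma \ref{dcpomodel}(2) and bounded completeness fills in a step the paper leaves implicit.
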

	
	%

	\section{WD Spaces}
In this section, we consider the  ``WD spaces", lying between $T_0$ spaces and sober spaces and being different from well-filtered
spaces. We show that a $T_1$ space $X$  is a WD space if and only if its Xi-Zhao  model  is a WD space.

First, we introduce the concept and  some basic properties of WD spaces.

		A topological space $X$ is called \emph{sober} if for any  irreducible closed subset $A$, there exists a unique point $x \in X$ such that $A=\overline{\{x\}}$, that is, $\Irr(X)=\si(X)$. 	A topological space $X$ is called \emph{well-filtered} if for any filtered family $\K\subseteq Q(X)$ and any open set $U$, $\bigcap\K\subseteq U$ implies $K\subseteq U$ for some $K\in \K$.

	A  KF-set is also called a \emph{Rudin set} in \cite{xu_first-countability_2021}, in order to emphasize its origin from topological variant of Rudin's Lemma (i.e., Lemma \ref{rudinlemma}).	
	\begin{definition}(\cite{xu_t0_2020,xu_first-countability_2021})
		A $T_0$ space $X$ is called a \emph{Rudin space} if every irreducible closed set is a Rudin set, that is, $\KF(X)=\Irr(X)$.
	\end{definition}
	\begin{definition}(\cite{xu_t0_2020})
		A nonempty subset $A$ of a $T_0$  space $X$ is said to be a \emph{WD set} if for any continuous mapping $f: X\longrightarrow Y$ to a well-filtered space $Y$, there exists a unique $y_A\in Y$ such that $\overline{f(A)}=\overline{\{y_A\}}$.
	\end{definition}
	Similarly, $A$ is a WD set iff $\overline{A}$ is a WD set. Denote by $\wds(X)$ (resp., $\WD(X)$) the set of all  WD subsets (resp., closed WD subsets) of $X$.
	\begin{definition}(\cite{xu_t0_2020})
		A $T_0$ space $X$ is called a \emph{WD space} if every irreducible closed set is a WD set, that is, $\WD(X)=\Irr(X)$.
	\end{definition}	
	\begin{lemma}(\cite{xu_t0_2020})\label{map2}
		Let $X$, $Y$ be two $T_0$ spaces and $f: X\longrightarrow Y$ a continuous mapping.  If $A\in \wds(X)$, then $f(A)\in \wds(Y)$.
	\end{lemma}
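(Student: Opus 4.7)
The plan is to prove this by direct composition, mirroring the standard argument for the analogous fact about irreducible sets in Lemma~\ref{map}(1). Given any continuous mapping $h:Y\longrightarrow Z$ into a well-filtered space $Z$, I would form the composite $h\circ f:X\longrightarrow Z$, which is continuous, and whose target is still well-filtered. Since $A\in\wds(X)$, the definition supplies a unique point $z_A\in Z$ with $\overline{(h\circ f)(A)}=\overline{\{z_A\}}$.

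Next, I would observe that $(h\circ f)(A)=h(f(A))$, so the identity above reads $\overline{h(f(A))}=\overline{\{z_A\}}$. This is exactly the condition that $f(A)$, as a subset of the $T_0$ space $Y$, witnesses against the arbitrary continuous map $h$ into a well-filtered space. Nonemptiness of $f(A)$ is immediate from nonemptiness of $A$, and uniqueness of the witnessing point transfers from uniqueness of $z_A$ together with the $T_0$ property of $Z$ (two points with the same closure must coincide). Since $h$ was arbitrary, $f(A)\in\wds(Y)$ by definition.

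There is essentially no obstacle here: the statement is a formal consequence of the definition of WD set, in exact analogy with item (1) of Lemma~\ref{map} for irreducible sets and item (2) for KF-sets. The only minor point worth double-checking is that in the formulation of WD set one is allowed to vary over \emph{all} continuous maps into well-filtered spaces, so that post-composition with $h$ is legitimate — which is indeed how the definition is stated in \cite{xu_t0_2020}.
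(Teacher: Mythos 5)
Your argument is correct and is the standard one: the paper states this lemma as a citation from \cite{xu_t0_2020} without reproving it, and the intended proof is exactly your composition argument, namely that any continuous $h:Y\longrightarrow Z$ into a well-filtered space yields the continuous composite $h\circ f$, to which the definition of WD set applies with $(h\circ f)(A)=h(f(A))$. Nothing is missing.
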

	\begin{lemma}(\cite{xu_t0_2020})\label{subset}
		Let $X$ be a $T_0$ space. Then $\si(X)\subseteq \KF(X)\subseteq \WD(X)\subseteq\Irr(X)$.
	\end{lemma}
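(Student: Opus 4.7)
My plan is to prove the three inclusions $\si(X)\subseteq\KF(X)\subseteq\WD(X)\subseteq\Irr(X)$ in order.

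For $\si(X)\subseteq\KF(X)$, I would fix $x\in X$ and take the singleton filtered family $\K=\{\ua x\}$, where $\ua x$ is the specialization-order upper set of $x$. This $\ua x$ equals the intersection of all open neighbourhoods of $x$, so it is saturated; it is also compact, because any open cover of $\ua x$ must contain a set through $x$, which, being an upper set, already covers $\ua x$. Since $x\in\overline{\{x\}}\cap\ua x$, $\overline{\{x\}}$ meets $\K$, and minimality is immediate: any closed set meeting $\ua x$ at some $y\geq x$ must contain $\overline{\{y\}}\supseteq\overline{\{x\}}$, so no proper closed subset of $\overline{\{x\}}$ can meet $\ua x$.

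For $\KF(X)\subseteq\WD(X)$, given $A\in\KF(X)$ and a continuous $f\colon X\to Y$ into a well-filtered space $Y$, I would first push $A$ forward via Lemma \ref{map}(2) to get $\overline{f(A)}\in\KF(Y)$, and then appeal to the characterization (from \cite{shen_well-filtered_2019}) that in a well-filtered $T_0$ space every closed KF-set equals the closure of a (necessarily unique, by $T_0$) point. This supplies the required $y_A\in Y$ with $\overline{f(A)}=\overline{\{y_A\}}$. This step is the main obstacle of the proof, because it is where the topological strength of well-filteredness is really used; the other two inclusions essentially reduce to this single structural fact.

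For $\WD(X)\subseteq\Irr(X)$, I would test an $A\in\WD(X)$ against the canonical embedding $\eta_X\colon X\to X^s=P_H(\Irr(X))$ into the sobrification, which is sober and hence well-filtered. The WD hypothesis yields $\overline{\eta_X(A)}=\overline{\{y_A\}}$, which is irreducible in $X^s$. If $A=B\cup C$ for proper closed $B,C\subseteq X$, then $\overline{\eta_X(A)}=\overline{\eta_X(B)}\cup\overline{\eta_X(C)}$ forces, say, $\overline{\eta_X(A)}=\overline{\eta_X(B)}$; the identity $\overline{\eta_X(B)}\cap\eta_X(X)=\eta_X(B)$, which follows from Lemma \ref{embedding}(2) (closed sets of $X^s$ are of the form $\Box_{\Irr(X)}B'$ for $B'$ closed in $X$), together with the injectivity of $\eta_X$ (Lemma \ref{inject}), then yields $A\subseteq B$, contradicting properness. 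Thus $A$ is irreducible.
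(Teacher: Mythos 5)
The paper does not prove this lemma; it is imported verbatim from \cite{xu_t0_2020}, so there is no internal proof to compare against. Your reconstruction is correct and follows the standard route of that reference: the singleton filtered family $\{\ua x\}$ (compact saturated, with minimality forced by $y\in\ua x\Rightarrow\overline{\{x\}}\subseteq\overline{\{y\}}$) for $\si(X)\subseteq\KF(X)$; Lemma~\ref{map}(2) together with the equivalence ``well-filtered $\Leftrightarrow\KF=\si$'' of Lemma~\ref{wf} for $\KF(X)\subseteq\WD(X)$ (no circularity, since that equivalence is proved directly from Rudin's Lemma in \cite{shen_well-filtered_2019}); and testing a WD set against the sobrification $\eta_X\colon X\to P_H(\Irr(X))$, using $\overline{\eta_X(B)}\cap\eta_X(X)=\eta_X(B)$ for closed $B$, for $\WD(X)\subseteq\Irr(X)$.
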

It follows that every Rudin space is a  WD space.
\begin{lemma}(\cite{shen_well-filtered_2019,xu_t0_2020})\label{wf}
		For a $T_0$ space $X$, the following conditions are equivalent:
		\begin{enumerate}[(1)]
			\item $X$ is well-filtered;
			\item $\KF(X)=\si(X)$;
			\item $\WD(X)=\si(X)$.
		\end{enumerate}
	\end{lemma}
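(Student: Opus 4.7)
I will establish the equivalences via the cycle $(1) \Rightarrow (3) \Rightarrow (2) \Rightarrow (1)$, leaning on the chain of inclusions $\si(X) \subseteq \KF(X) \subseteq \WD(X) \subseteq \Irr(X)$ from Lemma \ref{subset} together with Rudin's Lemma (Lemma \ref{rudinlemma}).

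For $(1) \Rightarrow (3)$ I would apply the defining property of WD sets to the identity map $\mathrm{id}_X \colon X \to X$: since the codomain is well-filtered by hypothesis, every $A \in \WD(X)$ satisfies $\overline{A} = \overline{\mathrm{id}_X(A)} = \overline{\{x\}}$ for a unique $x \in X$, so $A \in \si(X)$; the reverse inclusion $\si(X) \subseteq \WD(X)$ is already in Lemma \ref{subset}. The implication $(3) \Rightarrow (2)$ is then immediate, because $\si(X) \subseteq \KF(X) \subseteq \WD(X)$ forces $\KF(X) = \si(X)$ as soon as the two outer sets coincide.

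The real work is $(2) \Rightarrow (1)$, which I would prove by contradiction. Given filtered $\mathcal{K} \subseteq Q(X)$ and open $U$ with $\bigcap \mathcal{K} \subseteq U$ but no single $K \in \mathcal{K}$ sitting inside $U$, the closed set $C = X \setminus U$ meets every member of $\mathcal{K}$. A Zorn argument, applied to the closed subsets of $C$ that meet every $K \in \mathcal{K}$ ordered by reverse inclusion, produces a minimal such set $A$; the key step enabling Zorn is that, for any descending chain $\{A_i\}$ and any $K \in \mathcal{K}$, the sets $A_i \cap K$ form a chain of nonempty closed subsets of the compact $K$ and hence have nonempty intersection, so $\bigcap_i A_i$ still meets every $K$. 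Rudin's Lemma applied inside $A$ then yields an irreducible closed subset meeting every $K$, which by minimality must coincide with $A$; thus $A \in \Irr(X)$ and $A$ is a closed KF-set witnessed by $\mathcal{K}$. Hypothesis $(2)$ forces $A = \overline{\{x\}}$ for some $x \in X$, and this $x$ lies in every $K \in \mathcal{K}$, so $x \in \bigcap \mathcal{K} \subseteq U$, contradicting $x \in A \subseteq X \setminus U$. The main obstacle lies precisely here: one must combine the Zorn step (relying on compactness of the $K$'s to keep descending intersections nonempty) with Rudin's Lemma to upgrade minimality to irreducibility, so that the resulting set genuinely qualifies as a closed KF-set and hypothesis $(2)$ becomes applicable.
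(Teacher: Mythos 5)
The paper states Lemma \ref{wf} only as a cited result from \cite{shen_well-filtered_2019,xu_t0_2020} and supplies no proof of its own, so there is no internal argument to compare against. Your proof is correct and follows the standard route of those references: the identity-map trick for $(1)\Rightarrow(3)$, the inclusion chain of Lemma \ref{subset} for $(3)\Rightarrow(2)$, and, for $(2)\Rightarrow(1)$, a Zorn argument (using compactness of each $K$ to keep descending intersections nonempty) to extract a minimal closed set meeting all members of $\mathcal{K}$, upgraded to an irreducible one via Lemma \ref{rudinlemma} so that it genuinely is a closed KF-set. The only step you leave implicit is that $A=\overline{\{x\}}$ meeting a compact \emph{saturated} $K$ forces $x\in K$ (some $y\in A\cap K$ satisfies $y\leqslant x$ in the specialization order and $K$ is an upper set), which is routine but worth a line.
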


From \cite{chen22},   irreducible closed subsets  can be formulated as follows.

\begin{lemma}\label{irr}(\cite{chen22})
		Let $P$ be a bounded complete algebraic poset,  $A\in \Irr(\pp)$ and
		$A^*=A\cap \Max (\pp)$.
		The following statements hold:
		\begin{enumerate}[(1)]
			\item if $A^*=\emptyset$, then there exists $(x,e) \in A\backslash \Max (\pp)$ such that $A=\da (x,e)$;
			\item if $A^*\neq\emptyset$, then $A=\cl_{\hat{P}}(A^*)$ and $A^*$ is irreducible closed in $\Max (\pp)$.
		\end{enumerate}
	\end{lemma}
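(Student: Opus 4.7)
The plan is to handle (1) and (2) separately, both exploiting Lemma \ref{dcpomodel}(2)'s description of directed subsets of $\pp$.

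For (1), when $A^*=\emptyset$, the first step is to show that $A$ is confined to a single column $P_e=\{(y,e):y\leqslant_P e\}$. Since no element of $A$ is maximal, option (ii) of the order on $\pp$ is never triggered inside $A$, so the lower cone $\da(y,d)$ of any $(y,d)\in A$ lies within column $P_d$. Setting $A_e=A\cap P_e^\circ$ with $P_e^\circ=P_e\setminus\{(e,e)\}$, Lemma \ref{dcpomodel}(2) shows that each $A_e$, and indeed every union $\bigcup_{e\in S}A_e$, is Scott closed: a directed subset either has a maximum (which is its sup) or is column-confined, and its sup $(\sup x_i,e)$ cannot escape to $(e,e)$ since $A^*=\emptyset$. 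Hence $A=\bigsqcup_{e\in\Max(P)}A_e$ is a disjoint union of closed subsets, and irreducibility forces exactly one $A_e$ to be nonempty. Within that single column, $P_e$ is order-isomorphic to the principal ideal $\da_P e$, a bounded complete algebraic lattice and therefore sober in its Scott topology. The irreducible closed set $A$ inside $P_e$ is thus a principal ideal $\da(x,e)$, and $(e,e)\notin A$ forces $x<_P e$.

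For (2), $A^*\neq\emptyset$, closedness of $A^*$ in $\Max(\pp)$ is immediate from the subspace topology. The core assertion $A=\cl_{\pp}(A^*)$ is proved element by element. For $(y,d)\in A$ with $y=d$ or $(d,d)\in A^*$, one concludes immediately via option (ii) that $(y,d)\in\da(d,d)\subseteq\cl_{\pp}(A^*)$. The delicate case is $(y,d)\in A$ with $y<_P d$ and $(d,d)\notin A$. Suppose for contradiction that $(y,d)\notin\cl_{\pp}(A^*)$. The set $B=A\setminus\cl_{\pp}(A^*)$ is a nonempty upper subset of $A$, and since $\cl_{\pp}(A^*)$ is closed and thereby a lower set stable under directed suprema, any directed sup of a chain in $B$ (taken in the dcpo $\pp$) must lie in $A\setminus\cl_{\pp}(A^*)=B$. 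Zorn's lemma thus produces a maximal element $(y_0,d_0)\in B$. Maximality, combined with option (ii), rules out any $(e,e)\in A^*$ with $y_0\leqslant_P e$ and forces every $(u,d_0)\in A$ with $u>_P y_0$ to sit in $\cl_{\pp}(A^*)$. Consider the Scott open set $V=\{(x,e)\in\pp:x\not\leqslant_P y_0\}$: it contains every $(d',d')\in A^*$ (any $d'\leqslant_P y_0$ would force $d'=y_0$ hence $y_0\in\Max(P)$, contradicting $y_0<_P d_0$) but omits $(y_0,d_0)$. Applying irreducibility of $A$ to $V$ and $\pp\setminus\cl_{\pp}(A^*)$---both nonempty in $A$---produces an element whose analysis via the structural constraints on $(y_0,d_0)$ yields the desired contradiction. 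Hence $B=\emptyset$ and $A=\cl_{\pp}(A^*)$; irreducibility of $A^*$ in $\Max(\pp)$ then follows from Lemma \ref{irrset}.

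The main obstacle is the concluding contradiction in the delicate sub-case of (2). The Zorn-maximal $(y_0,d_0)\in B$ has a highly restricted upper structure dictated by option (ii), but irreducibility of $A$ only supplies an element in $A\cap V$ that need not be order-comparable with $(y_0,d_0)$; extracting the comparison needed to contradict maximality requires a careful combination of the dcpo property of $\pp$ from Lemma \ref{dcpomodel} with the two-part form of the order on $\pp$, and this is the technical heart of the proof.
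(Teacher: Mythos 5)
Your part (2) has a genuine gap at exactly the point you flag. The reduction to $B=A\setminus\cl_{\pp}(A^*)$, the verification that directed suprema of subsets of $B$ stay in $B$ (because $\cl_{\pp}(A^*)$ is a lower set), the Zorn-maximal element $(y_0,d_0)$, and the observation that the Scott-open set $V=\{(x,e):x\not\leqslant_P y_0\}$ contains $A^*$ and omits $(y_0,d_0)$ are all correct. But applying irreducibility of $A$ to $V$ and $\pp\setminus\cl_{\pp}(A^*)$ delivers only some $(z,c)\in B$ with $z\not\leqslant_P y_0$, and such an element is in general order-incomparable with $(y_0,d_0)$; maximality of $(y_0,d_0)$ in $B$ says nothing about incomparable elements, so no contradiction follows. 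I do not see how to close the argument along these lines: when $c\neq d_0$ the pair $\{(y_0,d_0),(z,c)\}$ need not have a supremum in $\pp$ at all, and even in the same column the supremum need not lie in $A$. So the ``technical heart'' you defer is genuinely missing, not merely tedious. Note that the paper quotes this lemma from \cite{chen22} without proof, but its proof of the generalization, Proposition \ref{irr1}(ii), shows the intended route, which avoids Zorn entirely: decompose $A=\bigcup\{A\cap\pp_e:(e,e)\in A^*\}\cup\bigcup\{A\cap\pp_e:(e,e)\notin A^*\}$; the first union lies in $\da A^*\subseteq\cl_{\pp}(A^*)$ because $(x,e)\leqslant(e,e)$ by the second clause of the order, while the second union $C$ is Scott closed and disjoint from $\Max(\pp)$ by Lemma \ref{scottclosed}. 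Then $A\subseteq\cl_{\pp}(A^*)\cup C$ with both sets closed, $A\not\subseteq C$ since $A^*\neq\emptyset$, and irreducibility forces $A\subseteq\cl_{\pp}(A^*)$.

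Your part (1) is essentially correct and is a reasonable alternative to arguing via Lemma \ref{scottclosed}: the column decomposition, the closedness of each $\bigcup_{e\in S}A_e$ via Lemma \ref{dcpomodel}(2), and the reduction to a single column are all fine. One step is unargued, however: to conclude that the irreducible closed set $A\subseteq\pp_e$ is a principal ideal by sobriety of the Scott domain $\da_P e$, you need that the subspace topology induced on $\pp_e$ by $\Sigma\pp$ coincides with the Scott topology of the poset $\pp_e$, since irreducibility with respect to a coarser topology does not imply irreducibility with respect to a finer one. This is true (for a nonempty Scott-open $V$ of $\pp_e$ one checks that $U=V\cup(\pp\setminus\pp_e)$ is Scott open in $\pp$ with $U\cap\pp_e=V$), but it must be stated and proved.
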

	By the above result and Lemma \ref{closure}, we can conclude the following equations:
	\begin{corollary}\label{equation1}(\cite{chen22})
		Let $P$ be a bounded complete algebraic poset. Then
		\begin{equation*}\left\{
			\begin{aligned}
				\Irr (\pp) =& \{\cl_{\pp}(A):A\in \Irr (\Max (\pp))\}\cup\{\da (x,e): (x,e)\in \pp\backslash \Max (\pp)\}, \\
				\Irr (\Max (\pp))= & \{A\cap\Max (\pp): A\cap\Max (\pp)\neq\emptyset ~\rm {and}~ A\in \Irr (\pp) \}.
			\end{aligned}
			\right.\end{equation*}
	\end{corollary}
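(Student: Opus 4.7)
The plan is to derive both equations directly from Lemma~\ref{irr} together with Lemma~\ref{closure} and Lemma~\ref{irrset}, handling the two equalities independently by a standard two-inclusion argument.

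For the first equation, I would argue set inclusion in both directions. For $(\subseteq)$, take $B\in\Irr(\pp)$ and set $B^{*}=B\cap\Max(\pp)$. Lemma~\ref{irr} gives exactly two mutually exclusive cases: if $B^{*}=\emptyset$ then $B=\da(x,e)$ for some $(x,e)\in\pp\setminus\Max(\pp)$, placing $B$ in the second piece of the union; if $B^{*}\ne\emptyset$ then $B=\cl_{\pp}(B^{*})$ with $B^{*}\in\Irr(\Max(\pp))$, placing $B$ in the first piece. For $(\supseteq)$, I must check both families consist of irreducible closed subsets of $\pp$. Each $\da(x,e)$ is the closure of the singleton $\{(x,e)\}$ in $\Sigma\pp$ (since the Scott specialisation order is the original order), so it is irreducible closed. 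For $A\in\Irr(\Max(\pp))$, the set $A$ is irreducible in the subspace $\Max(\pp)$, and by Lemma~\ref{irrset}(1)$\Leftrightarrow$(2) also irreducible in $\pp$; then by Lemma~\ref{irrset}(2)$\Leftrightarrow$(3) the set $\cl_{\pp}(A)$ is irreducible closed in $\pp$.

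For the second equation, $(\supseteq)$ is immediate from Lemma~\ref{irr}(2): whenever $A\in\Irr(\pp)$ and $A\cap\Max(\pp)\ne\emptyset$, that intersection is itself irreducible closed in $\Max(\pp)$. For $(\subseteq)$, pick $C\in\Irr(\Max(\pp))$ and set $A=\cl_{\pp}(C)$. By the same invocation of Lemma~\ref{irrset} as above, $A\in\Irr(\pp)$. Lemma~\ref{closure}, applied to the closed subset $C$ of the subspace $\Max(\pp)$, yields $C=\cl_{\pp}(C)\cap\Max(\pp)=A\cap\Max(\pp)$, and $C\ne\emptyset$ ensures $A\cap\Max(\pp)\ne\emptyset$, so $C$ is of the required form.

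Since both halves of the corollary are essentially repackaging of the case-analysis in Lemma~\ref{irr}, I do not anticipate a genuine obstacle; the only place one must be mildly careful is in the $(\supseteq)$ direction of the first equation, where one needs Lemma~\ref{irrset} to transfer irreducibility between $\Max(\pp)$ and $\pp$ and then to take closure without losing it. The entire argument is a short, bookkeeping-style deduction, which is why the authors present it as a corollary rather than a theorem.
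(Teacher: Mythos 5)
Your argument is correct and is exactly the route the paper intends: the corollary is stated as an immediate consequence of Lemma~\ref{irr} together with Lemmas~\ref{irrset} and~\ref{closure}, and your two-inclusion bookkeeping (case analysis via Lemma~\ref{irr}, transfer of irreducibility via Lemma~\ref{irrset}, recovery of $C=\cl_{\pp}(C)\cap\Max(\pp)$ via Lemma~\ref{closure}) fills in precisely the steps the authors leave implicit. No discrepancy with the paper's approach.
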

Naturally, we have the following question.

Are there equation characterizations similar to above for the closed KF-subsets or the closed WD subsets of a $T_1$ space and its Xi-Zhao model?

Next, we will investigate some properties of the irreducible closed subsets and closed KF-subsets of a special $T_0$ space $\Y$ so as to solve the above question, which alos can  be used in the proof of our main result.

Let $P$ be a bounded complete algebraic poset.  A pair $\langle \Y, f\rangle $ consisting of a $T_0$ space $\Y$ and a  mapping $f: \Sigma\pp\longrightarrow \Y$ satisfies the following three conditions:
	\begin{itemize}
		\item[(P1)] $f:\Sigma\pp\longrightarrow \Y$ is a topological embedding ;
		\item[(P2)] $\Y=\ua_{\Y}f(\Max(\pp))\cup f(\pp\backslash \Max (\pp))$ and $f(\pp)$ is a lower subset of $\Y$;
		\item[(P3)] if $C$ is Scott closed in $\pp\backslash \Max (\pp)$, then $f(C)$ is closed in $\Y$.
	\end{itemize}

	\begin{lemma}
		Let  $\Y$ be a $T_0$ space and $P$  a bounded complete algebraic poset. For a pair $\langle \Y, f\rangle $ satisfying (P1)$\sim$(P3), if $A$ is a lower subset of $\pp$, then $f(A)$ is a lower subset of $\Y$.
	\end{lemma}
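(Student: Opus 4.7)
The plan is to pick an arbitrary element $y\in\Y$ with $y\leq_{\Y}f(a)$ for some $a\in A$ and show $y\in f(A)$, using (P2) to split into two cases according to how $y$ sits in $\Y$, and then transferring inequalities back to $\pp$ via the embedding $f$.

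First I would note the standing consequence of (P1): since $f:\Sigma\pp\longrightarrow\Y$ is a topological embedding and the specialization order on a subspace is the restriction of the ambient one, the embedding identifies the Scott order on $\pp$ (which equals the specialization order on $\Sigma\pp$) with the restriction of $\leq_{\Y}$ to $f(\pp)$. Concretely, for all $p,q\in\pp$ one has $p\leq_{\pp}q$ iff $f(p)\leq_{\Y}f(q)$. This is the tool that lets me translate between the two orders.

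Now the argument. By (P2), either $y\in f(\pp\setminus\Max(\pp))$ or $y\in\ua_{\Y}f(\Max(\pp))$.
\emph{Case 1:} $y=f(p)$ for some $p\in\pp\setminus\Max(\pp)$. Then $f(p)\leq_{\Y}f(a)$ forces $p\leq_{\pp}a$ by the above, and since $A$ is a lower set in $\pp$, we obtain $p\in A$ and hence $y\in f(A)$.
\emph{Case 2:} there exists $m\in\Max(\pp)$ with $f(m)\leq_{\Y}y$. Combined with $y\leq_{\Y}f(a)$, this gives $f(m)\leq_{\Y}y\leq_{\Y}f(a)$, and since $f(\pp)$ is a lower subset of $\Y$ by (P2) and $f(a)\in f(\pp)$, we conclude $y\in f(\pp)$, say $y=f(q)$. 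Translating back, $m\leq_{\pp}q\leq_{\pp}a$; maximality of $m$ forces $q=m$, hence $q$ is maximal and $q\leq_{\pp}a$ forces $q=a$. Therefore $y=f(a)\in f(A)$.

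This case analysis is essentially the whole proof; I do not anticipate a serious obstacle, because properties (P1) and (P2) have been tailored exactly to handle the two possibilities for $y$. The only subtle point is the remark in the first paragraph — that specialization order transfers cleanly through the embedding — and it is precisely condition (P3) that is \emph{not} needed here, so I would be careful not to invoke it.
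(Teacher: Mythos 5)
Your proof is correct and follows essentially the same route as the paper's: both reduce to observing that $y\leqslant_{\Y}f(a)$ with $f(\pp)$ a lower set (the second clause of (P2)) forces $y\in f(\pp)$, and then pull the inequality back through the order embedding given by (P1). Your case split on the decomposition $\Y=\ua_{\Y}f(\Max(\pp))\cup f(\pp\backslash\Max(\pp))$ is an unnecessary detour — your Case 2 ends up invoking the same lower-set argument anyway — but it introduces no gap.
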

	\begin{proof}
Assume $(x,e)\in A$, let $y\in \Y$ with $y\leqslant_{\Y}f((x,e))$. Then (P2) implies $y\in f(\pp)=\da_{\Y} f(\pp)$,  which means there exists a point $(x',e')\in\pp$ such that $y=f((x',e'))$.
Thus $f((x',e'))\leqslant_{\Y}f((x,e))$ implies $(x',e')\leqslant (x,e)$ because $f$ is an order embedding under the specialization order.
So $(x',e')\in A$, which leads to $y\in f(A)$. That is, $f(A)$ is a lower subset of $\Y$.
	\end{proof}
	\noindent{\bf Remark.}
	In particularly, $f(\pp)=\da_{\Y}f(\Max(\pp))$ and $f(\pp\backslash \Max (\pp))$ is a lower set.
	Indeed, $\Y$ is divided into two disjoint parts,  a lower set $f(\pp\backslash \Max (\pp))$ and  an upper set $\ua_{\Y}f(\Max(\pp))$.
	
	\begin{lemma}\label{compact}
		Let  $\Y$ be a $T_0$ space and $P$  a bounded complete algebraic poset. Let a pair $\langle \Y, f\rangle $ satisfy (P1)$\sim$(P3). Then for each $\kk\in Q(\Y)$,  the set $K=f^{-1}(\kk)$ is saturated in $\pp$ and $E_K$ is finite.
	\end{lemma}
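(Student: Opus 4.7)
The plan is to prove the two claims separately; the first is immediate, while the second requires a contradiction argument via an open cover of $\kk$.

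\emph{Saturation of $K$.} By (P1), $f\colon \Sigma\pp \to \Y$ is a topological embedding between $T_0$ spaces, hence an order embedding for the specialization orders. If $(x,e) \in K$ and $(x,e) \leqslant (y,d)$ in $\pp$, then $f((y,d)) \geqslant_{\Y} f((x,e)) \in \kk$; saturation of $\kk$ gives $f((y,d)) \in \kk$, whence $(y,d) \in K$.

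\emph{Finiteness of $E_K$.} I would argue by contradiction. Assume $E_K$ is infinite and choose distinct $(e_n,e_n) \in E_K$ with witnesses $(x_n,e_n) \in K \setminus \Max(\pp)$ for $n \geqslant 1$. Using the order on $\pp$ from Lemma \ref{dcpomodel}, the principal lower set $\da(x_n,e_n) = \{(y,e_n) : y \leqslant_P x_n\}$ is Scott closed in $\pp$ and, since $x_n < e_n$, lies entirely inside $\pp \setminus \Max(\pp)$. For each $n$ set
$$C_n \;=\; \bigcup_{m \neq n} \da(x_m,e_m).$$
The decisive observation is that $C_n$ is itself Scott closed: the summands $\da(x_m,e_m)$ are pairwise disjoint (their second coordinates $e_m$ are distinct), and by Lemma \ref{dcpomodel}(2) every directed $D \subseteq C_n$ either has a maximum or has all its elements sharing one second coordinate $e_{m_0}$, forcing $D$ into the single Scott-closed summand $\da(x_{m_0},e_{m_0})$. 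Hence (P3) applies and $O_n := \Y \setminus f(C_n)$ is open in $\Y$.

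Next I would verify that $\{O_n\}_{n \geqslant 1}$ covers $\kk$. For $y \in \kk$ with $y \notin f(\pp)$, $y \in O_n$ for every $n$. For $y = f((z,d)) \in f(\pp)$, the formula for $\da(x_m,e_m)$ together with the injectivity of $f$ shows that $(z,d)$ lies in at most one summand $\da(x_m,e_m)$ (namely the one, if any, whose $e_m$ equals $d$), so $y$ avoids $f(C_n)$ for at least one $n$. Compactness of $\kk$ then yields a finite subcover $O_{n_1},\ldots,O_{n_r}$. Since $E_K$ is infinite we may pick $n_0 \notin \{n_1,\ldots,n_r\}$; but then $(x_{n_0},e_{n_0}) \in \da(x_{n_0},e_{n_0}) \subseteq C_{n_i}$ for every $i$ (because $n_0 \neq n_i$), so $f((x_{n_0},e_{n_0})) \in \kk$ misses every $O_{n_i}$, contradicting the subcover.

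I expect the main obstacle to be verifying that $C_n$ is Scott closed; this rests crucially on the dichotomy of Lemma \ref{dcpomodel}(2) and on the second coordinates $e_m$ being pairwise distinct. Once (P3) produces the closed image $f(C_n)$ in $\Y$, the remaining compactness/cover argument is a routine case analysis using the injectivity of $f$.
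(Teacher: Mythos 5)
Your proof is correct, and it rests on the same two pillars as the paper's: the down-closures of the witnesses $(x_m,e_m)$ are Scott closed subsets of $\pp\backslash\Max(\pp)$ (because the tags $e_m$ are pairwise distinct and Lemma \ref{dcpomodel}(2) traps any directed set without a maximum inside a single summand), and (P3) then turns them into closed subsets of $\Y$ against which compactness of $\kk$ is played. Where you differ is in the packaging of the compactness step: the paper takes the decreasing tails $D_n=\da\{(x_k,e_k):k\geqslant n\}$, notes each $f(D_n)$ meets $\kk$ at $f((x_n,e_n))$, and observes $\bigcap_n f(D_n)=f(\bigcap_n D_n)=\emptyset$, contradicting the finite intersection property; you instead use the co-singleton unions $C_n=\bigcup_{m\neq n}\da(x_m,e_m)$ and argue via an open cover $\{\Y\setminus f(C_n)\}$ with no finite subcover. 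The two are dual invocations of compactness, but the paper's choice of a nested chain makes the emptiness of the intersection immediate, whereas your choice forces the extra case analysis verifying that the $O_n$ actually cover $\kk$ (which you carry out correctly, using injectivity of $f$ and the fact that each point of $f(\pp)$ can lie in at most one summand). Both arguments are sound; the paper's is just the more economical instance of the same idea.
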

	\begin{proof}
		Obviously, $K=f^{-1}(\kk)$ is saturated since $f$ is an order embedding and $\kk$ is saturated.
		
		Suppose $E_K$ is infinite. Then there exists an infinite sequence $\{(e_k,e_k):k\in \n\}$ contained in $E_K$.
		By the definition of $E_K$, there exists an infinite set $\{(x_k,e_k):k\in \n\}$ contained in $K\backslash \Max(\pp)$.
		Let $D_n=\da \{(x_k,e_k):k\geqslant n\}$ for $n\in\n$. Then each  $D_n$ is Scott closed in  $\pp\backslash \Max(\pp)$.
		Thus each $f(D_n)$ is closed in $\Y$ by (P3) and
		$$f((x_n,e_n))\in f(D_n)\cap f(K)\subseteq f(D_n)\cap \kk\neq\emptyset.$$
		In fact, the filtered intersection $\bigcap_{n\in\n}f(D_n)=f(\bigcap_{n\in\n}D_n)=\emptyset$, which contradicts to the compactness of $\kk$.
		So $E_K$ is finite.
	\end{proof}
	
	\noindent{\bf Remark.}
	Note that $K$ is not always compact. So we cannot directly derive that $E_K$ is finite by Lemma \ref{compactset}.
	
	Let $P$ be a  poset. For $e\in \Max(P)$, define $\pp_e=\{(x,e):x\leqslant_{P} e\}$.
	Clearly, $\pp=\bigcup\{\pp_e:e\in  \Max(P)\}$, and if $e\neq d$ in $\Max(P)$, then $\pp_e\cap\pp_d=\emptyset$.
	By Lemma \ref{dcpomodel}, we can obtain the following lemma.
	\begin{lemma}\label{scottclosed}
		Let $P$ be a bounded complete algebraic poset and $A$ a Scott closed subset in $\pp$. If $E\subseteq \Max(\pp)$ and $E\cap A=\emptyset$, then $\bigcup_{(e,e)\in E}(A\cap \pp_e)$ is a Scott closed subset of $\pp\backslash  \Max(\pp)$.
	\end{lemma}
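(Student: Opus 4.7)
Let me write $B=\bigcup_{(e,e)\in E}(A\cap \pp_e)$. The first observation is that $B$ actually sits inside $\pp\setminus\Max(\pp)$: for each $(e,e)\in E$ we have $(e,e)\notin A$ (since $E\cap A=\emptyset$), so $A\cap \pp_e$ contains only non-maximal elements of $\pp_e$. After that, I must verify the two defining properties of a Scott closed subset of the subposet $\pp\setminus \Max(\pp)$: $B$ is a lower set (relative to $\pp\setminus \Max(\pp)$), and $B$ is closed under directed suprema that exist in $\pp\setminus \Max(\pp)$.

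For the lower-set property, take $(y,d)\in B$, so $(y,d)\in A\cap \pp_e$ for some $(e,e)\in E$, giving $d=e$ and $y<e$ (since $(y,d)\notin \Max(\pp)$). For any $(x,e')\in \pp\setminus \Max(\pp)$ with $(x,e')\leqslant (y,d)$, the defining order of $\pp$ in Lemma~\ref{dcpomodel} forces either $e'=d$ with $x\leqslant_P y$, or $y=d$ with $x\leqslant_P d$; the second option is excluded by $y<d$. So $e'=d=e$ and $x\leqslant_P y$, which places $(x,e')$ in $\pp_e$. Since $A$ is Scott closed in $\pp$, it is a lower set, so $(x,e')\in A\cap \pp_e\subseteq B$.

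For the directed-sup property, let $D\subseteq B$ be directed and assume $s:=\sup D$ exists in $\pp\setminus \Max(\pp)$. Apply Lemma~\ref{dcpomodel}(2). If $D$ has a maximal element, that element equals $s$ and lies in $D\subseteq B$, so we are done. Otherwise $D=\{(x_i,e):i\in I\}$ for some $e\in \Max(P)$ with $\{x_i\}_{i\in I}$ directed in $P$. Since $\pp_e$ meets $\pp_{e'}$ only when $e=e'$, the fact that $D\subseteq B$ forces $(e,e)\in E$ and $(x_i,e)\in A\cap \pp_e$ for every $i$. The crux is then to identify $s$: using that $P$ is bounded complete and each $x_i\leqslant_P e$, the supremum $y_0:=\sup_P\{x_i\}$ exists and $y_0\leqslant_P e$, so $(y_0,e)=\sup_{\pp}D$. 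The existence of $s$ in the subposet $\pp\setminus \Max(\pp)$ rules out $y_0=e$ (which would make the $\pp$-sup maximal and leave no upper bound in $\pp\setminus \Max(\pp)$), hence $y_0<e$ and $s=(y_0,e)$ coincides with the $\pp$-sup. Because $A$ is Scott closed in $\pp$, we conclude $s\in A$, and since $s\in \pp_e$, we obtain $s\in A\cap \pp_e\subseteq B$.

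\noindent\textbf{Main obstacle.}
The only delicate point is the transition between suprema in $\pp\setminus \Max(\pp)$ and suprema in $\pp$: a priori, Scott closedness of $A$ is known only in the ambient dcpo $\pp$, while $B$ must be verified to be Scott closed in the subposet $\pp\setminus \Max(\pp)$. Handling this requires precisely the case analysis above, showing that whenever $\sup D$ exists in the subposet, it must agree with $\sup_{\pp} D$; this is where bounded completeness of $P$ and the special structure of $\pp$ from Lemma~\ref{dcpomodel}(2) are both used.
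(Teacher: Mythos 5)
Your proof is correct and follows exactly the route the paper intends: the paper states this lemma without proof, simply asserting that it follows "by Lemma \ref{dcpomodel}", and your argument supplies the missing details via Lemma \ref{dcpomodel}(2). In particular you correctly handle the one genuinely delicate point, namely that a directed supremum taken in the subposet $\pp\setminus\Max(\pp)$ must coincide with the supremum in $\pp$ (using bounded completeness of $P$ to produce $\sup_P\{x_i\}$ and ruling out the case where that supremum is maximal), which is what lets the Scott closedness of $A$ in $\pp$ be transferred to the subposet.
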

	Similar to Lemma \ref{irr} and Corollary \ref{equation1}, we give some equation characterizations for the irreducible closed subsets of $\Y$ as follows.
	\begin{proposition}\label{irr1}
		Let  $\Y$ be a $T_0$ space and $P$  a bounded complete algebraic poset. Let a pair $\langle \Y, f\rangle $ satisfy (P1)$\sim$(P3), $\Aa\in \Irr(\Y)$ and $\Aa^*=\Aa\cap \ua_{\Y} f(\Max (\pp))$.
		Then the following statements hold:
		\begin{enumerate}[(i)]
			\item if $\Aa^*=\emptyset$, then there exists $(x,e) \in \pp\backslash \Max (\pp)$ such that $\Aa=f(\da (x,e))=\da_{\Y}f((x,e))$;
			\item if $\Aa^*\neq\emptyset$, then $\Aa=\cl_{\Y}(\Aa^*)$ and $\Aa^*$ is irreducible closed in $\ua_{\Y} f(\Max (\pp))$.
		\end{enumerate}
	\end{proposition}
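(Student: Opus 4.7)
The plan is to mirror the proof of Lemma \ref{irr} (from \cite{chen22}) and leverage the auxiliary lemma immediately preceding Lemma \ref{compact}, which tells us that $\Y$ splits as the disjoint union of the lower set $f(\pp\backslash\Max(\pp))$ and the upper set $\ua_{\Y}f(\Max(\pp))$. I will treat (i) and (ii) separately.

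For (i), since $\Aa^*=\emptyset$, the disjointness of the two parts forces $\Aa\subseteq f(\pp\backslash\Max(\pp))$. Put $A=f^{-1}(\Aa)$; by (P1) and injectivity of $f$, $A$ is Scott closed in $\pp$ with $A\subseteq \pp\backslash\Max(\pp)$ and $f(A)=\Aa$. To transport irreducibility back to $A$, suppose $A\subseteq B_1\cup B_2$ with $B_1,B_2$ Scott closed in $\pp$; intersecting with $\pp\backslash\Max(\pp)$ yields two Scott closed subsets of that subspace, so by (P3) their images are closed in $\Y$. Together with the irreducibility of $\Aa$ and Lemma \ref{inject}(2), one of the $B_i$ must contain $A$. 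Thus $A$ is irreducible closed in $\Sigma\pp$ with $A\cap\Max(\pp)=\emptyset$, so Lemma \ref{irr}(1) delivers $(x,e)\in\pp\backslash\Max(\pp)$ with $A=\da(x,e)$. Hence $\Aa=f(\da(x,e))$; since $f(\pp)$ is a lower set in $\Y$ (by the auxiliary lemma) and $f$ is an order embedding, this equals $\da_{\Y}f((x,e))$.

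For (ii), the decomposition of $\Y$ gives
\[
\Aa=\Aa^{*}\cup (\Aa\cap f(\pp\backslash\Max(\pp)))\subseteq \cl_{\Y}(\Aa^{*})\cup (\Aa\cap f(\pp\backslash\Max(\pp)))\subseteq \Aa,
\]
so these inclusions are equalities. Here $f(\pp\backslash\Max(\pp))$ is closed in $\Y$ by applying (P3) to the whole space $\pp\backslash\Max(\pp)$ (trivially Scott closed in itself), so both summands are closed. Irreducibility of $\Aa$ forces either $\Aa=\cl_{\Y}(\Aa^{*})$, which is the desired conclusion, or $\Aa\subseteq f(\pp\backslash\Max(\pp))$, which contradicts $\Aa^{*}\neq\emptyset$ since the two parts of $\Y$ are disjoint. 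Finally, $\Aa^{*}$ is closed in the subspace $\ua_{\Y}f(\Max(\pp))$ as the intersection with $\Aa$, and its irreducibility there follows from Lemma \ref{irrset} because $\cl_{\Y}(\Aa^{*})=\Aa$ is irreducible.

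The main obstacle is the irreducibility transfer in case (i): $f$ is only guaranteed to be an embedding, so images of Scott closed subsets of $\pp$ are not automatically closed in $\Y$. The fix is to restrict everything to $\pp\backslash\Max(\pp)$, where (P3) applies, and to combine it with the injectivity identity $f^{-1}(f(B))=B$. Everything else reduces to bookkeeping with the standing decomposition of $\Y$ and with Lemma \ref{irr}.
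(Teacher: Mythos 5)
Your proof of part (ii) has a genuine gap at its key step: the claim that $f(\pp\backslash\Max(\pp))$ is closed in $\Y$ ``by applying (P3) to the whole space $\pp\backslash\Max(\pp)$''. If (P3) were read as applying to every set that is Scott closed in the subposet $\pp\backslash\Max(\pp)$, it would indeed apply to $\pp\backslash\Max(\pp)$ itself --- but then (P3) would already fail for the paper's central example $\langle \Sigma\pp,{\rm id}_{\pp}\rangle$, since $\pp\backslash\Max(\pp)$ is generally \emph{not} Scott closed in $\pp$: a maximal element $(e,e)$ is the directed supremum of $\{(x_i,e)\}$ whenever $e=\sup_P x_i$ with all $x_i<e$, which happens as soon as $P$ has a non-compact maximal element (e.g.\ $P=Zh(X)$, where a neighbourhood filter is the directed sup of the principal filters below it). The reading the paper actually uses --- visible in Lemma \ref{scottclosed} and in the sentence ``Obviously, $C$ is also Scott closed in $\pp$'' in the proof of Lemma \ref{p123} --- is that $C$ is Scott closed in $\pp$ and contained in $\pp\backslash\Max(\pp)$, and under that reading neither $\pp\backslash\Max(\pp)$ nor (in your part (i)) $B_i\cap(\pp\backslash\Max(\pp))$ qualifies. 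Consequently your two-set cover $\Aa\subseteq\cl_{\Y}(\Aa^*)\cup(\Aa\cap f(\pp\backslash\Max(\pp)))$ is not a cover by closed sets, and the irreducibility argument for (ii) does not go through. The paper avoids this by setting $A=f^{-1}(\Aa)$ and splitting $A$ fibrewise: the fibres $A\cap\pp_e$ with $(e,e)\in A^*$ are absorbed into $\da A^*\subseteq\cl_{\pp}(A^*)$, and only $C=\bigcup\{A\cap\pp_e:(e,e)\notin A^*\}$ --- which Lemma \ref{scottclosed} certifies as a Scott closed subset of $\pp$ inside $\pp\backslash\Max(\pp)$, disjoint from $\Aa^*$ after applying $f$ --- is handled by (P3). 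Some such case division on whether $(e,e)$ lies in $A^*$ is unavoidable.

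Part (i) is affected by the same over-reading of (P3), but there the detour is unnecessary and easily repaired: since $\Aa\subseteq f(\pp)$ is closed in $\Y$, it is irreducible closed in the subspace $f(\pp)$ by Lemma \ref{irrset}, and $f:\Sigma\pp\longrightarrow f(\pp)$ is a homeomorphism by (P1), so $f^{-1}(\Aa)$ is irreducible Scott closed in $\pp$ directly, with no appeal to (P3); Lemma \ref{irr}(1) then finishes exactly as you say. Your final identification $f(\da(x,e))=\da_{\Y}f((x,e))$ via the auxiliary lemma is fine.
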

	\begin{proof}
		(i) Suppose $\Aa^*=\emptyset$. Then $\Aa\subseteq f(\pp\backslash\Max(\pp))$ by (P2), and  $\Aa$ is  irreducible closed in $f(\pp)$ by Lemma \ref{irrset}.
		Thus $f^{-1}(\Aa)$  is irreducible  Scott closed in $\pp$ because $f:\Sigma\pp\longrightarrow f(\pp)$ is  a homeomorphism.
		In fact,  $$\emptyset=f^{-1}(\Aa^*)=f^{-1}(\Aa\cap \ua_{\Y} f(\Max (\pp)))\supseteq f^{-1}(\Aa\cap f(\Max (\pp)))=f^{-1}(\Aa)\cap \Max (\pp).$$
		By Lemma \ref{irr}, there exists $(x,e) \in \pp\backslash \Max (\pp)$ such that $f^{-1}(\Aa)=\da (x,e)$.
		So $\Aa=f(\da (x,e))$.
		Obviously, 
		$f(\da (x,e))$ is closed in $\Y$ by (P2), and hence, $f(\da (x,e))=\cl_{\Y}(f(\da (x,e)))=\da_{\Y}f((x,e))$.
		
		(ii) Let $A=f^{-1}(\Aa)$. Then $A$ is Scott closed in $\pp$, but not always irreducible.
		We have
		\begin{align*}
			A & =\bigcup\{A\cap\pp_e: (e,e)\in A^*\}\cup\bigcup\{A\cap\pp_e: (e,e)\notin A^*\}\\
			&\subseteq \da A^*\cup\bigcup\{A\cap\pp_e: (e,e)\notin A^*\}\\
			&\subseteq \cl_{\pp}(A^*)\cup\bigcup\{A\cap\pp_e: (e,e)\notin A^*\},
		\end{align*}
		where $A^*=A\cap \Max (\pp)$.
		Denote $C=\bigcup\{A\cap\pp_e: (e,e)\notin A^*\}$. Then by Lemma \ref{scottclosed}, $C$ is Scott closed and contained in $\pp\backslash  \Max(\pp)$.
		It follows that $f(C)$ is closed in $\Y$ by (P3).
		In addition, by Lemma \ref{inject}, we have
		\begin{align*}
			f(A^*)&= f(A\cap \Max (\pp)) \\
			&=f(A)\cap f(\Max (\pp))\\
			&\subseteq\Aa\cap \ua_{\Y} f(\Max (\pp))\\
			&=\Aa^*.
		\end{align*}
		Hence, $f(\cl_{\pp}(A^*))\subseteq\cl_{\Y}(f(A^*))\subseteq\cl_{\Y}(\Aa^*)$.
		
		Since $f$ is a topological embedding, $f(A)=\Aa\cap f(\pp)$. Then by (P2),  we have
		\begin{align*}
			\Aa & =(\Aa\cap\ua_{\Y}f(\Max(\pp)) )\cup(\Aa\cap f(\pp))\\
			& =\Aa^*\cup f(A)\\
			&\subseteq \cl_{\Y}(\Aa^*)\cup f(\cl_{\pp}(A^*))\cup f(C)\\
			&=\cl_{\Y}(\Aa^*)\cup f(C).
		\end{align*}
		Thus $\Aa=\cl_{\Y}(\Aa^*)$ since $\Aa$ is irreducible and $\Aa^*$ is nonempty.
		By Lemma \ref{irrset},  $\Aa^*$ is irreducible closed in $\ua_{\Y} f(\Max (\pp))$.
		
		The proof is complete.
	\end{proof}
	
	According to Proposition \ref{irr1}, Lemmas  \ref{irrset} and  \ref{closure}, we can obtain the following corollary.
	\begin{corollary}
		Let  $\Y$ be a $T_0$ space and $P$  a bounded complete algebraic poset. Let a pair $\langle \Y, f\rangle $ satisfy (P1)$\sim$(P3). We have the following equations:
		\begin{equation*}\left\{
			\begin{aligned}
				\Irr (\Y) &= \{\cl_{\pp}(A):A\in \Irr (\ua_{\Y} f(\Max (\pp)))\}\cup\{f(\da (x,e)): (x,e)\in \pp\backslash \Max (\pp)\},\\
				\Irr (\ua_{\Y} f(\Max (\pp))) &= \{A\cap\ua_{\Y} f(\Max (\pp)): A\cap\ua_{\Y} f(\Max (\pp))\neq\emptyset ~\rm {and}~ A\in \Irr (\Y) \}.
			\end{aligned}
			\right.\end{equation*}
	\end{corollary}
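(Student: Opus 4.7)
The plan is to derive each of the two equations by pairing Proposition \ref{irr1} with Lemmas \ref{irrset} and \ref{closure}; almost all the content has already been unpacked in Proposition \ref{irr1}, so the corollary is mostly a matter of bookkeeping.

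For the first equation, the inclusion $\Irr(\Y)\subseteq\{\cl_{\Y}(A):A\in\Irr(\ua_{\Y}f(\Max(\pp)))\}\cup\{f(\da(x,e)):(x,e)\in\pp\setminus\Max(\pp)\}$ is exactly Proposition \ref{irr1}: given $\Aa\in\Irr(\Y)$ and $\Aa^{*}=\Aa\cap\ua_{\Y}f(\Max(\pp))$, case (i) places $\Aa$ in the second set on the right, while case (ii) says $\Aa=\cl_{\Y}(\Aa^{*})$ with $\Aa^{*}\in\Irr(\ua_{\Y}f(\Max(\pp)))$, placing $\Aa$ in the first set. For the reverse inclusion I would check the two pieces separately. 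If $A\in\Irr(\ua_{\Y}f(\Max(\pp)))$, then Lemma \ref{irrset} immediately gives $\cl_{\Y}(A)\in\Irr(\Y)$. If $(x,e)\in\pp\setminus\Max(\pp)$, then $\da(x,e)$ is the closure of the irreducible singleton $\{(x,e)\}$ in $\Sigma\pp$, hence lies in $\Irr(\pp)$; since $f$ is a topological embedding (P1) and $f(\pp)$ is a lower set in $\Y$ by (P2), one verifies $f(\da(x,e))=\da_{\Y}f((x,e))$, which is closed and irreducible in $\Y$ by another appeal to Lemma \ref{irrset}.

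For the second equation, the $\supseteq$ direction is the final statement of Proposition \ref{irr1}(ii): whenever $A\in\Irr(\Y)$ meets $\ua_{\Y}f(\Max(\pp))$, the trace $A\cap\ua_{\Y}f(\Max(\pp))$ is irreducible closed in the subspace $\ua_{\Y}f(\Max(\pp))$. For the $\subseteq$ direction, I would start from $B\in\Irr(\ua_{\Y}f(\Max(\pp)))$ and set $A=\cl_{\Y}(B)$. Lemma \ref{irrset} gives $A\in\Irr(\Y)$, and Lemma \ref{closure} applied to $\ua_{\Y}f(\Max(\pp))$ as a subspace of $\Y$ yields $B=A\cap\ua_{\Y}f(\Max(\pp))$, which is nonempty since $B$ is; this exhibits $B$ in the required form.

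I do not expect a serious obstacle, since the heavy lifting---the structure theorem for $\Irr(\Y)$ in the presence of (P1)--(P3)---was done in Proposition \ref{irr1}. The one point that merits care is the equality $f(\da(x,e))=\da_{\Y}f((x,e))$, which I would justify using that $f$ is an order embedding for the specialization orders together with the Remark after Lemma that $f(\pp)=\da_{\Y}f(\Max(\pp))$, so every $\Y$-predecessor of $f((x,e))$ already lies in $f(\pp)$ and, by the order-embedding property, corresponds to a point below $(x,e)$ in $\pp$.
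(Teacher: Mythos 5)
Your proposal is correct and follows exactly the route the paper intends: the paper gives no written proof, stating only that the corollary follows from Proposition \ref{irr1} together with Lemmas \ref{irrset} and \ref{closure}, and your argument assembles precisely those three ingredients (including the only nontrivial check, that $f(\da(x,e))=\da_{\Y}f((x,e))$ is a point closure and hence irreducible closed in $\Y$). You also correctly read $\cl_{\pp}$ in the first displayed equation as the typo it is, replacing it with $\cl_{\Y}$.
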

Next, we will try to get some equation characterizations for the  closed KF-subsets of $\Y$.
	\begin{lemma}\label{cofinal}
		Let $X$ be a $T_0$ space and $\K$ a filtered family of $Q(X)$. If $\K_1$ is a cofinal subfamily of $\K$ and $A\subseteq X$, then $\overline{A}\in m(\K)$ if and only if $\overline{A}\in m(\K_1)$.
	\end{lemma}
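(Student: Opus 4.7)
The plan is to show that the two collections of ``witnesses'' coincide, namely that a closed set $C\subseteq X$ meets every member of $\K$ if and only if it meets every member of $\K_1$; once this is established, minimality within either collection is automatically the same, and the biconditional $\overline{A}\in m(\K) \iff \overline{A}\in m(\K_1)$ follows.

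For the forward direction I would simply observe that $\K_1\subseteq\K$, so any closed set hitting every element of $\K$ certainly hits every element of $\K_1$. For the reverse direction I would use cofinality directly: given $K\in\K$, pick $K_1\in\K_1$ with $K_1\subseteq K$; if $C$ is a closed set with $C\cap K_1\neq\emptyset$, then $C\cap K\supseteq C\cap K_1\neq\emptyset$. Hence the collection of closed sets meeting every member of $\K$ is exactly the collection of closed sets meeting every member of $\K_1$.

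With this equality in hand, I would finish by noting that ``minimal closed set meeting all of $\K$'' and ``minimal closed set meeting all of $\K_1$'' are minimality conditions taken in identical partially ordered sets (ordered by inclusion), so the two notions of minimality agree. In particular, $\overline{A}$ belongs to one iff it belongs to the other.

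I do not expect any real obstacle here: the lemma is a definitional unwinding of cofinality in the filtered family $\K$, and the key point is simply the monotonicity of the relation ``meets''. The only care needed is to notice that cofinality is used only in one direction (to push a hit on $\K_1$ up to a hit on $\K$), while the other direction uses only the inclusion $\K_1\subseteq\K$.
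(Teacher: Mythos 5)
Your proposal is correct and uses exactly the same two ingredients as the paper's proof ($\K_1\subseteq\K$ for one direction of the ``meets'' equivalence, cofinality for the other); the only difference is presentational, in that you first establish that the two collections of closed witnesses coincide and then read off the equality of their minimal elements, whereas the paper verifies minimality directly in each direction. No gap.
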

	\begin{proof}
		Suppose that $\overline{A}\in m(\K)$. Then $ \overline{A}$ intersects all members of $\K_1$.
		Assume that $B$ is a closed subset of $\overline{A}$ and intersects all members of $\K_1$.
		Then for any $K\in \K$,  there is a $K'\in \K_1$ such that $K'\subseteq K$, which implies that $\emptyset \neq B\cap K'\subseteq B\cap K$.
		It follows that $B=\overline{A}$.
		
		Conversely,  suppose that $\overline{A}\in m(\K_1)$.
		For any $K\in \K$,  there is a $K'\in \K_1$ such that $K'\subseteq K$, which leads to $\emptyset \neq A\cap K'\subseteq A\cap K$.
		That is,  $ \overline{A}$ intersects all members of $\K$.
		Assume that $B$ is a closed subset of $\overline{A}$ and intersects all members of $\K$.
		Then $B$ intersects all members of $\K_1$.
		It follows that $B=\overline{A}$.
	\end{proof}

	\begin{proposition}\label{kfset}
		Let  $\Y$ be a $T_0$ space and $P$  a bounded complete algebraic poset. Let a pair $\langle \Y, f\rangle $ satisfy (P1)$\sim$(P3),  $\Aa\in\KF(\Y)$ and $\Aa^*=\Aa\cap \ua_{\Y} f(\Max (\pp))$.
		Then the following properties hold:
		\begin{enumerate}[(i)]
			\item if $\Aa^*=\emptyset$, then there exists $(x,e) \in \pp\backslash \Max (\pp)$ such that $\Aa=f(\da (x,e))=\da_{\Y}f((x,e))$;
			\item if $\Aa^*\neq\emptyset$, then $\Aa=\cl_{\Y}(\Aa^*)$ and $\Aa^*$ is  a closed KF-set of $\ua_{\Y} f(\Max (\pp))$.
		\end{enumerate}
	\end{proposition}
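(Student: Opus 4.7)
For part (i), the conclusion is immediate from Lemma \ref{subset} combined with Proposition \ref{irr1}(i): since $\KF(\Y)\subseteq\Irr(\Y)$, any $\Aa\in\KF(\Y)$ is in particular irreducible closed, so Proposition \ref{irr1}(i) supplies the $(x,e)\in\pp\setminus\Max(\pp)$ with $\Aa=f(\da(x,e))=\da_{\Y}f((x,e))$.

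For part (ii), Proposition \ref{irr1}(ii) already gives $\Aa=\cl_{\Y}(\Aa^*)$ and $\Aa^*\in\Irr(\ua_{\Y}f(\Max(\pp)))$, so what remains is to upgrade this to $\Aa^*\in\KF(\ua_{\Y}f(\Max(\pp)))$. A useful first observation is that $\ua_{\Y}f(\Max(\pp))$ is open in $\Y$: since $\pp\setminus\Max(\pp)$ is trivially Scott closed in itself, (P3) forces $f(\pp\setminus\Max(\pp))$ to be closed in $\Y$, and by (P2) and the Remark its complement is exactly $\ua_{\Y}f(\Max(\pp))$. Next, fix a filtered family $\K\subseteq Q(\Y)$ witnessing $\Aa\in m(\K)$. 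I would first show that every $K\in\K$ meets $\Aa^*$: if some $K_1\in\K$ misses $\Aa^*$, then $K_1\cap\Aa\subseteq\Aa\cap f(\pp\setminus\Max(\pp))$, which is a proper closed subset of $\Aa$ (proper because $\Aa^*\neq\emptyset$); minimality of $\Aa$ yields some $K_2\in\K$ missing $\Aa\cap f(\pp\setminus\Max(\pp))$, and filteredness then produces $K_3\subseteq K_1\cap K_2$ with $K_3\cap\Aa=\emptyset$, contradicting $\Aa\in m(\K)$.

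Finally, I would construct a filtered family $\K'\subseteq Q(\ua_{\Y}f(\Max(\pp)))$ with $\Aa^*\in m(\K')$. Lemma \ref{compact} is the central tool: for each $K\in\K$, the set $E_{f^{-1}(K)}$ is finite, and saturatedness of $K$ in $\Y$ forces $f(E_{f^{-1}(K)})\subseteq K\cap\ua_{\Y}f(\Max(\pp))$ (each $f((x,e))\in K$ with $(x,e)\leq(e,e)$ pushes $f((e,e))$ into the upper set $K$). For each $K\in\K$ I would take $\tilde K$ to be a compact saturated subset of $\ua_{\Y}f(\Max(\pp))$ built from the finite set $f(E_{f^{-1}(K)})$, augmented by a finite witness from $K\cap\Aa^*$ (guaranteed nonempty by the previous step) so that $\tilde K\cap\Aa^*\neq\emptyset$; openness of $\ua_{\Y}f(\Max(\pp))$ in $\Y$ ensures that finite unions of principal upper sets stay compact saturated in the subspace. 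Filteredness of $\{\tilde K\}_{K\in\K}$ should follow from the monotonicity $K'\subseteq K\Rightarrow E_{f^{-1}(K')}\subseteq E_{f^{-1}(K)}$, possibly after passing to a cofinal subfamily via Lemma \ref{cofinal}. Minimality of $\Aa^*$ in $m(\K')$ is inherited from minimality of $\Aa$ in $m(\K)$ via Lemma \ref{closure}: any strictly smaller closed subset of $\ua_{\Y}f(\Max(\pp))$ meeting every $\tilde K$ would close up in $\Y$ to a proper closed subset of $\Aa$ still meeting every $K\in\K$. The main obstacle is pinning down the correct $\tilde K$: Lemma \ref{compact} supplies the essential finiteness, but arranging filteredness, $\tilde K\cap\Aa^*\neq\emptyset$, and compact saturatedness in the subspace simultaneously requires careful bookkeeping.
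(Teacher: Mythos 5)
Part (i) and the first half of part (ii) are correct and coincide with the paper's reduction: since $\KF(\Y)\subseteq\Irr(\Y)$ by Lemma \ref{subset}, everything except the statement that $\Aa^*$ is a closed KF-set of $\ua_{\Y}f(\Max(\pp))$ follows from Proposition \ref{irr1}. Several of your intermediate observations are also correct and genuinely useful: $\ua_{\Y}f(\Max(\pp))$ is open in $\Y$ (apply (P3) to $C=\pp\backslash\Max(\pp)$ and use the disjoint decomposition from (P2)); every $\kk\in\K$ meets $\Aa^*$; and $f(E_{f^{-1}(\kk)})\subseteq\kk$ by saturation.

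The gap is in the construction of the witnessing family $\K'$, and it is not mere bookkeeping. Your $\tilde{\kk}=\ua_{\Y}\bigl(f(E_{f^{-1}(\kk)})\cup\{y_{\kk}\}\bigr)$ need not be filtered: the family $\{E_{f^{-1}(\kk)}\}$ is a filtered family of \emph{finite} sets and therefore stabilizes at a least element $E_{K_0}$, so that part of $\tilde{\kk}$ is eventually constant and contributes nothing, while the witnesses $y_{\kk}\in\kk\cap\Aa^*$ are chosen independently for each $\kk$; for $\kk_3\subseteq\kk_1\cap\kk_2$ there is no reason why $y_{\kk_3}$ should lie above $y_{\kk_1}$ or above a point of $f(E_{K_0})$, so no member of your family need sit inside $\tilde{\kk_1}\cap\tilde{\kk_2}$. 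Replacing the single witness by $\ua_{\Y}(\kk\cap\Aa^*)$ does not help either, since $\kk\cap\Aa^*$ is only an open subset of the compact set $\kk\cap\Aa$ and need not be compact. The paper closes exactly this hole by a second appeal to the minimality of $\Aa$: after refining to the cofinal family on which $E_{f^{-1}(\kk)}=E_{K_0}$, it splits into the case $A\cap E_{K_0}\neq\emptyset$ (where $\Aa$ is a point closure $\da_{\Y}f((e,e))$ and one is done) and the case $A\cap E_{K_0}=\emptyset$, where $C=\bigcup_{(e,e)\in E_{K_0}}(A\cap\pp_e)$ gives a closed set $f(C)$ disjoint from $\Aa^*$, hence a proper closed subset of $\Aa$; minimality yields $\kk_1$ with $f(C)\cap\kk_1=\emptyset$, and on the resulting cofinal family one obtains $\Aa\cap\kk=\Aa^*\cap\kk\subseteq\ua_{\Y}f(\Max(\pp))$. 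Only then is $\Aa^*\cap\kk$ compact, and $\K_3=\{\ua_{\Y}(\Aa^*\cap\kk)\}$ is filtered for free because $\kk\mapsto\Aa^*\cap\kk$ is monotone. Your sketch omits this step, which is the one that makes the traces of the $\kk$'s on the subspace compact; without it the construction cannot be completed as described.
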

	\begin{proof}
		By Lemma \ref{subset}, $\Aa$ is irreducible. Then by Proposition \ref{irr1}, we only need to prove that if $\Aa^*\neq\emptyset$, then $\Aa^*$ is  a closed KF-subset of $\ua_{\Y} f(\Max (\pp))$.
		
		Since $\Aa$ is a  KF-subset, there exists a filtered family $\K$ of $Q(\Y)$ such that $\Aa\in m(\K)$.
		Consider the family $\{E_{f^{-1}(\kk)}:\kk\in\K\}$.
		From Lemma \ref{compact}, each $f^{-1}(\kk)$ is saturated and $E_{f^{-1}(\kk)}$ is finite.
		Thus $\{E_{f^{-1}(\kk)}:\kk\in\K\}$ is a filtered family of finite subsets, which has a least element $E_{f^{-1}(\kk_0)}$ for some $\kk_0\in \K$.
		Then the family $$\K_1=\{\kk\subseteq\kk_0:\kk\in \K\}$$ is cofinal; and hence, $\Aa\in m(\K_1)$ by Lemma \ref{cofinal}.
		
		Let $K_0=f^{-1}(\kk_0)$  and $A=f^{-1}(\Aa)$.
		Then $A$ is Scott closed in $\pp$ and $$E_{f^{-1}(\kk)}=E_{K_0}\subseteq f^{-1}(\kk)$$ for all $\kk\in \K_1$.
		To complete the proof, we need to consider the following two cases: $A\cap E_{K_0}\neq\emptyset$ and $A\cap E_{K_0}=\emptyset$.
		
		Case 1. $A\cap E_{K_0}\neq\emptyset$.
		
		Pick an element $(e,e)\in A\cap E_{K_0}\subseteq\Max(\pp)$.
		Then $$f((e,e))\in f(A\cap E_{K_0})=f(A)\cap f(E_{K_0})\subseteq \Aa\cap \kk$$
		for all $\kk\in \K_1$. That is, $\da_{\Y}f((e,e))$ intersects all members of $\K_1$ and is a closed subset of $\Aa$.
		It follows that $\Aa=\da_{\Y}f((e,e))$.
		
		Case 2. $A\cap E_{K_0}=\emptyset$.
		
		Let $C= \bigcup_{(e,e)\in E_{K_0}}(A\cap\pp_e).$ By Lemma \ref{scottclosed}, $C$ is Scott closed in $\pp\backslash \Max(\pp)$, and then $f(C)$ is closed in $\Y$ by (P3).
		Note that $f(C)\subseteq f(\pp\backslash \Max(\pp))$ and $\Aa^*\subseteq\ua_{\Y} f(\Max (\pp))$.
		Thus $f(C)\cap \Aa^*=\emptyset$, which means that $f(C)$ is a proper closed subset  of $\Aa$.
		By the minimality of $\Aa$, there exists a $\kk_1\in \K_1$ such that $f(C)\cap \kk_1=\emptyset$.
		So the family $$\K_2=\{\kk\subseteq\kk_1:\kk\in \K_1\}$$ is cofinal, and then $\Aa\in m(\K_2)$ by Lemma \ref{cofinal}.
		
		For each $\kk\in\K_2$, $f^{-1}(\kk)$ is saturated and $E_{f^{-1}(\kk)}$ is equal to $E_{K_0}$. 
		The following decomposition is natural:
		$$f^{-1}(\kk)=(f^{-1}(\kk)\backslash \da E_{K_0})\cup (f^{-1}(\kk)\cap \da E_{K_0}).$$ 
		Note that
		\begin{align*}
			f^{-1}(\kk)\backslash \da E_{K_0}& \subseteq \Max(\pp), \\
			f^{-1}(\kk)\cap \da E_{K_0} & =\bigcup_{(e,e)\in E_{K_0}}(f^{-1}(\kk)\cap\pp_e).
		\end{align*}
		Then we have $$A\cap f^{-1}(\kk)\cap \da E_{K_0}=f^{-1}(\kk)\cap C\subseteq\pp\backslash \Max(\pp).$$
		So $$f(A\cap f^{-1}(\kk)\cap \da E_{K_0})\subseteq f(f^{-1}(\kk)\cap C)\subseteq\kk\cap f(C)=\emptyset.$$
		It follows that
		\begin{align*}
			\Aa\cap\kk\cap f(\pp) & =f(f^{-1}(\Aa\cap \kk))\\
			&=f(A\cap f^{-1}(\kk))\\
			& =f(A\cap (f^{-1}(\kk)\backslash \da E_{K_0}))\cup f(A\cap f^{-1}(\kk)\cap \da E_{K_0})\\
			&=f(A\cap (f^{-1}(\kk)\backslash \da E_{K_0}))\\
			&\subseteq f(\Max(\pp)).
		\end{align*}
		Therefore, $$\Aa\cap \kk=\Aa\cap \kk \cap \ua_{\Y} f(\Max (\pp))=\Aa^*\cap\kk\subseteq \ua_{\Y} f(\Max (\pp)).$$
		Since $\Aa$ is closed and $\kk$ is compact, the intersection $\Aa\cap \kk=\Aa^*\cap\kk$ is compact in $\Y$ as well as in $\ua_{\Y} f(\Max (\pp))$.
		So the family $$\K_3=\{\ua_{\Y}(\Aa^*\cap\kk):\kk\in \K_2\}$$ is a filtered  family of compact saturated subsets of $\ua_{\Y} f(\Max (\pp))$ and $\Aa^*$ intersects all members of $\K_3$.
		
		Suppose that $\Bb$ is a closed subset of $\Aa^*$  and  intersects all members of $\K_3$.
		Pick $y\in \Bb\cap \ua_{\Y}(\Aa^*\cap\kk)$. Then there exists a $y'\in \Aa^*\cap\kk$ such that $y'\leqslant_{\Y}y$.
		Thus $y'\in \Bb$, which means $y'\in\Bb\cap\Aa^*\subseteq\Bb\cap\Aa$.
		It follows that $\cl_{\Y}(\Bb)$ is a closed subset of $\Aa$ and intersects all members of $\K_2$.
		So $\cl_{\Y}(\Bb)=\Aa$, which implies that $\Bb=\Aa^*$.
		
		Hence, $\Aa^*$ is  a closed KF-subset of $\ua_{\Y} f(\Max (\pp))$.
		
		The proof is complete.
	\end{proof}
	
	According to Proposition \ref{kfset}, Lemmas \ref{map} and  \ref{closure}, we can obtain the following corollary.
	\begin{corollary}\label{equation2}
		Let  $\Y$ be a $T_0$ space and $P$  a bounded complete algebraic poset. Let a pair $\langle \Y, f\rangle $ satisfy (P1)$\sim$(P3). We have the following equations:
		\begin{equation*}\left\{
			\begin{aligned}
				\KF (\Y) =& \{\cl_{\Y}(A):A\in \KF (\ua_{\Y} f(\Max (\pp)))\}\cup\{f(\da (x,e)): (x,e)\in \pp\backslash \Max (\pp)\}, \\
				\KF (\ua_{\Y} f(\Max (\pp)))= & \{A\cap\ua_{\Y} f(\Max (\pp)): A\cap\ua_{\Y} f(\Max (\pp))\neq\emptyset ~\rm {and}~ A\in \KF (\Y) \}.
			\end{aligned}
			\right.\end{equation*}
	\end{corollary}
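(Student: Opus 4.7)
The plan is to derive both equations by combining Proposition \ref{kfset} (for the $\subseteq$-direction of the first equation and the forward direction of the second) with Lemmas \ref{map}, \ref{subset}, and \ref{closure} (for the converses). The content of the corollary is almost entirely packaged in Proposition \ref{kfset}; what remains is to verify the easy reverse containments.

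For the first equation, I would argue $\subseteq$ by case analysis directly from Proposition \ref{kfset}: given $\Aa \in \KF(\Y)$, set $\Aa^* = \Aa \cap \ua_{\Y} f(\Max(\pp))$; if $\Aa^* = \emptyset$, part (i) gives $\Aa = f(\da(x,e))$ for some $(x,e) \in \pp \backslash \Max(\pp)$, and if $\Aa^* \neq \emptyset$, part (ii) gives $\Aa = \cl_{\Y}(\Aa^*)$ with $\Aa^* \in \KF(\ua_{\Y} f(\Max(\pp)))$. For $\supseteq$, the sets $f(\da(x,e)) = \da_{\Y} f((x,e))$ are closures of singletons, hence closed KF-sets by Lemma \ref{subset}. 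For the other family, I would apply Lemma \ref{map}(2) to the continuous inclusion $\iota: \ua_{\Y} f(\Max(\pp)) \hookrightarrow \Y$: if $A \in \KF(\ua_{\Y} f(\Max(\pp)))$, then $\iota(A) = A \in \wds(\Y)$, and since a set is a KF-set iff its closure is, we get $\cl_{\Y}(A) \in \KF(\Y)$.

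For the second equation, the forward inclusion is exactly Proposition \ref{kfset}(ii): any $\Aa \in \KF(\Y)$ with $\Aa \cap \ua_{\Y} f(\Max(\pp)) \neq \emptyset$ satisfies $\Aa^* \in \KF(\ua_{\Y} f(\Max(\pp)))$. For the reverse, given $B \in \KF(\ua_{\Y} f(\Max(\pp)))$, I set $\Aa = \cl_{\Y}(B)$; the previous paragraph shows $\Aa \in \KF(\Y)$, and Lemma \ref{closure} gives $B = \cl_{\Y}(B) \cap \ua_{\Y} f(\Max(\pp)) = \Aa \cap \ua_{\Y} f(\Max(\pp))$, so $B$ lies in the right-hand set with witness $\Aa$. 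Non-emptiness of the intersection is automatic since $B \subseteq \ua_{\Y} f(\Max(\pp))$ and $B$ is nonempty.

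No real obstacle is expected: the heavy lifting was done in Proposition \ref{kfset}, and the only step that requires a moment's care is noting that $\iota$ really is continuous (which follows from $\ua_{\Y} f(\Max(\pp))$ carrying the subspace topology) so that Lemma \ref{map}(2) applies to transport KF-ness from the subspace up to $\Y$.
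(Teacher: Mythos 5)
Your proposal is correct and follows exactly the route the paper intends: the paper gives no explicit proof, merely citing Proposition \ref{kfset} for the hard containments and Lemmas \ref{map} and \ref{closure} for the easy converses, which is precisely how you organize the argument (including the observation that $f(\da(x,e))=\da_{\Y}f((x,e))$ is a point closure and that KF-ness transports along the continuous inclusion of the subspace). No gaps.
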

	Obviously, the pair  $\langle \Sigma \pp, \rm{ id}_{\pp}\rangle $ satisfies (P1)$\sim$(P3), where $\rm{ id}_{\pp}$ is an identity mapping on $\pp$.
 So we can get the result as follows.
	\begin{corollary}\label{kfset2}
		Let $P$ be a bounded complete algebraic poset. We have the following equations:
		\begin{equation*}\left\{
			\begin{aligned}
				\KF (\pp) =& \{\cl_{\pp}(A):A\in \KF ( \Max (\pp))\}\cup\{(\da (x,e): (x,e)\in \pp\backslash \Max (\pp)\}, \\
				\KF ( \Max (\pp))= & \{A\cap \Max (\pp): A\cap \Max (\pp)\neq\emptyset ~\rm {and}~ A\in \KF (\pp) \}.
			\end{aligned}
			\right.\end{equation*}
	\end{corollary}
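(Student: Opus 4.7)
The plan is to obtain Corollary \ref{kfset2} as an immediate instantiation of Corollary \ref{equation2} by taking the pair $\langle \mathbb{Y}, f\rangle = \langle \Sigma\pp, \mathrm{id}_{\pp}\rangle$. The paper already signals this approach in the sentence preceding the corollary, so the real content is checking that the identity pair genuinely satisfies conditions (P1)$\sim$(P3), and then reading off the two equations under the substitutions $f = \mathrm{id}$ and $\mathbb{Y} = \Sigma\pp$.

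First I would verify the three conditions. Condition (P1) is trivial, since $\mathrm{id}_{\pp}\colon \Sigma\pp \longrightarrow \Sigma\pp$ is a homeomorphism, hence a topological embedding. For (P2), every maximal point of $\pp$ is above only itself in $\pp$, so $\ua_{\pp}\Max(\pp) = \Max(\pp)$ and therefore
\[
\ua_{\pp}\mathrm{id}_{\pp}(\Max(\pp)) \cup \mathrm{id}_{\pp}(\pp\setminus\Max(\pp)) = \Max(\pp)\cup(\pp\setminus\Max(\pp)) = \pp,
\]
while $\mathrm{id}_{\pp}(\pp) = \pp$ is vacuously a lower subset of itself. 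For (P3), under the convention used in Lemma \ref{scottclosed} (a Scott closed subset of $\pp\setminus \Max(\pp)$ is understood as a Scott closed subset of $\pp$ that happens to sit inside $\pp\setminus\Max(\pp)$), the implication ``$C$ Scott closed in $\pp\setminus\Max(\pp)$ $\Rightarrow$ $\mathrm{id}_{\pp}(C) = C$ closed in $\Sigma\pp$'' is tautological.

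Having checked (P1)$\sim$(P3), I would then invoke Corollary \ref{equation2}. Under the substitution $f = \mathrm{id}_{\pp}$, $\mathbb{Y} = \Sigma\pp$, we have $f(\Max(\pp)) = \Max(\pp)$ and $\ua_{\mathbb{Y}} f(\Max(\pp)) = \ua_{\pp}\Max(\pp) = \Max(\pp)$; likewise $f(\da(x,e)) = \da(x,e)$ and $\cl_{\mathbb{Y}}(A) = \cl_{\pp}(A)$. Substituting these directly into the two equations given by Corollary \ref{equation2} yields
\[
\KF(\pp) = \{\cl_{\pp}(A) : A\in \KF(\Max(\pp))\}\cup \{\da(x,e) : (x,e)\in \pp\setminus\Max(\pp)\}
\]
and
\[
\KF(\Max(\pp)) = \{A\cap \Max(\pp) : A\cap \Max(\pp)\neq \emptyset \text{ and } A\in \KF(\pp)\},
\]
which is exactly the conclusion.

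There is no genuine obstacle here; the work has been carried out in Proposition \ref{kfset} and Corollary \ref{equation2}, and Corollary \ref{kfset2} is simply their specialization to the canonical pair. The only point worth flagging is the reading of condition (P3) as applied to the identity pair, where one should interpret ``Scott closed in $\pp\setminus\Max(\pp)$'' in the same sense used throughout Section~3 (in particular in Lemma \ref{scottclosed}); under that interpretation no subspace-versus-ambient-topology subtlety arises, and the verification becomes immediate.
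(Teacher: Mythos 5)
Your proposal matches the paper's own argument: the paper likewise obtains Corollary \ref{kfset2} by observing that the pair $\langle \Sigma\pp, {\rm id}_{\pp}\rangle$ satisfies (P1)$\sim$(P3) and then specializing Corollary \ref{equation2}. Your explicit verification of the three conditions (which the paper dismisses as obvious) and the substitution $\ua_{\Y} f(\Max(\pp)) = \Max(\pp)$ are both correct.
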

The above corollary characterizes the closed KF-subsets of a $T_1$ space and its Xi-Zhao  model.
%
How about the closed WD subsets?

Motivated by \cite{xu_t0_2020}, the well-filtered reflections of $T_0$ spaces can be constructed by  closed WD subsets.
We try  to study  the well-filtered reflections of the Xi-Zhao dcpo model of  a $T_1$ space.

	\begin{definition}(\cite{keimel_d-completions_2009})
		Let $X$ be a $T_0$ space. A \emph{sobrification} of $X$ is a pair $\langle \widetilde{X}, \mu\rangle$ consisting of a sober space $\widetilde{X}$ and a continuous mapping $\mu: X\longrightarrow \widetilde{X}$ satisfying that for any continuous mapping $f: X\longrightarrow Y$ to a sober space, there exists a unique continuous mapping $\tilde{f}: X\longrightarrow \widetilde{X}$ such that $\tilde{f} \circ \mu=f$, that is, the following diagram commutes.
		\begin{figure}[H]
			\centering
			\tikzstyle{format}=[rectangle,draw=white,thin,fill=white]
			\tikzstyle{test}=[diamond,aspect=2,draw,thin]
			\tikzstyle{point}=[coordinate,on grid,]
			\begin{tikzpicture}
				\node[format] (X){$X$};
				\node[format,right of=X,node distance=20mm] (Xs){$\widetilde{X}$};
				\node[format,below of=Xs,node distance=15mm] (Y){$Y$};
				\draw[->] (X.east)--node[above]{$\mu$}(Xs.west);
				\draw[->] (X)--node[below]{$f$}(Y);
				\draw[dotted,->] (Xs)--node[right]{$\tilde{f}$}(Y);
			\end{tikzpicture}
		\end{figure}
	\end{definition}
	\begin{lemma}(\cite{goubault-larrecq_non-hausdorff_2013})\label{completion1}
		Let $X$ be a $T_0$ space.
		The space $X^s=P_H(\Irr(X))$ with the canonical mapping $\eta_{X} : X \longrightarrow X^s$ is the sobrification of $X$.
	\end{lemma}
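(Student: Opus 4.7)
The plan is to establish the two defining properties of a sobrification: that $X^s = P_H(\Irr(X))$ is itself sober, and that the pair $\langle X^s, \eta_X\rangle$ enjoys the required universal property for continuous maps from $X$ into sober spaces.

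For sobriety, I would invoke Lemma \ref{embedding}(1), which identifies the closed subsets of $X^s$ with those of $X$ via the assignment $B\mapsto \Box_{\Irr(X)}B$. This is a lattice isomorphism, so under it the irreducible closed subsets of $X^s$ correspond exactly to the elements of $\Irr(X)$. For each $A\in \Irr(X)$, viewed as a point of $X^s$, one checks $\cl_{X^s}(\{A\}) = \Box_{\Irr(X)}A$, since $A\in \Box_{\Irr(X)}B$ iff $A\subseteq B$; thus $A$ is a generic point of the irreducible closed set $\Box_{\Irr(X)}A$. Uniqueness of the generic point is immediate from Lemma \ref{embedding}(2), where the specialization order on $X^s$ is shown to be set inclusion.

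For the universal property, given a continuous $f:X\longrightarrow Y$ with $Y$ sober, I would define $\tilde{f}:X^s\longrightarrow Y$ by $\tilde{f}(A)=y_A$, where $y_A$ is the unique point of $Y$ satisfying $\overline{\{y_A\}}=\overline{f(A)}$. This is well defined: Lemma \ref{map}(1) gives $f(A)\in \irr(Y)$, hence $\overline{f(A)}\in \Irr(Y)$, and sobriety of $Y$ supplies the unique $y_A$. The identity $\tilde{f}\circ \eta_X=f$ reduces to the continuity fact $\overline{f(\overline{\{x\}})}=\overline{\{f(x)\}}$. For continuity of $\tilde{f}$, I would verify $\tilde{f}^{-1}(V)=\Diamond_{\Irr(X)}f^{-1}(V)$ for each $V\in\op(Y)$ by the chain of equivalences $y_A\in V \iff \overline{\{y_A\}}\cap V\neq\emptyset \iff \overline{f(A)}\cap V\neq\emptyset \iff f(A)\cap V\neq\emptyset \iff A\cap f^{-1}(V)\neq\emptyset$, using that $V$ is upward closed under specialization in $Y$.

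Uniqueness of $\tilde{f}$ follows from the sobriety of $Y$ combined with the specialization relation $\eta_X(x)\leqslant A$ iff $x\in A$ in $X^s$: any continuous $g$ with $g\circ \eta_X=f$ must map each $A\in X^s$ to a point whose closure equals $\overline{f(A)}$, hence to $y_A=\tilde{f}(A)$. The main obstacle I anticipate is the continuity calculation, specifically the step from $A\cap f^{-1}(V)\neq\emptyset$ back to $y_A\in V$; this genuinely relies on the sobriety of $Y$ and the construction of $y_A$ as the generic point of $\overline{f(A)}$, and is precisely where the hypothesis on $Y$ is needed.
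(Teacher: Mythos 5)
Your argument is correct and is essentially the standard textbook proof of this fact; the paper itself offers no proof, citing it directly from Goubault-Larrecq, so there is nothing to diverge from. Both halves of your sketch are sound: the coframe isomorphism $B\mapsto\Box_{\Irr(X)}B$ from Lemma \ref{embedding}(1) does identify the irreducible closed subsets of $X^s$ with $\Irr(X)$ and exhibits each $\Box_{\Irr(X)}A$ as $\cl_{X^s}(\{A\})$, and the verification $\tilde{f}^{-1}(V)=\Diamond_{\Irr(X)}f^{-1}(V)$ together with the specialization-order argument for uniqueness completes the universal property.
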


\begin{definition}(\cite{shen_well-filtered_2019})
	Let $X$ be a $T_0$ space. A \emph{well-filtered reflection} of $X$ is a pair $\langle \widetilde{X}, \mu\rangle$ consisting of a well-filtered space $\widetilde{X}$ and a continuous mapping $\mu: X\longrightarrow \widetilde{X}$ satisfying that for any continuous mapping $f: X\longrightarrow Y$ to a well-filtered space, there exists a unique continuous mapping $\tilde{f}: X\longrightarrow \widetilde{X}$ such that $\tilde{f} \circ \mu=f$, that is, the following diagram commutes.
	\begin{figure}[H]
		\centering
		\tikzstyle{format}=[rectangle,draw=white,thin,fill=white]
		\tikzstyle{test}=[diamond,aspect=2,draw,thin]
		\tikzstyle{point}=[coordinate,on grid,]
		\begin{tikzpicture}
			\node[format] (X){$X$};
			\node[format,right of=X,node distance=20mm] (Xs){$\widetilde{X}$};
			\node[format,below of=Xs,node distance=15mm] (Y){$Y$};
			\draw[->] (X.east)--node[above]{$\mu$}(Xs.west);
			\draw[->] (X)--node[below]{$f$}(Y);
			\draw[dotted,->] (Xs)--node[right]{$\tilde{f}$}(Y);
		\end{tikzpicture}
	\end{figure}
\end{definition}
There are two well-filtered reflections of a $T_0$ space which are respectively constructed  in \cite{shen_well-filtered_2019,xu_t0_2020}.
\begin{lemma}(\cite{shen_well-filtered_2019})\label{shen}
	For any $T_0$ space $Z_0$, let $Z$ be a sober space that has $Z_0$ as a subspace. For each ordinal $\beta$, define
	\begin{enumerate}[(i)]
		\item $Z_{\beta+1}=\{z\in Z: \exists F\in \KF(Z_{\beta}), \cl_{Z}(F)=\da_{Z}z\}$;
		\item $Z_{\beta}=\bigcup_{\gamma<\beta}Z_{\gamma}$ for a limit ordinal $\beta$.
	\end{enumerate}
	Then there exists an ordinal $\alpha$ such that $Z_{\alpha}=Z_{\alpha+1}$ and $\{Z_{\beta}:\beta\leqslant \alpha\}$ is an increasing transfinite sequence. In addition, $Z_{\alpha}$ is the well-filtered reflection of $Z_0$.
\end{lemma}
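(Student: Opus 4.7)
The plan is to verify three claims in sequence: the transfinite sequence $\{Z_\beta\}$ is increasing, it eventually stabilizes, and the stable value $Z_\alpha$ satisfies the universal property of the well-filtered reflection of $Z_0$. For monotonicity, given $z \in Z_\beta$ I would take $F = \cl_{Z_\beta}(\{z\})$: the singleton family $\{\ua_{Z_\beta} z\} \subseteq Q(Z_\beta)$ is trivially filtered and $F$ is its minimal hitting closed set, so $F \in \KF(Z_\beta)$; since $\cl_Z(F) = \da_Z z$, the successor clause forces $z \in Z_{\beta+1}$, while at limit steps monotonicity is automatic. Because each $Z_\beta$ is contained in the fixed set $Z$, the ascending chain must terminate at some ordinal $\alpha$ bounded by $|Z|^+$.

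For the well-filteredness of $Z_\alpha$, Lemma \ref{wf} reduces the task to proving $\KF(Z_\alpha) = \si(Z_\alpha)$. Given $F \in \KF(Z_\alpha)$, Lemma \ref{subset} yields $F \in \Irr(Z_\alpha)$, and then Lemma \ref{irrset} gives $\cl_Z(F) \in \Irr(Z)$. Since $Z$ is sober, there is a unique $z \in Z$ with $\cl_Z(F) = \da_Z z$; but by the defining clause of $Z_{\alpha+1}$ we must have $z \in Z_{\alpha+1} = Z_\alpha$. Two applications of Lemma \ref{closure} then yield $F = \cl_Z(F) \cap Z_\alpha = \da_Z z \cap Z_\alpha = \cl_{Z_\alpha}(\{z\}) \in \si(Z_\alpha)$.

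The main obstacle is the universal property. Given a continuous $f: Z_0 \to Y$ into a well-filtered $Y$, I would construct $\tilde f : Z_\alpha \to Y$ by transfinite recursion: start with $\tilde f|_{Z_0} = f$ and, for each new point $z \in Z_{\beta+1}\setminus Z_\beta$, choose any witness $F \in \KF(Z_\beta)$ with $\cl_Z(F) = \da_Z z$; by the inductive continuity of $\tilde f|_{Z_\beta}$ and Lemma \ref{map}(2), the image $\tilde f(F)$ is a KF-set of $Y$, so Lemma \ref{wf} furnishes a unique $y \in Y$ with $\cl_Y(\tilde f(F)) = \da_Y y$, and we set $\tilde f(z) = y$. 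Three subtle points will demand care: (i) \emph{well-definedness} independent of the witness $F$, which should follow because any two such $F$ share the same sober hull $\da_Z z$ and hence produce images whose closures in $Y$ coincide; (ii) \emph{continuity} of $\tilde f$ through successor and limit stages, which reduces to checking that preimages of closed sets of $Y$ pull back compatibly along the KF-to-closure correspondence; and (iii) \emph{uniqueness}, which is forced because at each successor stage the value is pinned down by continuity together with the well-filteredness of $Y$.
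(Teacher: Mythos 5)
The paper does not prove this lemma; it is quoted verbatim from \cite{shen_well-filtered_2019}, so there is no internal proof to compare against. Your reconstruction follows exactly the route of that reference and is correct in outline: monotonicity via $\si(Z_\beta)\subseteq\KF(Z_\beta)$ (Lemma \ref{subset}), stabilization by a cardinality bound, well-filteredness of $Z_\alpha$ from sobriety of $Z$ together with Lemmas \ref{wf}, \ref{irrset} and \ref{closure}, and the universal property by transfinite extension of $f$. Two remarks on the part you left as a sketch. First, your point (i) is immediate rather than subtle: any witness $F\in\KF(Z_\beta)$ with $\cl_Z(F)=\da_Z z$ equals $\da_Z z\cap Z_\beta$ by Lemma \ref{closure}, so the witness is literally unique, not merely closure-equivalent. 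Second, for (ii) the clean invariant to carry through the induction is that $\tilde f_\beta^{-1}(B)=\cl_Z\bigl(f^{-1}(B)\bigr)\cap Z_\beta$ for every closed $B\subseteq Y$: writing $F_z=\da_Z z\cap Z_\beta$, one checks for $z\in Z_{\beta+1}\setminus Z_\beta$ that $\tilde f_{\beta+1}(z)\in B$ iff $\cl_Y(\tilde f_\beta(F_z))\subseteq B$ iff $F_z\subseteq \tilde f_\beta^{-1}(B)$ iff $z\in\cl_Z\bigl(f^{-1}(B)\bigr)$, which preserves the invariant at successor stages, and limit stages are handled by unions; uniqueness then follows as you indicate, since for any continuous competitor $g$ one gets $g(z)\leqslant y$ from $z\in\cl_{Z_{\beta+1}}(F_z)$ and $y\leqslant g(z)$ from $F_z\subseteq\cl_{Z_{\beta+1}}(\{z\})$. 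With these details supplied your argument is a complete proof.
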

	\begin{lemma}(\cite{xu_t0_2020})\label{completion2}
		Let $X$ be a $T_0$ space.
		The space $X^w=P_H(\WD(X))$ with the canonical mapping $\eta_{X} : X \longrightarrow X^w$ is the well-filtered reflection of $X$.
	\end{lemma}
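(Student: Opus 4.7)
The plan is to construct the universal extension explicitly, exploiting that $\WD(Y)$ collapses to $\si(Y)$ in any well-filtered target $Y$, and then to bootstrap this construction back to $X^w$ itself to obtain its well-filteredness. First I would record the basic structure of $X^w$. Since $\si(X)\subseteq\WD(X)\subseteq\Irr(X)$ by Lemma \ref{subset}, Lemma \ref{embedding} applies with $\G=\WD(X)$: $X^w$ is $T_0$, its closed sets are exactly the sets $\Box_{\WD(X)}A$ for $A$ closed in $X$, the specialization order on $X^w$ is set inclusion, and $\eta_X$ is both an order and a topological embedding. Moreover, for any closed $A\subseteq X$ the identity $\overline{\eta_X(A)}=\Box_{\WD(X)}A$ holds, because $\Box_{\WD(X)}B\supseteq\eta_X(A)$ is equivalent to $B\supseteq A$.

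Given a continuous $f:X\longrightarrow Y$ with $Y$ well-filtered, I would define $\tilde{f}:X^w\longrightarrow Y$ by sending $W\in\WD(X)$ to the unique $y_W\in Y$ satisfying $\overline{f(W)}=\overline{\{y_W\}}$, which exists by Lemmas \ref{map2} and \ref{wf}. Continuity of $\tilde{f}$ reduces to the identity $\tilde{f}^{-1}(V)=\Diamond_{\WD(X)}f^{-1}(V)$ for $V\in\op(Y)$; this follows from the chain $y_W\in V \iff \overline{\{y_W\}}\cap V\neq\emptyset \iff f(W)\cap V\neq\emptyset$, using that $V$ is open. Commutativity $\tilde{f}\circ\eta_X=f$ follows at once from $\overline{f(\overline{\{x\}})}=\overline{\{f(x)\}}$. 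For uniqueness, any continuous $g:X^w\longrightarrow Y$ with $g\circ\eta_X=f$ is monotone in the specialization order, so for $W\in\WD(X)$ and $x\in W$ one gets $f(x)=g(\eta_X(x))\leqslant_Y g(W)$, yielding $\overline{f(W)}\subseteq\overline{\{g(W)\}}$; conversely, a subbasic-neighbourhood argument applied to continuity of $g$ places $g(W)\in\overline{f(W)}=\overline{\{y_W\}}$. The $T_0$ axiom on $Y$ then forces $g(W)=y_W$.

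Finally, to see that $X^w$ is well-filtered, by Lemma \ref{wf} it suffices to show $\WD(X^w)\subseteq\si(X^w)$. A closed WD subset of $X^w$ has the form $\Box_{\WD(X)}A$ for some closed $A\subseteq X$; I would verify $A\in\WD(X)$ as follows. Given any continuous $g:X\longrightarrow Z$ to a well-filtered $Z$, form the extension $\tilde{g}:X^w\longrightarrow Z$ built above. Since $\Box_{\WD(X)}A=\overline{\eta_X(A)}$ and $\tilde{g}\circ\eta_X=g$, continuity of $\tilde{g}$ gives $\overline{\tilde{g}(\Box_{\WD(X)}A)}=\overline{g(A)}$; Lemma \ref{map2} then supplies $\tilde{g}(\Box_{\WD(X)}A)\in\wds(Z)$, and well-filteredness of $Z$ together with Lemma \ref{wf} produces the unique $z\in Z$ with $\overline{g(A)}=\overline{\{z\}}$. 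Hence $A\in\WD(X)$ and $\Box_{\WD(X)}A=\overline{\{A\}}\in\si(X^w)$. The main obstacle I expect is precisely this last bootstrap: one must carefully set up $\tilde{g}$ intrinsically (it is defined directly from the formula $\overline{g(W)}=\overline{\{y_W\}}$, not via the universal property under proof) and confirm that the closure identity $\Box_{\WD(X)}A=\overline{\eta_X(A)}$ is preserved by $\tilde{g}$; once these points are in place, the remainder is routine application of Lemmas \ref{embedding}, \ref{map2}, and \ref{wf}.
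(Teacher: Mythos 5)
This lemma is imported from \cite{xu_t0_2020}; the paper states it with a citation and gives no proof of its own, so there is nothing internal to compare your argument against. Judged on its merits, your reconstruction is correct and follows essentially the route of the cited source: the explicit formula $\tilde f(W)=y_W$ with $\overline{f(W)}=\overline{\{y_W\}}$ (well defined directly from the definition of a WD set, since $Y$ is well-filtered), the computation $\tilde f^{-1}(V)=\Diamond_{\WD(X)}f^{-1}(V)$ for continuity, and the bootstrap $\WD(X^w)\subseteq\si(X^w)$ combined with Lemma \ref{wf} to get well-filteredness of $X^w$. Two points deserve to be made explicit when writing this up: in the uniqueness step, placing $g(W)$ in $\overline{f(W)}$ requires the irreducibility of $W$ (to pass from $W$ meeting each $U_i$ in a basic open $\bigcap_i\Diamond_{\WD(X)}U_i$ to $W$ meeting $\bigcap_i U_i$), which holds since $\WD(X)\subseteq\Irr(X)$ by Lemma \ref{subset}; and the bootstrap is indeed non-circular, because the construction and continuity of $\tilde g$ use only that $Z$ is well-filtered, never that $X^w$ is.
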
	
	
	Let $X$ be a $T_1$ space and $\pp$ the Xi-Zhao model of $X$.
We will construct the sobrifications and the well-filtered reflections of $\Max(\pp)$ and  $\pp$.

By Lemma \ref{completion1}, the space $\Max(\pp)^s=P_H(\Irr(\Max(\pp)))$ with the canonical mapping $\eta_{\Max(\pp)} : \Max(\pp) \longrightarrow \Max(\pp)^s$ is the sobrification of $X$.
	The space ${\pp}^s=P_H(\Irr(\pp))$ with the canonical mapping $\eta_{\pp} : \Sigma\pp \longrightarrow \pp^s$ is the sobrification of $\Sigma \pp$.
	By Lemma \ref{embedding}, the space $P_H(\Irr(\pp))$ has the topology $\{\Diamond_{\pp^s}U:U\in \sigma(\pp)\}$, where $$\Diamond_{\pp^s}U=\{A\in \Irr(\pp):A\cap U\neq\emptyset\},$$ and the closed subsets are exactly the set of forms $\Box_{\pp^s}A=\da_{\pp^s}A$ for a Scott closed subset $A$ of $\pp$.
	Similarly, the space $P_H(\Irr(\Max(\pp)))$ has the topology $\{\Diamond_{\Max(\pp)^s}(U\cap\Max(\pp)):~U\in \sigma(\pp)\}$, where $$\Diamond_{\Max(\pp)^s}(U\cap\Max(\pp))=\{A\in \Irr(\Max(\pp)):A\cap U\neq\emptyset\},$$  and the closed subsets are exactly the set of forms $\Box_{\Max(\pp)^s}(A\cap\Max(\pp))=\da_{\Max(\pp)^s}(A\cap\Max(\pp))$ for a Scott closed subset $A$ of $\pp$.
	
	Obviously, the inclusion mapping $i: \Max(\pp)\longrightarrow \Sigma\pp$ is continuous. We define a mapping $j: P_H(\Irr(\Max(\pp)))\longrightarrow P_H(\Irr(\pp))$ by
	$$\forall A\in \Irr(\Max(\pp)), j(A)=\cl_{\pp}(A).$$
	By Lemma \ref{map}, $j$ is well-defined.
	There are some properties of the map $j$ as follows.
\begin{proposition}\label{prop:embed}
  Let $P$ be a bounded complete algebraic poset and the map $j: \Max(\pp)^s \longrightarrow \pp^s$ by $j(A)=\cl_{\pp}(A)$.
  Then the following statements hold.
  \begin{itemize}
    \item[(1)] $j$ is a topological embedding;\label{j1}
    \item[(2)] $j\circ \eta_{\Max(\pp)}=\eta_{\pp}\circ i$, that is, the following diagram  commutes;\label{j2}
	\begin{figure}[htbp]
		\centering
		\tikzstyle{format}=[rectangle,draw=white,thin,fill=white]		
		\tikzstyle{test}=[diamond,aspect=2,draw,thin]
		\tikzstyle{point}=[coordinate,on grid,]
		\begin{tikzpicture}
			\node[format] (maxp){$\Max(\pp)$};
			\node[format,right of=maxp,node distance=40mm] (maxps){$P_H(\Irr(\Max(\pp)))$};
			\node[format,below of=maxp,node distance=20mm] (P){$\Sigma\pp$};
			\node[format,right of=P,node distance=40mm] (Ps){$P_H(\Irr(\pp))$};
			\draw[->] (maxp.east)--node[above]{$\eta_{\Max(\pp)}$}(maxps.west);
			\draw[->] (maxp)--node[left]{$i$}(P);
			\draw[->] (P.east)--node[below]{$\eta_{\pp}$}(Ps.west);
			\draw[->] (maxps)--node[right]{$j$}(Ps);
		\end{tikzpicture}
	\end{figure}
    \item[(3)] $j(\Irr(\Max(\pp))=\ua_{\Irr(\pp)}\eta_{\pp}(\Max(\pp))$;\label{irr2}
    \item[(4)] The inverse map $j^{-1}: \ua_{\Irr(\pp)}\eta_{\pp}(\Max(\pp))\longrightarrow \Irr(\Max(\pp))$ is given by
		$$j^{-1}(A)=A\cap \Max(\pp).$$
  \end{itemize}
\end{proposition}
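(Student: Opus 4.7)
The plan is to treat the four parts in sequence, with (3) and (4) flowing quickly from the structural results already established in Section 3.

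For (1), I would first establish injectivity: if $\cl_{\pp}(A_1)=\cl_{\pp}(A_2)$ for $A_1,A_2\in\Irr(\Max(\pp))$, then intersecting both sides with $\Max(\pp)$ and invoking Lemma \ref{closure} forces $A_1=A_2$. For continuity and the embedding property, the decisive observation is that whenever $A\subseteq\Max(\pp)$ and $U\in\sigma(\pp)$, one has $A\cap U\neq\emptyset$ iff $\cl_{\pp}(A)\cap U\neq\emptyset$; the nontrivial direction follows because $U$ is open in $\pp$ and $A$ is dense in $\cl_{\pp}(A)$. This yields both $j^{-1}(\Diamond_{\pp^s}U)=\Diamond_{\Max(\pp)^s}(U\cap\Max(\pp))$, giving continuity, and the reverse identity $j(\Diamond_{\Max(\pp)^s}(U\cap\Max(\pp)))=\Diamond_{\pp^s}U\cap j(\Max(\pp)^s)$, showing $j$ is a homeomorphism onto its image.

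For (2), I would evaluate both composites pointwise at $e\in\Max(\pp)$. Since $\Sigma\pp$ carries the Scott topology, $\eta_{\pp}(i(e))=\overline{\{e\}}^{\Sigma\pp}=\da e$. On the other side, any two distinct maximal elements of $\pp$ are incomparable, so $\Max(\pp)$ is $T_1$, whence $\eta_{\Max(\pp)}(e)=\{e\}$ and $j(\{e\})=\cl_{\pp}(\{e\})=\da e$. The two values coincide, so the square commutes.

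For (3) and (4), the first step is to rewrite the target set. By Lemma \ref{embedding}(2) the specialization order on $\pp^s=P_H(\Irr(\pp))$ is set inclusion, so
\[
\ua_{\Irr(\pp)}\eta_{\pp}(\Max(\pp))=\{B\in\Irr(\pp):B\supseteq\da e\text{ for some }e\in\Max(\pp)\}=\{B\in\Irr(\pp):B\cap\Max(\pp)\neq\emptyset\}.
\]
The inclusion $j(\Irr(\Max(\pp)))\subseteq\{B:B\cap\Max(\pp)\neq\emptyset\}$ is immediate since $\emptyset\neq A\subseteq j(A)\cap\Max(\pp)$ for every $A\in\Irr(\Max(\pp))$. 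The reverse direction is delivered by Lemma \ref{irr}(2): any $B\in\Irr(\pp)$ with $B^{\ast}:=B\cap\Max(\pp)\neq\emptyset$ satisfies $B^{\ast}\in\Irr(\Max(\pp))$ and $B=\cl_{\pp}(B^{\ast})=j(B^{\ast})$. Item (4) then follows, since the injectivity from (1) forces $j^{-1}(B)=B^{\ast}$.

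The main obstacle is the openness half of (1), that is, verifying $j$ is genuinely an embedding rather than merely a continuous injection; everything else reduces to bookkeeping once the density identity $A\cap U\neq\emptyset \Leftrightarrow \cl_{\pp}(A)\cap U\neq\emptyset$ is established.
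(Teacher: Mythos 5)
Your proposal is correct and follows essentially the same route as the paper: injectivity via intersecting closures with $\Max(\pp)$ (Lemma \ref{closure}), the identity $j^{-1}(\Diamond_{\pp^s}U)=\Diamond_{\Max(\pp)^s}(U\cap\Max(\pp))$ for continuity and openness onto the image, the pointwise computation $j(\{(e,e)\})=\da(e,e)=\eta_{\pp}((e,e))$ for (2), and Lemma \ref{irr}(2) (equivalently Corollary \ref{equation1}) for (3) and (4). You merely make explicit two steps the paper leaves implicit, namely the density equivalence $A\cap U\neq\emptyset\Leftrightarrow\cl_{\pp}(A)\cap U\neq\emptyset$ and the "almost open" half of the embedding.
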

\begin{proof}
  (1) 	For $A_1\neq A_2$ in $\Irr(\Max(\pp))$, we have
	$\cl_{\pp}(A_1)\cap\Max(\pp)\neq \cl_{\pp}(A_2)\cap\Max(\pp).$
	It follows that $j(A_1)\neq j(A_2)$, i.e., $j$ is injective. For $U\in \sigma(\pp)$, we have
	\begin{align*}
		j^{-1}(\Diamond_{\pp^s}U) &=\{A\in\Irr(\Max(\pp)): \cl_{\pp}(A)\cap U\neq \emptyset\} \\
		& =\{A\in\Irr(\Max(\pp)): A\cap U\neq \emptyset\}\\
		&=\Diamond_{\Max(\pp)^s}(U\cap\Max(\pp)).
	\end{align*}
	So $j$ is continuous and almost open.

(2) 	Let $(e,e)\in\Max(\pp)$. We have
	$$j\circ \eta_{\Max(\pp)}((e,e))=j(\{(e,e)\})=\da (e,e)=\eta_{\pp}((e,e))=\eta_{\pp}\circ i((e,e)).$$

(3) 		By Corollary \ref{equation1}, we have
	\begin{align*}
		j(\Irr(\Max(\pp)) & =\{\cl_{\pp}(A):A\in \Irr (\Max (\pp))\} \\
		& =\{A\in\Irr(\pp):A\cap \Max(\pp)\neq\emptyset\}\\
		& =\{A\in\Irr(\pp): \exists (e,e)\in\Max(\pp), s.t. ~\eta_{\pp}((e,e))=\da(e,e) \subseteq A\}\\
		&=\ua_{\Irr(\pp)}\eta_{\pp}(\Max(\pp)).
	\end{align*}	

(4) It is trivial from Lemma \ref{irr}.
\end{proof}

	Immediately,  we can obtain the following results.
	\begin{corollary}\label{embedsober}
		For a $T_1$ space $X$ and its Xi-Zhao model $\pp$, $X^s$ can be topologically embedded into $\pp^s$ as a saturated subspace.
	\end{corollary}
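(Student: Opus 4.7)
The plan is to chain together the homeomorphism $X \cong \Max(\pp)$ (from Lemma \ref{dcpomodel2}) with the embedding $j$ produced in Proposition \ref{prop:embed}. First I would note that the assignment $X \mapsto X^s = P_H(\Irr(X))$ is functorial: a homeomorphism $h : X \to \Max(\pp)$ induces a bijection $\Irr(X) \to \Irr(\Max(\pp))$ via direct image, and this bijection carries the subbasic open sets $\Diamond_{X^s} U$ to $\Diamond_{\Max(\pp)^s} h(U)$, yielding a homeomorphism $h^s : X^s \to \Max(\pp)^s$. Composing with $j : \Max(\pp)^s \to \pp^s$ from Proposition \ref{prop:embed}(1), which is already known to be a topological embedding, gives a topological embedding $j \circ h^s : X^s \to \pp^s$.

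It then remains to check that the image is saturated in $\pp^s$. By Lemma \ref{embedding}(2), the specialization order on $\pp^s = P_H(\Irr(\pp))$ coincides with set inclusion, so ``saturated'' here means ``upper set under inclusion.'' Part (3) of Proposition \ref{prop:embed} identifies the image of $j$ with
\[
\ua_{\Irr(\pp)} \eta_{\pp}(\Max(\pp)) = \{A \in \Irr(\pp) : \exists (e,e) \in \Max(\pp),\ \da(e,e) \subseteq A\},
\]
which is manifestly an upper set of $(\Irr(\pp),\subseteq)$. Since $h^s$ is a bijection, the image of $j \circ h^s$ coincides with the image of $j$, which is therefore saturated in $\pp^s$.

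There is no genuine obstacle: Proposition \ref{prop:embed} already carries the analytic content, and the corollary is essentially a matter of transporting it across the homeomorphism $X \cong \Max(\pp)$. The only thing to be slightly careful about is verifying that the naturality square for sobrifications allows us to identify $X^s$ with $\Max(\pp)^s$ at the topological level, which is routine from the explicit description of the topology on $P_H(\Irr(-))$ given in Lemma \ref{embedding}(1).
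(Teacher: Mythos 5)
Your proposal is correct and is essentially the paper's own argument: the corollary is stated as an immediate consequence of Proposition \ref{prop:embed}, obtained by transporting the embedding $j$ across the homeomorphism $X\cong\Max(\pp)$ and noting that the image $\ua_{\Irr(\pp)}\eta_{\pp}(\Max(\pp))$ is an upper set for the specialization (inclusion) order, hence saturated. No further comment is needed.
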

	\begin{corollary}\label{equation0}
		Let $P$ be a bounded complete algebraic poset. Then
		$$\Irr(\pp)=\ua_{\Irr(\pp)}\eta_{\pp}(\Max(\pp))\cup \eta_{\pp}(\pp\backslash \Max (\pp)).$$
	\end{corollary}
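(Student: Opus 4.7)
The plan is to observe that this corollary is essentially a repackaging of the first equation in Corollary~\ref{equation1} using the notation of the sobrification $\pp^s$. First I would unpack what the two pieces on the right-hand side mean concretely. By definition of the canonical mapping, $\eta_{\pp}((x,e)) = \overline{\{(x,e)\}} = \da(x,e)$ in $\Sigma\pp$, so
\[
\eta_{\pp}(\pp\backslash\Max(\pp)) = \{\da(x,e) : (x,e)\in \pp\backslash\Max(\pp)\}.
\]
For the other piece, the specialization order on $P_H(\Irr(\pp))$ is set inclusion by Lemma~\ref{embedding}(2), so
\[
\ua_{\Irr(\pp)}\eta_{\pp}(\Max(\pp)) = \{A\in\Irr(\pp) : \exists (e,e)\in\Max(\pp),\ \da(e,e)\subseteq A\} = \{A\in\Irr(\pp) : A\cap\Max(\pp)\neq\emptyset\}.
\]

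Next I would invoke Proposition~\ref{prop:embed}(3), which already states that
\[
\ua_{\Irr(\pp)}\eta_{\pp}(\Max(\pp)) = j(\Irr(\Max(\pp))) = \{\cl_{\pp}(A) : A\in\Irr(\Max(\pp))\}.
\]
Combining this with the displayed identification of $\eta_{\pp}(\pp\backslash\Max(\pp))$, the right-hand side of the claimed equation becomes
\[
\{\cl_{\pp}(A) : A\in\Irr(\Max(\pp))\} \cup \{\da(x,e) : (x,e)\in\pp\backslash\Max(\pp)\},
\]
which is precisely the first equation of Corollary~\ref{equation1}, and this equals $\Irr(\pp)$.

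There is no real obstacle here; the corollary is just a translation of previously established material into the $\eta_{\pp}$-notation that will be convenient for the subsequent development of well-filtered reflections. The only minor bookkeeping point worth stating explicitly is that the union is in general not disjoint (an irreducible closed set of the form $\da(x,e)$ with $(x,e)\notin\Max(\pp)$ may still meet $\Max(\pp)$), but since the claim is only a set equality this does not affect the argument.
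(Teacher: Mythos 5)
Your proof is correct and follows the same route the paper intends: the corollary appears there without an explicit proof, being flagged as an immediate consequence of Proposition~\ref{prop:embed}(3) together with Corollary~\ref{equation1}, which are exactly the two facts you combine after identifying $\eta_{\pp}((x,e))$ with $\da(x,e)$ and unwinding $\ua_{\Irr(\pp)}$ via the inclusion order. One small correction to your closing remark: the union is in fact disjoint, because for $(x,e)\notin\Max(\pp)$ the definition of the order on $\pp$ gives $\da(x,e)=\{(y,e):y\leqslant_{P}x\}$, and $(e,e)\in\da(x,e)$ would force $e\leqslant_{P}x\leqslant_{P}e$, i.e.\ $(x,e)\in\Max(\pp)$; as you note, though, this point is immaterial to the set equality.
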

The above equation reveals that the pair $\langle P_H (\Irr(\pp)),\eta_{\pp}\rangle$ satisfies (P3).
Here are some examples that satisfy (P1) $\sim$ (P3).
	
	\begin{lemma}\label{p123}
		Let $P$ be a bounded complete algebraic poset.
		If $\si(\pp)\subseteq \G\subseteq\Irr(\pp)$, then the pair $\langle P_H (\G),\eta_{\pp}\rangle$ satisfies (P1)$\sim$(P3).
	\end{lemma}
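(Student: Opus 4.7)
The plan is to verify (P1)--(P3) in turn by combining Corollary \ref{equation0}, Lemma \ref{irr}, and Lemma \ref{embedding}. Property (P1) will be immediate from Lemma \ref{embedding}(2): since $\Sigma\pp$ is $T_0$ and $\si(\pp)\subseteq\G\subseteq\Irr(\pp)$, the canonical map $\eta_{\pp}:\Sigma\pp\longrightarrow P_H(\G)$ is automatically both an order and a topological embedding.

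For (P2), I will intersect the decomposition $\Irr(\pp)=\ua_{\Irr(\pp)}\eta_{\pp}(\Max(\pp))\cup\eta_{\pp}(\pp\backslash\Max(\pp))$ supplied by Corollary \ref{equation0} with $\G$. Because the specialization order on $P_H(\G)$ is set inclusion (Lemma \ref{embedding}(2)) and $\si(\pp)\subseteq\G$ forces $\eta_{\pp}(\pp)\subseteq\G$, this gives $\G=\ua_{P_H(\G)}\eta_{\pp}(\Max(\pp))\cup\eta_{\pp}(\pp\backslash\Max(\pp))$, which is the first clause. For the lower-set clause, I would take $B\in\G$ with $B\subseteq\eta_{\pp}((x,e))=\da(x,e)$ and split on whether $(x,e)$ is maximal. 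When $(x,e)\notin\Max(\pp)$, the order definition in Lemma \ref{dcpomodel} forces $\da(x,e)\cap\Max(\pp)=\emptyset$, so $B\cap\Max(\pp)=\emptyset$ and Lemma \ref{irr}(1) yields $B=\da(y,d)\in\eta_{\pp}(\pp)$; when $(x,e)=(e,e)$ is maximal, $\da(e,e)\cap\Max(\pp)=\{(e,e)\}$, so either $B\cap\Max(\pp)=\emptyset$ and Lemma \ref{irr}(1) applies, or $B\cap\Max(\pp)=\{(e,e)\}$ and Lemma \ref{irr}(2) gives $B=\cl_{\pp}\{(e,e)\}=\da(e,e)\in\eta_{\pp}(\pp)$.

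For (P3), take $C$ Scott closed in $\pp$ with $C\subseteq\pp\backslash\Max(\pp)$, which is the sense in which the relevant sets are produced by Lemma \ref{scottclosed} and fed into (P3) elsewhere in the paper. My claim is $\eta_{\pp}(C)=\Box_{\G}C$, a subset that is automatically closed in $P_H(\G)$ by Lemma \ref{embedding}(1). The inclusion $\eta_{\pp}(C)\subseteq\Box_{\G}C$ needs only that $C$ is a lower set and $\da(x,e)\in\si(\pp)\subseteq\G$ for every $(x,e)\in C$. The reverse inclusion is where Lemma \ref{irr}(1) does the actual work: any $B\in\G$ with $B\subseteq C\subseteq\pp\backslash\Max(\pp)$ satisfies $B\cap\Max(\pp)=\emptyset$, so $B$ must be a principal ideal $\da(x,e)$ with $(x,e)\in B\subseteq C$, hence $B\in\eta_{\pp}(C)$. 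The only mildly subtle step in the argument is the case analysis in (P2), and it hinges on the small observation that no maximal element of $\pp$ lies below a non-maximal one.
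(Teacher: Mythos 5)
Your argument is correct and follows essentially the same route as the paper's own proof: (P1) from Lemma \ref{embedding}, (P2) by intersecting the decomposition of Corollary \ref{equation0} with $\G$ and using Lemma \ref{irr} to see that every member of $\Irr(\pp)$ contained in a principal ideal $\da(x,e)$ is itself a principal ideal, and (P3) by identifying $\eta_{\pp}(C)$ with $\Box_{\G}C$ via Lemma \ref{irr}(1) and invoking Lemma \ref{embedding}(1). Your explicit case split on whether $(x,e)$ is maximal, resting on the observation that no maximal element of $\pp$ lies below a non-maximal one, just spells out what the paper leaves implicit.
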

	\begin{proof}
		By Lemma \ref{embedding}, the topology of the space $P_H({\G})$ is precisely the relative topology from $P_H(\Irr(\pp))$.
		That is, $\eta_{\pp}:\Sigma\pp\longrightarrow P_H({\G})$ is a topological embedding.
		
		Note that $\eta_{\pp}(\Max(\pp)), \eta_{\pp}(\pp\backslash \Max (\pp))\subseteq \G.$
		By Corollary \ref{equation0}, we have
		\begin{align*}
			\G =\G\cap\Irr(\pp) & = (\ua_{\Irr(\pp)}\eta_{\pp}(\Max(\pp))\cap \G)\cup (\eta_{\pp}(\pp\backslash \Max (\pp))\cap \G)\\
			& =\ua_{\G}\eta_{\pp}(\Max(\pp))\cup \eta_{\pp}(\pp\backslash \Max (\pp)).
		\end{align*}
		Let  $\da (x,e)\in \eta_{\pp}(\pp)$. Suppose that $A$ is a subset of $\da (x,e)$ in $\Irr(\pp)$.
		According to Lemma \ref{irr}, there exists $(y,d)\in \pp$ such that $\da (y,d)\subseteq \da (x,e)$.
		Then $\da (y,d)\in \eta_{\pp}(\pp)$.
		It follows that $\eta_{\pp}(\pp)$ is a lower set in $\Irr(\pp)$.
		So $\eta_{\pp}(\pp)$ is a lower set in $\G$.
		
		Let $C$ be a Scott closed subset of $\pp\backslash \Max (\pp)$.
		Obviously, $C$ is also Scott closed in $\pp$.
		By Lemma \ref{irr}, we have
		\begin{align*}
			\eta_{\pp}(C) &=\{\da (x,e):(x,e)\in C\} \\
			&=\{B\in\Irr(\pp): B\subseteq C \subseteq \pp\backslash \Max (\pp)\}\\
			& =\da_{\pp^s}C.
		\end{align*}
		Note that $\da_{\pp^s}C\subseteq \si(\pp)\subseteq \G\subseteq\Irr(\pp)$.
		It implies that $\eta_{\pp}(C)=\da_{\pp^s}C=\da_{\G}C$.
		So $\eta_{\pp}(C)$ is closed in $P_H(\G)$.
	\end{proof}	

In the following, we will construct the well-filtered reflections of $\Max(\pp)$ and  $\pp$, according to Lemma \ref{shen}.
	
Denote
	\begin{align*}
		X_0 & =\eta_{\Max(\pp)}(\Max(\pp))=\{\{(e,e)\}:e\in\Max(P)\},\\
		Y_0& =\eta_{\pp}(\pp)=\{\da (x,e): (x,e)\in \pp\}.
	\end{align*}
	For each ordinal $\beta$, define
	\begin{enumerate}[(i)]
		\item $X_{\beta+1}=\{A\in \Max(\pp)^s: \exists \A\in \KF(X_{\beta}), ~{\rm s.t.}~ \cl_{\Max(\pp)^s}(\A)=\da_{\Max(\pp)^s} A\}$;
		\item $X_{\beta}=\bigcup_{\gamma<\beta}X_{\gamma}$ for a limit ordinal $\beta$;
		\item $Y_{\beta+1}=\{B\in \pp^s: \exists \B\in \KF(Y_{\beta}), ~{\rm s.t.}~\cl_{\pp^s}(\B)=\da_{\pp^s}B\}$;
		\item $Y_{\beta}=\bigcup_{\gamma<\beta}Y_{\gamma}$ for a limit ordinal $\beta$.
	\end{enumerate}
	
	\begin{claim}\label{p1234}
		For each ordinal $\beta$, the pair $\langle  P_H(Y_{\beta}),\eta_{\pp}\rangle$ satisfies the following properties:
		\begin{itemize}
			\item[(P1)] $\eta_{\pp}:\Sigma\pp\longrightarrow P_H(Y_{\beta})$ is a topological embedding;
			\item[(P2)] $Y_{\beta}=\ua_{Y_{\beta}}\eta_{\pp}(\Max(\pp))\cup \eta_{\pp}(\pp\backslash \Max (\pp))$ and $\eta_{\pp}(\pp)$ is a lower set in $Y_{\beta}$;
			\item[(P3)] if $C$ is Scott closed in $\pp\backslash \Max (\pp)$, then $\eta_{\pp}(C)$ is closed in $P_H(Y_{\beta})$;
		\end{itemize}
	\end{claim}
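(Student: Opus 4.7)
The plan is to reduce the claim to Lemma \ref{p123} by establishing, via transfinite induction on $\beta$, the inclusion chain
\[
\si(\pp)\subseteq Y_{\beta}\subseteq\Irr(\pp).
\]
Once this is in hand, Lemma \ref{p123} immediately yields (P1)$\sim$(P3) for the pair $\langle P_H(Y_{\beta}),\eta_{\pp}\rangle$, since $Y_\beta$ is then precisely an allowable intermediate family.

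For the base case $\beta=0$, I would observe that $Y_0=\eta_{\pp}(\pp)=\{\da(x,e):(x,e)\in\pp\}$ coincides with $\si(\Sigma\pp)=\si(\pp)$, and that each principal ideal $\da(x,e)$ is Scott closed and irreducible in $\Sigma\pp$, hence $Y_0\subseteq\Irr(\pp)$ (alternatively via $\si\subseteq\Irr$ from Lemma \ref{subset}). For the successor step, assume $\si(\pp)\subseteq Y_{\beta}\subseteq\Irr(\pp)$. The inclusion $Y_{\beta+1}\subseteq\Irr(\pp)$ is immediate from the definition $Y_{\beta+1}\subseteq\pp^s=\Irr(\pp)$. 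For the reverse direction I would actually prove the stronger $Y_{\beta}\subseteq Y_{\beta+1}$: given $B\in Y_{\beta}$, take the singleton $\B=\{B\}$ as witness. Because $\si(\pp)\subseteq Y_{\beta}$ by induction hypothesis, Lemma \ref{embedding}(2) applies to $P_H(Y_{\beta})$, so its specialization order is set inclusion and $\cl_{P_H(Y_{\beta})}(\B)=\da_{Y_{\beta}}B\in\si(Y_{\beta})\subseteq\KF(Y_{\beta})$ by Lemma \ref{subset}; hence $\B\in\KF(Y_{\beta})$. The same specialization-order description in $\pp^s=P_H(\Irr(\pp))$ gives $\cl_{\pp^s}(\B)=\da_{\pp^s}B$, placing $B$ in $Y_{\beta+1}$. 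For a limit ordinal $\beta$, both inclusions pass to the union $Y_{\beta}=\bigcup_{\gamma<\beta}Y_{\gamma}$ without any further work.

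The main delicate point I foresee is verifying that the singleton witness $\{B\}$ is genuinely a closed KF-set of $Y_{\beta}$; this rests entirely on the inductive hypothesis $\si(\pp)\subseteq Y_{\beta}$, which is exactly what is required to invoke Lemmas \ref{embedding}(2) and \ref{subset} inside $Y_{\beta}$. Once this is secured, the rest of the successor step is the routine identification $\cl_{\pp^s}(\{B\})=\da_{\pp^s}B$ coming from the fact that the specialization order on $\pp^s$ is set inclusion. No further ingredients beyond Lemmas \ref{embedding}, \ref{subset}, and \ref{p123} should be needed.
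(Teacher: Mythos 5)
Your proposal is correct and takes essentially the same route as the paper: the paper likewise reduces the claim to Lemma \ref{p123} via the chain $\eta_{\pp}(\pp)=Y_0\subseteq Y_{\beta}\subseteq\Irr(\pp)$, except that it obtains the monotonicity of the sequence $(Y_{\beta})$ directly by citing Lemma \ref{shen} instead of re-proving it by transfinite induction as you do. The only cosmetic point is that the witness in your successor step should be the point closure $\da_{Y_{\beta}}B\in\si(Y_{\beta})\subseteq\KF(Y_{\beta})$ rather than the (possibly non-closed) singleton $\{B\}$ itself; since its closure in $\pp^s$ is still $\da_{\pp^s}B$, the argument goes through unchanged.
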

	Obviously, $\eta_{\pp}(\pp)=Y_0\subseteq Y_{\beta}\subseteq \Irr(\pp)$ by Lemma \ref{shen}.
	Then (P1)$\sim$(P3) hold by Lemma \ref{p123}.
	\begin{claim}\label{embed2}
		For each ordinal $\beta$, $j(X_{\beta})=\ua_{Y_{\beta}}\eta_{\pp}(\Max (\pp))$, where	$\forall A\in \Irr(\Max(\pp)), j(A)=\cl_{\pp}(A).$
	\end{claim}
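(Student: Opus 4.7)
The plan is to proceed by transfinite induction on $\beta$, with the successor step driven by Corollary \ref{equation2} applied to the pair $\langle P_H(Y_\beta), \eta_{\pp}\rangle$, which satisfies (P1)$\sim$(P3) by Claim \ref{p1234}.

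For the base case $\beta = 0$, direct computation gives $j(X_0) = \{\cl_{\pp}(\{(e,e)\}) : e \in \Max(P)\} = \eta_{\pp}(\Max(\pp))$. Since the specialization order on $P_H(Y_0)$ is set inclusion (Lemma \ref{embedding}), an element $\da(x,e) \in Y_0$ can contain $\da(e',e')$ for a maximal $(e',e')$ only when $(x,e) = (e',e')$, by the definition of the order on $\pp$; hence $\ua_{Y_0}\eta_{\pp}(\Max(\pp)) = \eta_{\pp}(\Max(\pp))$ as well. For limit $\beta$, the claim reduces to the observation that $j$ commutes with unions (Lemma \ref{inject}) and that $\ua_{\bigcup_\gamma Y_\gamma} S = \bigcup_\gamma \ua_{Y_\gamma} S$ for any fixed $S$.

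Assume inductively that $j(X_\beta) = \ua_{Y_\beta}\eta_{\pp}(\Max(\pp))$. Since $j : \Max(\pp)^s \to \pp^s$ is a topological embedding (Proposition \ref{prop:embed}) that restricts to a homeomorphism $X_\beta \to j(X_\beta)$, Lemma \ref{map} yields $\KF(j(X_\beta)) = j(\KF(X_\beta))$. For the forward inclusion, take $A \in X_{\beta+1}$ witnessed by $\A \in \KF(X_\beta)$; unpacking $\cl_{\Max(\pp)^s}(\A) = \da_{\Max(\pp)^s}A$ via the description of closed sets in $\Max(\pp)^s$ yields $A = \cl_{\Max(\pp)}(\bigcup\A)$. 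Setting $\B := \cl_{P_H(Y_\beta)}(j(\A))$, Corollary \ref{equation2} gives $\B \in \KF(Y_\beta)$; the sandwich $j(\A) \subseteq \B \subseteq \cl_{\pp^s}(j(\A))$ together with idempotence of closure shows $\cl_{\pp^s}(\B) = \cl_{\pp^s}(j(\A)) = \da_{\pp^s}\cl_{\pp}(\bigcup\A) = \da_{\pp^s}j(A)$, so $j(A) \in Y_{\beta+1}$. Since any $(e,e) \in A$ yields $j(A) \supseteq \da(e,e)$, we conclude $j(A) \in \ua_{Y_{\beta+1}}\eta_{\pp}(\Max(\pp))$.

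For the reverse inclusion, take $B \in Y_{\beta+1}$ with $B \supseteq \da(e,e)$ witnessed by $\B \in \KF(Y_\beta)$; by Lemma \ref{irr}, $A' := B \cap \Max(\pp) \in \Irr(\Max(\pp))$ and $B = \cl_{\pp}(A') = j(A')$. Corollary \ref{equation2} presents $\B$ in one of two forms: the form $\B = \eta_{\pp}(\da(x,e))$ with $(x,e) \in \pp\backslash\Max(\pp)$ would force $\cl_{\pp^s}(\B) = \da_{\pp^s}\da(x,e)$, hence $B = \da(x,e)$, and then $\da(e',e') \subseteq B$ would require a maximal $(e',e') \leqslant (x,e)$, impossible. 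Hence $\B = \cl_{P_H(Y_\beta)}(j(\A))$ for some $\A \in \KF(X_\beta)$ via the inductive hypothesis, and matching $\cl_{\pp^s}(\B) = \da_{\pp^s}\cl_{\pp}(\bigcup\A)$ against $\cl_{\pp^s}(\B) = \da_{\pp^s}B = \da_{\pp^s}\cl_{\pp}(A')$ yields $\cl_{\pp}(\bigcup\A) = \cl_{\pp}(A')$; intersecting with $\Max(\pp)$ and applying Corollary \ref{equation1} gives $\cl_{\Max(\pp)}(\bigcup\A) = A'$, so $A' \in X_{\beta+1}$ and $j(A') = B$. The main obstacle is the bookkeeping across three closure operators ($\cl_{\Max(\pp)}$, $\cl_{\pp}$, $\cl_{\pp^s}$) together with the subspace closure $\cl_{P_H(Y_\beta)}$, in particular the two identifications $\cl_{\pp^s}(\B) = \cl_{\pp^s}(j(\A))$ and $\cl_{\pp}(\bigcup\A) \cap \Max(\pp) = \cl_{\Max(\pp)}(\bigcup\A)$.
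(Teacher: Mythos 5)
Your proof is correct and follows essentially the same route as the paper's: transfinite induction whose successor step transfers closed KF-sets between $X_\beta$ and $\ua_{Y_\beta}\eta_{\pp}(\Max(\pp))$ via the embedding $j$, using the fact that $\langle P_H(Y_\beta),\eta_{\pp}\rangle$ satisfies (P1)$\sim$(P3) so that Proposition \ref{kfset} (packaged as Corollary \ref{equation2}) applies. The only differences are presentational: you compute closures in the Hoare powerspaces through the explicit $\Box$-description (e.g.\ $\cl_{\pp^s}(j(\A))=\da_{\pp^s}\cl_{\pp}(\bigcup\A)$) where the paper argues by two inclusions with the subbasic opens $\Diamond_{\pp^s}U$, and you rule out the case $\B=\eta_{\pp}(\da(x,e))$ via the dichotomy of Corollary \ref{equation2} where the paper directly verifies that $\B\cap\ua_{Y_\beta}\eta_{\pp}(\Max(\pp))$ is a nonempty closed KF-set.
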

	
	{\bf Step 1.} We claim that $j(X_{0})=\ua_{Y_{0}}\eta_{\pp}(\Max (\pp))$.
	
	Note that $\eta_{\pp}(\Max (\pp))=\{\da (e,e):(e,e)\in \Max(\pp)\}$ and $Y_0 =\{\da (x,e): (x,e)\in \pp\}.$
	Then $\ua_{Y_{0}}\eta_{\pp}(\Max (\pp))=\eta_{\pp}(\Max (\pp))$.
	So
	\begin{align*}
		j(X_{0})& =\{\da (e,e):e\in \Max(P)\} \\
		& =\ua_{Y_{0}}\eta_{\pp}(\Max (\pp)).
	\end{align*}
	
	{\bf Step 2.} Let $\beta<\alpha$. Assume $j(X_{\beta})=\ua_{Y_{\beta}}\eta_{\pp}(\Max (\pp))\subseteq Y_{\beta}$ (see Figure \ref{fig2}).
	We claim that $j(X_{\beta+1})=\ua_{Y_{\beta+1}}\eta_{\pp}(\Max (\pp))$.
	\begin{figure}[htbp]
		\centering
		\tikzstyle{format}=[rectangle,draw=white,thin,fill=white]
		\tikzstyle{test}=[diamond,aspect=2,draw,thin]
		\tikzstyle{point}=[coordinate,on grid,]
		\begin{tikzpicture}
			\node[format] (X0){$X_0$};
			\node[format,right of=X0,node distance=30mm] (Xb){$X_{\beta}$};
			\node[format,right of=Xb,node distance=30mm] (Xb+1){$X_{\beta+1}$};
			\node[format,below of=X0,node distance=20mm] (Y0){$Y_{0}$};
			\node[format,right of=Y0,node distance=30mm] (Yb){$Y_{\beta}$};
			\node[format,right of=Yb,node distance=30mm] (Yb+1){$Y_{\beta+1}$};
			\node[format,right of=Xb+1,node distance=30mm] (maxps){$\Irr(\Max(\pp))$};
			\node[format,right of=Yb+1,node distance=30mm] (Ps){$\Irr(\pp)$};
			
			\draw[->] (maxps)--node[right]{$j$}(Ps);
			\draw[->](X0.east)--node[above]{$\subseteq$}(Xb.west);
			\draw[->](Y0.east)--node[above]{$\subseteq$}(Yb.west);
			\draw[->] (X0)--node[right]{$j$}(Y0);
			\draw[->](Xb.east)--node[above]{$\subseteq$}(Xb+1.west);
			\draw[->](Yb.east)--node[above]{$\subseteq$}(Yb+1.west);
			\draw[->] (Xb)--node[right]{$j$}(Yb);
			\draw[->](Xb+1.east)--node[above]{$\subseteq$}(maxps.west);
			\draw[->](Yb+1.east)--node[above]{$\subseteq$}(Ps.west);
		\end{tikzpicture}
		\caption{The  ranges of  the mapping $j$ with different domains.}
		\label{fig2}
	\end{figure}
	
	(1)  $j(X_{\beta+1})\subseteq\ua_{Y_{\beta+1}}\eta_{\pp}(\Max (\pp))$.
	
	Recall from Proposition \ref{prop:embed} that  $j(\Irr(\Max(\pp))=\ua_{\Irr(\pp)}\eta_{\pp}(\Max(\pp))$.
	Then $$j(X_{\beta+1})\subseteq j(\Irr(\Max(\pp)) \subseteq \ua_{\Irr(\pp)}\eta_{\pp}(\Max(\pp)).$$
	It suffices to show that  $j(X_{\beta+1})\subseteq Y_{\beta+1}$.
	
	Assume $A\in X_{\beta+1}\subseteq\Irr(\Max(\pp))$.
	Then there exists $\A\in \KF(X_{\beta})$ such that $\cl_{\Max(\pp)^s}(\A)=\da_{\Max(\pp)^s} A$.
	By Lemma \ref{closure}, $\A=\cl_{\Max(\pp)^s}(\A)\cap X_{\beta}$.
	That is, $\A=\da_{\Max(\pp)^s} A\cap X_{\beta}$.
	Indeed, $j: X_{\beta}\longrightarrow Y_{\beta}$ is continuous by hypothesis.
	Thus $\cl_{Y_{\beta}}(j(\A))$ is a closed KF-set of $Y_{\beta}$ by Lemma \ref{map}.
	Hence,
	\begin{align*}
		\cl_{\pp^s}\cl_{Y_{\beta}}(j(\A))
		&=\cl_{\pp^s}(j(\A)) \\
		&=\cl_{\pp^s} (j(\da_{\Max(\pp)^s} A\cap X_{\beta}))\\
		& \subseteq \cl_{\pp^s} (j(\da_{\Max(\pp)^s} A)))\\
		&=\cl_{\pp^s}(j(A))\\
		&=\da_{\pp^s}\cl_{\pp}(A).
	\end{align*}
	
	Conversely, for any $U\in \sigma(\pp)$ with $\cl_{\pp}(A)\in \Diamond_{\pp^s}U$, we have $A\cap U\neq\emptyset$, i.e., $A\cap U\cap\Max(\pp)\neq\emptyset$.
	Pick $(e,e)\in A\cap U\cap\Max(\pp)$.
	Then
	$$\{(e,e)\}\subseteq \da_{\Max(\pp)^s} A\cap X_{0}\subseteq \da_{\Max(\pp)^s} A\cap X_{\beta}.$$
	It follows that
	$$\da (e,e)\in j(\da_{\Max(\pp)^s} A\cap X_{\beta})\cap\Diamond_{\pp^s}U\neq\emptyset.$$
	That is to say, $$\cl_{\pp}(A)\in\cl_{\pp^s} (j(\da_{\Max(\pp)^s} A\cap X_{\beta})).$$
	Hence, $$\da_{\pp^s}\cl_{\pp}(A)=\cl_{\pp^s}\cl_{Y_{\beta}}(j(\A)).$$
	So $j(A)=\cl_{\pp}(A)\in Y_{\beta+1}$.
	Moreover, the mapping $j: X_{\beta+1}\longrightarrow Y_{\beta+1}$ is a topological embedding.
	
	Since  $j(X_{\beta+1})\subseteq Y_{\beta+1}$, we have $j(X_{\beta+1})\subseteq Y_{\beta+1}\cap \ua_{\Irr(\pp)}\eta_{\pp}(\Max(\pp))=\ua_{Y_{\beta+1}}\eta_{\pp}(\Max (\pp))$.

	(2) $j(X_{\beta+1})\supseteq\ua_{Y_{\beta+1}}\eta_{\pp}(\Max (\pp))$.
	
	Let $B\in \ua_{Y_{\beta+1}}\eta_{\pp}(\Max (\pp))\subseteq\ua_{\Irr(\pp)}\eta_{\pp}(\Max (\pp))$.
	Then $B^*=B\cap\Max(\pp)\neq\emptyset$, $B=\cl_{\pp}(B^*)$ and $B^*\in\Irr(\Max (\pp))$ by Lemma \ref{irr}.
	We need to prove that $B^*\in X_{\beta+1}$.
	
	Since $B\in Y_{\beta+1}$, there exists $\B\in\KF(Y_{\beta})$ such that $\cl_{\pp^s}(\B)=\da_{\pp^s}B$.
	By Lemma \ref{closure}, $\B=\cl_{\pp^s}(\B)\cap Y_{\beta}$.
	Thus \begin{align*}
		\B\cap \ua_{Y_{\beta}}\eta_{\pp}(\Max (\pp)) & =\cl_{\pp^s}(\B)\cap Y_{\beta}\cap \ua_{Y_{\beta}}\eta_{\pp}(\Max (\pp)) \\
		& =\da_{\pp^s}B\cap \ua_{Y_{\beta}}\eta_{\pp}(\Max (\pp)).
	\end{align*}
	Pick an element $(e,e)$ from $B^*$.
	In fact, $\da(e,e)\in \da_{\pp^s}B$ and  $\da(e,e)\in\eta_{\pp}(\Max (\pp))\subseteq\ua_{Y_{\beta}}\eta_{\pp}(\Max (\pp))$.
	That is,  $\B\cap \ua_{Y_{\beta}}\eta_{\pp}(\Max (\pp))$ is nonempty.
	By Proposition \ref{kfset}, the intersection $\B\cap \ua_{Y_{\beta}}\eta_{\pp}(\Max (\pp))$ is a closed KF-subset of  $\ua_{Y_{\beta}}\eta_{\pp}(\Max (\pp))$ because $\B$ is a closed KF-subset of $Y_{\beta}$.

	By assumption, the mapping $j: X_{\beta}\longrightarrow \ua_{Y_{\beta}}\eta_{\pp}(\Max (\pp))$ is a homeomorphism.
	It suggests that
	$$j^{-1}(\B\cap \ua_{Y_{\beta}}\eta_{\pp}(\Max (\pp)))=j^{-1}(\da_{\pp^s}B\cap \ua_{Y_{\beta}}\eta_{\pp}(\Max (\pp)))\in \KF(X_{\beta}).$$
	So
	\begin{align*}
		& \cl_{\Max(\pp)^s}\left(j^{-1}(\da_{\pp^s}B\cap \ua_{Y_{\beta}}\eta_{\pp}(\Max (\pp)))\right) \\
		\subseteq~& \cl_{\Max(\pp)^s}\left(j^{-1}(\da_{\pp^s}B)\right)\\
		=~&\cl_{\Max(\pp)^s}(j^{-1}(B))\\
		=~&\da_{\Max(\pp)^s}B^*.
	\end{align*}
	
	Conversely, for any $U\in \sigma(\pp)$ with $B^*\in \Diamond_{\Max(\pp)^s}(U\cap\Max(\pp)) $, we have $B^*\cap U\cap\Max(\pp)\neq\emptyset$.
	Pick an element $(e,e)\in B^*\cap U\cap\Max(\pp)=B\cap U\cap\Max(\pp)$.
	Then the set $\da (e,e)\in \da_{\pp^s}B\cap \ua_{Y_{\beta}}\eta_{\pp}(\Max (\pp)))$, which implies that
	$$\{(e,e)\}=j^{-1}(\da (e,e))\in j^{-1}(\da_{\pp^s}B\cap \ua_{Y_{\beta}}\eta_{\pp}(\Max (\pp))).$$
	Thus,
	$$j^{-1}(\da_{\pp^s}B\cap \ua_{Y_{\beta}}\eta_{\pp}(\Max (\pp)))\cap\Diamond_{\Max(\pp)^s}(U\cap\Max(\pp))\neq\emptyset. $$
	That is to say,
	$$B^*\in \cl_{\Max(\pp)^s}\left(j^{-1}(\da_{\pp^s}B\cap \ua_{Y_{\beta}}\eta_{\pp}(\Max (\pp)))\right).$$
	Hence,
	$$\da_{\Max(\pp)^s}B^*= \cl_{\Max(\pp)^s}\left(j^{-1}(\da_{\pp^s}B\cap \ua_{Y_{\beta}}\eta_{\pp}(\Max (\pp)))\right).$$
	Therefore $B^*\in X_{\beta+1}.$
	
	
	We conclude that $j(X_{\beta+1})=\ua_{Y_{\beta+1}}\eta_{\pp}(\Max (\pp))$.
	
	{\bf Step 3.} Let $\beta\leqslant\alpha$ be a limit ordinal. Assume for any $\gamma<\beta$, $j(X_{\gamma})=\ua_{Y_{\gamma}}\eta_{\pp}(\Max (\pp))$.
	We claim that $j(X_{\beta})=\ua_{Y_{\beta}}\eta_{\pp}(\Max (\pp))$.
	
	We have
	\begin{align*}
		j(X_{\beta}) & = j(\bigcup_{\gamma <\beta}X_{\gamma})\\
		& = \bigcup_{\gamma <\beta}j(X_{\gamma})\\
		&= \bigcup_{\gamma <\beta}\ua_{Y_{\gamma}}\eta_{\pp}(\Max (\pp))\\
		&=\ua_{Y_{\beta}}\eta_{\pp}(\Max (\pp)).
	\end{align*}
	
	\begin{claim}\label{equal}
		There exists an ordinal $\alpha$ such that $X_{\alpha}=X_{\alpha+1}$ and $Y_{\alpha}=Y_{\alpha+1}$.
		In this case, $X_{\alpha}=\WD(\Max(\pp))$ and $Y_{\alpha}=\WD(\pp)$.
	\end{claim}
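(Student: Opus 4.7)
The plan splits into two stages: first establish the existence of a common stabilizing ordinal $\alpha$, and then identify the stabilized sets with the closed WD subsets.

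For the existence part, apply Lemma~\ref{shen} separately to $Z_0 = \Max(\pp)$ viewed as a subspace of the sober space $\Max(\pp)^s$, and to $Z_0 = \Sigma\pp$ viewed as a subspace of $\pp^s$. This yields ordinals $\alpha_1$ and $\alpha_2$ at which the sequences $\{X_\beta\}$ and $\{Y_\beta\}$ respectively become constant. Both sequences are monotone increasing in set inclusion and bounded above (by $\Irr(\Max(\pp))$ and $\Irr(\pp)$ respectively), so setting $\alpha = \max\{\alpha_1,\alpha_2\}$ produces a single stopping ordinal with $X_\alpha = X_{\alpha+1}$ and $Y_\alpha = Y_{\alpha+1}$.

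For the identification, I would combine Lemma~\ref{shen} with Lemma~\ref{completion2}. The former says that $X_\alpha$ (with the subspace topology from $\Max(\pp)^s$) is a well-filtered reflection of $\Max(\pp)$, and $Y_\alpha$ is one of $\Sigma\pp$. The latter realizes the well-filtered reflection of $\Max(\pp)$ as $P_H(\WD(\Max(\pp)))$, and of $\Sigma\pp$ as $P_H(\WD(\pp))$. Both $X_\alpha$ and $\WD(\Max(\pp))$ sit canonically inside $\Irr(\Max(\pp)) = \Max(\pp)^s$, both contain the canonical image $\eta_{\Max(\pp)}(\Max(\pp))$, and in both the specialization order is set inclusion (Lemma~\ref{embedding}). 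Applying the universal property of the well-filtered reflection in each direction produces mutually inverse continuous maps between them that fix the canonical image; since these maps agree on a $T_0$-separating subset of the common sobrification, they must coincide with the inclusions, forcing $X_\alpha = \WD(\Max(\pp))$ as sets. The identical argument yields $Y_\alpha = \WD(\pp)$.

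The main obstacle is upgrading the abstract uniqueness (only up to isomorphism) of a well-filtered reflection to a literal equality of subsets of the sobrification. A more concrete route is a two-sided inclusion argument. For $X_\beta \subseteq \WD(\Max(\pp))$ I would proceed by transfinite induction: the base case uses $X_0 = \eta_{\Max(\pp)}(\Max(\pp)) \subseteq \si(\Max(\pp)) \subseteq \WD(\Max(\pp))$ via Lemma~\ref{subset}; the successor step uses that each $A \in X_{\beta+1}$ is obtained, via Shen's closure construction inside the sobrification, from a closed KF-set $\A$ of $X_\beta$, and Lemmas~\ref{map2} and~\ref{subset} transport the KF/WD property along the inclusion $X_\beta \hookrightarrow \WD(\Max(\pp))$; the limit step is trivial. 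For the reverse inclusion, since $X_\alpha$ is itself a well-filtered subspace of $\Max(\pp)^s$ containing $\eta_{\Max(\pp)}(\Max(\pp))$, the universal property applied to $\WD(\Max(\pp))$ supplies a continuous map $\WD(\Max(\pp)) \to X_\alpha$ that, by $T_0$-separation inside $\Max(\pp)^s$, must be the inclusion. The parallel argument handles $Y_\alpha = \WD(\pp)$ using $Y_0 = \eta_{\pp}(\pp)$ in place of $X_0$, and Proposition~\ref{kfset} to justify that the inductive KF-step stays inside $\WD(\pp)$.
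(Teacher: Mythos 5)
Your proposal is correct, and for the identification step ($X_\alpha=\WD(\Max(\pp))$ and $Y_\alpha=\WD(\pp)$) it coincides with the paper's argument: both invoke Lemmas~\ref{shen} and~\ref{completion2} to exhibit $P_H(X_\alpha)$ and $P_H(\WD(\Max(\pp)))$ as two well-filtered reflections of $\Max(\pp)$ and then appeal to the universal property; your added remark that one must upgrade ``unique up to homeomorphism over $\eta$'' to a literal equality of subsets of $\Max(\pp)^s$ is a fair point that the paper's proof glosses over, and your two-sided inclusion backup (transfinite induction for $X_\beta\subseteq\WD(\Max(\pp))$, universal property plus injectivity of $\eta$ for the reverse) is a reasonable way to close that gap. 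Where you genuinely diverge is in producing a common stopping ordinal. You apply Lemma~\ref{shen} twice, obtain $\alpha_1$ and $\alpha_2$ independently, and set $\alpha=\max\{\alpha_1,\alpha_2\}$, using that each tower, once stationary, stays stationary because every stage is determined by its predecessor. The paper instead applies Lemma~\ref{shen} only to $\{X_\beta\}$ and transfers $X_\alpha=X_{\alpha+1}$ to $Y_\alpha=Y_{\alpha+1}$ through the decomposition $Y_\beta=\ua_{Y_\beta}\eta_{\pp}(\Max(\pp))\cup\eta_{\pp}(\pp\backslash\Max(\pp))$ from (P2) combined with Claim~\ref{embed2}, i.e. $j(X_\beta)=\ua_{Y_\beta}\eta_{\pp}(\Max(\pp))$. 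Your route is more elementary and self-contained; the paper's route records the slightly stronger and thematically central fact that the two towers stabilize in lock-step, which is precisely the payoff of Claim~\ref{embed2} and is what later feeds into Corollary~\ref{equation3}. Either way the claim as stated follows.
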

	
	By Lemma \ref{shen}, there exists $\alpha$ such that $X_{\alpha}=X_{\alpha+1}$.
	Thus
	\begin{align*}
		Y_{\alpha} & = \ua_{Y_{\alpha}}\eta_{\pp}(\Max(\pp))\cup \eta_{\pp}(\pp\backslash \Max (\pp))&({\rm~by ~(P2)})\\
		& =j(X_{\alpha})\cup \eta_{\pp}(\pp\backslash \Max (\pp))&({\rm~by ~Claim ~\ref{embed2}})\\
		&=j(X_{\alpha+1})\cup \eta_{\pp}(\pp\backslash \Max (\pp)) &({\rm~since ~}X_{\alpha}=X_{\alpha+1})\\
		&= \ua_{Y_{\alpha+1}}\eta_{\pp}(\Max(\pp))\cup \eta_{\pp}(\pp\backslash \Max (\pp)) &({\rm~by ~Claim ~\ref{embed2}})\\
		&= Y_{\alpha+1}.&({\rm~by ~(P2)})
	\end{align*}
	According to Lemmas \ref{shen} and \ref{completion2},  $\langle  P_H(X_{\alpha}),\eta_{\Max(\pp)}\rangle$ and $\langle  P_H(\WD(\Max(\pp))),\eta_{\Max(\pp)}\rangle$ are both well-filterifications of the maximal point space $\Max(\pp)$.
	From the definition of well-filterification, there exists a unique mapping $h:X_{\alpha}\longrightarrow \WD(\Max(\pp))$ such that $\eta_{\Max(\pp)}=h\circ\eta_{\Max(\pp)}$ and a unique mapping $g:\WD(\Max(\pp))\longrightarrow X_{\alpha}$ such that $\eta_{\Max(\pp)}=g\circ\eta_{\Max(\pp)}$,
	that is, the following diagram commutes.
	\begin{figure}[H]
		\centering
		\tikzstyle{format}=[rectangle,draw=white,thin,fill=white]
		\tikzstyle{test}=[diamond,aspect=2,draw,thin]
		\tikzstyle{point}=[coordinate,on grid,]
		\begin{tikzpicture}
			\node[format] (maxp){$\Max (\pp)$};
			\node[format,right of=maxp,node distance=30mm] (Xs){$X_{\alpha}$};
			\node[format,below of=Xs,node distance=13mm] (Y){$\WD(\Max(\pp))$};
			\draw[->] (maxp.east)--node[yshift=2mm]{\footnotesize$\eta_{\Max(\pp)}$}(Xs.west);
			\draw[->] (maxp)--node[yshift=-3mm]{\footnotesize$\eta_{\Max(\pp)}$}(Y);
			\draw[dotted,->] ( [xshift=1mm]Xs.south)--node[right]{$h$}( [xshift=1mm]Y.north);
			\draw[dotted,->]  ( [xshift=-1mm]Y.north)--node[left]{$g$}([xshift=-1mm]Xs.south);
		\end{tikzpicture}
		\caption{A commutative diagram of Claim \ref{equal}.}
	\end{figure}

	Hence, $X_{\alpha}=\WD(\Max(\pp))$.
	Similarly, $Y_{\alpha}=\WD(\pp)$.
	
	Immediately,  we can obtain the following results.
	\begin{theorem}\label{embedwf}
		For a $T_1$ space $X$ and its Xi-Zhao model $\pp$, $X^w$ can be topologically embedded into $\pp^w$ as a saturated subspace.
		In addition, the following diagram commutes.	
		\begin{figure}[htbp]
			\centering
			\tikzstyle{format}=[rectangle,draw=white,thin,fill=white]
			\tikzstyle{test}=[diamond,aspect=2,draw,thin]
			\tikzstyle{point}=[coordinate,on grid,]
			\begin{tikzpicture}
				\node[format] (maxp){$\Max(\pp)$};
				\node[format,right of=maxp,node distance=40mm] (maxps){$P_H(\WD(\Max(\pp)))$};
				\node[format,below of=maxp,node distance=20mm] (P){$\Sigma\pp$};
				\node[format,right of=P,node distance=40mm] (Ps){$P_H(\WD(\pp))$};
				
				\draw[->] (maxp.east)--node[above]{$\eta_{\Max(\pp)}$}(maxps.west);
				\draw[->] (maxp)--node[left]{$i$}(P);
				\draw[->] (P.east)--node[below]{$\eta_{\pp}$}(Ps.west);
				\draw[->] (maxps)--node[right]{$j$}(Ps);
			\end{tikzpicture}
			 \caption{A commutative diagram of Theorem \ref{embedwf}.}
			\label{fig3}
		\end{figure}
	\end{theorem}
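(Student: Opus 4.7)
The plan is to combine Claims \ref{embed2} and \ref{equal} with Proposition \ref{prop:embed}. Let $\alpha$ be the ordinal furnished by Claim \ref{equal}, so that $X_{\alpha}=\WD(\Max(\pp))$ and $Y_{\alpha}=\WD(\pp)$. Instantiating Claim \ref{embed2} at $\beta=\alpha$ yields the single identity
$$j(\WD(\Max(\pp)))=\ua_{\WD(\pp)}\eta_{\pp}(\Max(\pp)),$$
and I intend to use this one identity to deliver both the corestriction of $j$ and the saturation of its image.

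First I would note that, by Lemma \ref{embedding}, the spaces $X^w=P_H(\WD(\Max(\pp)))$ and $\pp^w=P_H(\WD(\pp))$ carry the lower Vietoris topologies inherited as subspaces of $\Max(\pp)^s$ and $\pp^s$, respectively. Since $j:\Max(\pp)^s\longrightarrow \pp^s$ is already a topological embedding by Proposition \ref{prop:embed}(1), and since the displayed identity shows that $j$ maps $\WD(\Max(\pp))$ into $\WD(\pp)$, the corestricted map $j:X^w\longrightarrow \pp^w$ inherits the embedding property from the definition of the subspace topology. For saturation, I would invoke Lemma \ref{embedding}(2) to identify the specialization order on $\pp^w$ with set inclusion; then $\ua_{\WD(\pp)}\eta_{\pp}(\Max(\pp))$ is literally an upper set, hence a saturated subspace of $\pp^w$.

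Finally, the commutativity of the outer square is inherited from the inner commutativity $j\circ \eta_{\Max(\pp)}=\eta_{\pp}\circ i$ of Proposition \ref{prop:embed}(2), because $\eta_{\Max(\pp)}(\Max(\pp))=X_0\subseteq X^w$ and $\eta_{\pp}(\pp)=Y_0\subseteq \pp^w$, so the same equation of set-valued maps holds on the nose when the codomains are restricted. I anticipate no real obstacle here: the heavy technical work has already been absorbed into the transfinite induction of Claim \ref{embed2} and the stabilization argument of Claim \ref{equal}. The only point needing care is confirming that codomain restriction preserves the topological embedding property, which reduces to the elementary subspace-topology fact cited above.
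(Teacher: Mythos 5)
Your proposal is correct and follows essentially the same route as the paper: the theorem is stated there as an immediate consequence of Claims \ref{embed2} and \ref{equal} (giving $j(\WD(\Max(\pp)))=\ua_{\WD(\pp)}\eta_{\pp}(\Max(\pp))$ at the stabilizing ordinal $\alpha$) together with Proposition \ref{prop:embed}, exactly as you argue. Your explicit remarks that the lower Vietoris topology on $P_H(\G)$ is the subspace topology from $P_H(\Irr(\cdot))$, that an upper set for the inclusion (specialization) order is saturated, and that commutativity survives codomain restriction, merely spell out details the paper leaves implicit.
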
	
	Like irreducible closed subsets and  closed KF-subsets, we can also give some characterizations for the closed WD subsets below.
	\begin{corollary}\label{equation3}
		Let $P$ be a bounded complete algebraic poset. Let $A\in\WD(\pp)$ and $A^*=A\cap \Max (\pp)$.
		Then the following properties hold:
		\begin{enumerate}[(i)]
			\item if $A^*=\emptyset$, then there exists $(x,e) \in \pp\backslash \Max (\pp)$ such that $A=\da (x,e)$;
			\item if $A^*\neq\emptyset$, then $A=\cl_{\pp}(A^*)$ and $A^*$ is  a closed WD subset of $\Max (\pp)$.
		\end{enumerate}
		Thus  we have the following equations:
		\begin{equation*}\left\{
			\begin{aligned}
				\WD (\pp) =& \{\cl_{\pp}(B):A\in \WD ( \Max (\pp))\}\cup\{(\da (x,e): (x,e)\in \pp\backslash \Max (\pp)\}, \\
				\WD ( \Max (\pp))= & \{B\cap \Max (\pp): B\cap \Max (\pp)\neq\emptyset {~\rm and~} B\in \WD (\pp) \}.
			\end{aligned}
			\right.\end{equation*}
	\end{corollary}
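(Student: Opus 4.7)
The plan is to derive this corollary as a direct consequence of the transfinite construction developed in Claims \ref{embed2} and \ref{equal}, together with the structural decomposition of irreducible closed subsets in Lemma \ref{irr}. Since every closed WD set is irreducible by Lemma \ref{subset}, the dichotomy in Lemma \ref{irr} between the cases $A^*=\emptyset$ and $A^*\neq\emptyset$ is immediately available for any $A\in\WD(\pp)$.

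First I would dispose of case (i): if $A^*=\emptyset$, then Lemma \ref{irr}(1), applied to $A\in\Irr(\pp)$, gives $A=\da(x,e)$ for some $(x,e)\in\pp\backslash\Max(\pp)$, with no further work needed.

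The real content lies in case (ii). Given $A\in\WD(\pp)$ with $A^*\neq\emptyset$, Lemma \ref{irr}(2) yields $A=\cl_{\pp}(A^*)$ and $A^*\in\Irr(\Max(\pp))$; the remaining task is to upgrade $A^*$ from irreducible closed to WD in $\Max(\pp)$. For this I would invoke Claim \ref{equal}, which identifies $Y_\alpha=\WD(\pp)$ and $X_\alpha=\WD(\Max(\pp))$, together with Claim \ref{embed2} at $\beta=\alpha$, giving $j(\WD(\Max(\pp)))=\ua_{\WD(\pp)}\eta_{\pp}(\Max(\pp))$. Since $A^*\neq\emptyset$, I pick $(e,e)\in A^*$; then $\eta_{\pp}((e,e))=\da(e,e)\subseteq A$, so $A$ lies in $\ua_{\WD(\pp)}\eta_{\pp}(\Max(\pp))=j(\WD(\Max(\pp)))$. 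Applying Proposition \ref{prop:embed}(4), $j^{-1}(A)=A\cap\Max(\pp)=A^*$ belongs to $\WD(\Max(\pp))$, completing (ii).

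The two displayed equations then follow routinely: the $\subseteq$ directions are precisely (i) and (ii). For the $\supseteq$ direction of the first equation, $\{\cl_{\pp}(B):B\in\WD(\Max(\pp))\}=j(\WD(\Max(\pp)))\subseteq Y_\alpha=\WD(\pp)$ by Claim \ref{embed2}, while $\{\da(x,e):(x,e)\in\pp\backslash\Max(\pp)\}\subseteq\eta_{\pp}(\pp)=Y_0\subseteq Y_\alpha=\WD(\pp)$. The second equation follows similarly, using that $j$ restricts to a bijection from $\WD(\Max(\pp))$ onto $\ua_{\WD(\pp)}\eta_{\pp}(\Max(\pp))$ with inverse $B\mapsto B\cap\Max(\pp)$ by Proposition \ref{prop:embed}(4). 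I expect no significant obstacle here: the heavy lifting has already been done in the transfinite identification of $j(X_\beta)$ with $\ua_{Y_\beta}\eta_{\pp}(\Max(\pp))$, and the corollary merely extracts the pointwise consequence at the closure stage $\alpha$.
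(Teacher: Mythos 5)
Your proposal is correct and follows essentially the same route as the paper: both derive the result from Claims \ref{embed2} and \ref{equal} (identifying $X_\alpha=\WD(\Max(\pp))$, $Y_\alpha=\WD(\pp)$ and $j(X_\alpha)=\ua_{Y_\alpha}\eta_{\pp}(\Max(\pp))$), combined with Lemma \ref{irr} for the dichotomy and Proposition \ref{prop:embed}(4) for the inverse $B\mapsto B\cap\Max(\pp)$. The only cosmetic difference is that you first apply Lemma \ref{irr}(2) to get $A=\cl_{\pp}(A^*)$ and then show $A\in j(\WD(\Max(\pp)))$, whereas the paper reads off $A=\cl_{\pp}(B)$ from the set equality and then identifies $B=A^*$ via Lemma \ref{closure}.
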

	\begin{proof}
		Since every WD set is irreducible, we only consider the case that $A^*\neq\emptyset$ by Lemma \ref{irr}.
		According to Claims \ref{embed2} and \ref{equal}, we have
		\begin{align*}
			\WD(\pp) & =j(\WD(\Max(\pp)))\cup \eta_{\pp}(\pp\backslash \Max (\pp)) \\
			&=\{\cl_{\pp}(B):B\in \WD ( \Max (\pp))\}\cup\{\da (x,e): (x,e)\in \pp\backslash \Max (\pp)\}.
		\end{align*}
		From the above equation, there exists $B\in \WD ( \Max (\pp))$ such that $A=\cl_{\pp}(B)$.
		Indeed, by Lemma \ref{closure},
		\begin{align*}
			A^*&=\cl_{\pp}(A^*)\cap \Max (\pp)\\
			&=A\cap \Max (\pp)\\
			&=\cl_{\pp}(B)\cap \Max (\pp)\\
			&=B.
		\end{align*}
		So $A^*\in \WD ( \Max (\pp))$. Note that
		$$\ua_{\WD(\pp)}\eta_{\pp}(\Max(\pp))=\{B\in \WD(\pp):B\cap\Max(\pp)\neq\emptyset\}.$$
		By (P4), we have
		\begin{align*}
			\WD(\Max(\pp)) &  =j^{-1}(\ua_{\WD(\pp)}\eta_{\pp}(\Max(\pp)))\\
			&=\{B\cap\Max(\pp):~B\cap\Max(\pp)\neq\emptyset {\rm ~and}~ B\in \WD(\pp)\}.
		\end{align*}
	\end{proof}
	
	\begin{proposition}\label{wd}
		Let $P$ be a bounded complete algebraic poset. Then $\Max(\pp)$ is a WD space if and only if $\Sigma\pp$ is a WD space.
	\end{proposition}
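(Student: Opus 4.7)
The plan is to apply the characterization equations directly. Recall that a $T_0$ space $Z$ is a WD space iff $\WD(Z) = \Irr(Z)$. Combining Corollary \ref{equation1} and Corollary \ref{equation3}, both $\Irr(\pp)$ and $\WD(\pp)$ decompose into two pieces: a common ``tail'' $\{\da(x,e) : (x,e) \in \pp \setminus \Max(\pp)\}$ (which already lies in $\WD(\pp) \subseteq \Irr(\pp)$), and a ``head'' obtained by closing the corresponding family on $\Max(\pp)$. Thus $\WD(\pp) = \Irr(\pp)$ will be equivalent to
\[
\{\cl_{\pp}(B) : B \in \WD(\Max(\pp))\} = \{\cl_{\pp}(A) : A \in \Irr(\Max(\pp))\}.
\]

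The forward direction is immediate: if $\Max(\pp)$ is a WD space, then $\WD(\Max(\pp)) = \Irr(\Max(\pp))$, so the two families above are literally the same, and hence $\WD(\pp) = \Irr(\pp)$.

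For the converse, I would exploit the fact that the correspondence $A \mapsto \cl_{\pp}(A)$ is injective on closed subsets of $\Max(\pp)$: by Lemma \ref{closure}, any set $A$ closed in the subspace $\Max(\pp)$ satisfies $A = \cl_{\pp}(A) \cap \Max(\pp)$, so $A$ is recovered from $\cl_{\pp}(A)$. Now suppose $\Sigma\pp$ is a WD space and take $A \in \Irr(\Max(\pp))$. Then $\cl_{\pp}(A) \in \Irr(\pp) = \WD(\pp)$, so by Corollary \ref{equation3} there exists $B \in \WD(\Max(\pp))$ with $\cl_{\pp}(A) = \cl_{\pp}(B)$ (the alternative form $\da(x,e)$ is ruled out because $\cl_{\pp}(A) \cap \Max(\pp) = A \neq \emptyset$). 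Intersecting with $\Max(\pp)$ gives $A = B \in \WD(\Max(\pp))$, hence $\Irr(\Max(\pp)) \subseteq \WD(\Max(\pp))$; the reverse inclusion is Lemma \ref{subset}, so $\Max(\pp)$ is a WD space.

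There is no genuine obstacle here, since the heavy lifting was done in the proofs of Corollary \ref{equation1} and Corollary \ref{equation3}; the only point requiring a moment's care is to justify that when $\cl_{\pp}(A) \in \WD(\pp)$ with $A \in \Irr(\Max(\pp))$, the matching element from Corollary \ref{equation3} must be of the closure form (not a principal ideal in $\pp \setminus \Max(\pp)$), which is immediate from the non-emptiness of $\cl_{\pp}(A) \cap \Max(\pp)$.
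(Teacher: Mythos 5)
Your proof is correct and follows essentially the same route as the paper: both directions reduce to the decomposition equations for $\Irr(\pp)$ (Corollary \ref{equation1}) and $\WD(\pp)$ (Corollary \ref{equation3}), which is exactly where the paper also places the real work. The paper writes the same argument as a chain of set equalities via the embedding $j$ and properties (P2)/(P4), whereas you argue element-wise in the converse, but the content is identical.
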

	\begin{proof}
		Suppose that $\Max(\pp)$ is a WD space. Then $\WD(\Max(\pp))=\Irr(\Max(\pp))$.
		Thus
		\begin{align*}
			\WD(\pp)  & =\ua_{\WD(\pp)}\eta_{\pp}(\Max(\pp))\cup \eta_{\pp}(\pp\backslash \Max (\pp)) &({\rm~by ~(P2)})\\
			&=j(\WD(\Max(\pp)))\cup \eta_{\pp}(\pp\backslash \Max (\pp))&({\rm~by ~(P4)})\\
			&=j(\Irr(\Max(\pp)))\cup \eta_{\pp}(\pp\backslash \Max (\pp))&({\rm ~by ~ hypothesis})\\
			& =\ua_{\Irr(\pp)}\eta_{\pp}(\Max(\pp))\cup \eta_{\pp}(\pp\backslash \Max (\pp))&({\rm ~by ~ Proposition~} \ref{prop:embed})\\
			&=\Irr(\pp).&({\rm ~by ~ Corollary~ }\ref{equation0})
		\end{align*}
		
		For the converse, if $\Sigma\pp$ is a WD space, then $\WD(\pp)=\Irr(\pp)$.
		According to Corollary \ref{equation3} and Proposition \ref{prop:embed}, we have
		\begin{align*}
			\WD(\Max(\pp)) & =j^{-1}(\ua_{\WD(\pp)}\eta_{\pp}(\Max(\pp))) \\
			& =j^{-1}(\ua_{\Irr(\pp)}\eta_{\pp}(\Max(\pp)))\\
			&=\Irr(\Max(\pp)).
		\end{align*}
	\end{proof}
		Immediately, we can obtain the main result.
	\begin{theorem}
		A $T_1$ space $X$ is a WD space if and only if  its Xi-Zhao model endowed with the Scott topology  is a WD space.
	\end{theorem}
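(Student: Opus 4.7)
The plan is to chain together two facts already established in the excerpt, so that the final theorem reduces to a one-line citation argument. First, by the definition of ``Xi-Zhao model'' (where $P$ is a bounded complete algebraic poset model of $X$) together with Lemma~\ref{dcpomodel}(1), the maximal point space $\Max(\pp)$ of the Xi-Zhao dcpo model is homeomorphic to the maximal point space $\Max(P)$, which in turn is homeomorphic to $X$. Second, Proposition~\ref{wd} states that $\Max(\pp)$ is a WD space if and only if $\Sigma\pp$ is a WD space.

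I would then observe that being a WD space is a purely topological property: it is defined via irreducible closed sets and the universal property of WD sets with respect to continuous maps to well-filtered spaces, both of which are preserved under homeomorphism. Consequently the WD property transfers freely between $X$ and $\Max(\pp)$. Chaining the two equivalences gives
\begin{align*}
X \text{ is a WD space} \ \Longleftrightarrow\ \Max(\pp) \text{ is a WD space} \ \Longleftrightarrow\ \Sigma\pp \text{ is a WD space},
\end{align*}
which is exactly the stated theorem.

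There is no real obstacle at this final step: all the difficulty has already been absorbed into the proofs of Corollary~\ref{equation3} and Proposition~\ref{wd}, which rely on the equation characterization of $\WD(\pp)$ in terms of $\WD(\Max(\pp))$ obtained from the transfinite construction of well-filtered reflections (Claims~\ref{embed2} and~\ref{equal}) and on the embedding result for $j$ in Proposition~\ref{prop:embed}. Once those are in place, the main theorem is immediate and only needs to be stated.
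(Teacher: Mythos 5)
Your proposal is correct and follows exactly the paper's route: the paper derives this theorem "immediately" from Proposition~\ref{wd} together with the homeomorphism $X \cong \Max(\pp)$ guaranteed by the definition of the Xi-Zhao model, which is precisely your chain of equivalences. The observation that the WD property is a topological invariant is the only (implicit) step needed, and you state it explicitly, which is fine.
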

	\section{A further discussion}
	
	In this section, we introduce a new $T_0$ topological space ---- the ``WK spaces" lying between $T_0$ spaces and ``Rudin spaces", and provide a uniform way to  some properties known to be preserved by the Xi-Zhao model.
	
	\begin{definition}
		A $T_0$ space $X$ is called a \emph{WK space} if every closed WD set is a KF-set, that is, $\KF(X)=\WD(X)$.
	\end{definition}
	
	Applying Lemmas \ref{wf} and \ref{subset}, it is clear that every well-filtered spaces is a  WK space and every Rudin space is a WK space. The next two examples illustrate that these implications are strict.
	\begin{example}
		Consider the set $\R$ of all real numbers equipped with the co-countable topology $\tau_{coc}$.
		The space $(\R, \tau_{coc})$ is well-filtered, then it is a WK space. But it is not a Rudin space.
	\end{example}
	
	\begin{example}\label{cof}
		Consider the set $\n$ of all natural numbers endowed with the cofinite topology $\tau_{cof}$.
		The space $(\n, \tau_{cof})$ is  a Rudin space, then it is a WK space. But  it is not well-filtered.
	\end{example}
	Therefore, WK spaces are not always Rudin spaces or well-filtered spaces.
How about the relationship between WK spaces and WD spaces?
	Naturally, there are two questions as follows.
	\begin{question}
		Is any closed WD set  a KF-set in a $T_1$ space?
	\end{question}
	\begin{question}
		Is any $T_1$  WK  space  a Rudin  space?
	\end{question}
	
Like WD spaces,	we will prove that a $T_1$ space $X$ is a WK space if and only if its Xi-Zhao model endowed with the Scott topology  is a  WK space.

	\begin{proposition}\label{wk}
		Let $P$ be a bounded complete algebraic poset. Then $\Max(\pp)$ is a WK space if and only if $\Sigma\pp$ is a WK space.
	\end{proposition}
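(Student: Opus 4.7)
The plan is to mirror the proof of Proposition \ref{wd} verbatim, simply replacing the pair (\,$\Irr$,\,$\WD$\,) by the pair (\,$\WD$,\,$\KF$\,). All the real work has already been done in Corollary \ref{kfset2} and Corollary \ref{equation3}, which together give us the two \emph{parallel} pairs of equations
\begin{align*}
\KF(\pp) &=\{\cl_{\pp}(A):A\in\KF(\Max(\pp))\}\cup\{\da(x,e):(x,e)\in\pp\setminus\Max(\pp)\},\\
\WD(\pp) &=\{\cl_{\pp}(A):A\in\WD(\Max(\pp))\}\cup\{\da(x,e):(x,e)\in\pp\setminus\Max(\pp)\},
\end{align*}
together with the dual restriction formulas $\KF(\Max(\pp))=\{B\cap\Max(\pp):B\cap\Max(\pp)\neq\emptyset, B\in\KF(\pp)\}$ and the analogous one for $\WD(\Max(\pp))$.

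For the forward direction, assume $\Max(\pp)$ is a WK space, so that $\KF(\Max(\pp))=\WD(\Max(\pp))$. Substituting this equality into the displayed decomposition of $\KF(\pp)$ immediately turns it into the decomposition of $\WD(\pp)$, proving $\KF(\pp)=\WD(\pp)$ and hence that $\Sigma\pp$ is a WK space. For the converse, assume $\Sigma\pp$ is a WK space, so $\KF(\pp)=\WD(\pp)$. I then apply the restriction equations: every member of $\KF(\Max(\pp))$ has the form $B\cap\Max(\pp)$ for some $B\in\KF(\pp)=\WD(\pp)$ with $B\cap\Max(\pp)\neq\emptyset$, and such intersections are exactly the elements of $\WD(\Max(\pp))$ by Corollary \ref{equation3}. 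Thus $\KF(\Max(\pp))=\WD(\Max(\pp))$ and $\Max(\pp)$ is WK.

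There is essentially no obstacle to overcome here: the proof is a direct diagram-chase through the two decomposition corollaries. The only point that might require a sentence of care is that in both decompositions the ``pathological'' summand $\{\da(x,e):(x,e)\in\pp\setminus\Max(\pp)\}$ is \emph{identical} on both sides, so it cancels in the comparison and does not spoil either implication. This is the reason the argument reduces cleanly to a statement purely about $\Max(\pp)$, and is exactly why Proposition \ref{wd} went through in the same way.
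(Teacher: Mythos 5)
Your proposal is correct and follows essentially the same route as the paper: the paper's own proof likewise reduces the statement to the equality $\KF(\Max(\pp))=\WD(\Max(\pp))$ via the parallel decompositions (phrased there through the map $j$ and conditions (P2)/(P4), which are just the Corollary \ref{kfset2}/\ref{equation3} formulas in different notation). If anything, your explicit use of the two restriction formulas in the converse direction is cleaner than the paper's version, which contains a copy-paste slip referring to WD spaces and $\Irr(\pp)$ where WK spaces and $\KF(\pp)$ are meant.
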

	\begin{proof}
		Suppose that $\Max(\pp)$ is a WK space. Then $\WD(\Max(\pp))=\KF(\Max(\pp))$.
		It follows that
		\begin{align*}
			\WD(\pp)  & =\ua_{\WD(\pp)}\eta_{\pp}(\Max(\pp))\cup \eta_{\pp}(\pp\backslash \Max (\pp)) \\
			&=j(\WD(\Max(\pp)))\cup \eta_{\pp}(\pp\backslash \Max (\pp))\\
			&=j(\KF(\Max(\pp)))\cup \eta_{\pp}(\pp\backslash \Max (\pp))\\
			& =\ua_{\KF(\pp)}\eta_{\pp}(\Max(\pp))\cup \eta_{\pp}(\pp\backslash \Max (\pp))\\
			&=\KF(\pp).
		\end{align*}
		
		For the converse, if $\Sigma\pp$ is a WD space, then $\WD(\pp)=\Irr(\pp)$.
		According to Corollary \ref{equation3} and Proposition \ref{prop:embed}, we have
		\begin{align*}
			\WD(\Max(\pp)) & =j^{-1}(\ua_{\WD(\pp)}\eta_{\pp}(\Max(\pp))) \\
			& =j^{-1}(\ua_{\Irr(\pp)}\eta_{\pp}(\Max(\pp)))\\
			&=\Irr(\Max(\pp)).
		\end{align*}
	\end{proof}
	Immediately, we can obtain the following conclusions.
	\begin{theorem}
		A $T_1$ space $X$ is a WK space if and only if  its Xi-Zhao model endowed with the Scott topology  is a WK space.
	\end{theorem}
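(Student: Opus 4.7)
The plan is to derive this theorem as an immediate corollary of Proposition \ref{wk} combined with the existence of the Xi-Zhao dcpo model. First, I would invoke Lemma \ref{dcpomodel2}, which asserts that for any $T_1$ space $X$ there is a bounded complete algebraic poset $P$ (for instance, $P = Zh(X)$) such that $X$ is homeomorphic to $\Max(\pp)$. Hence, without loss of generality, we may identify $X$ with $\Max(\pp)$ as topological spaces.

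Next I would observe that being a WK space is clearly a topological invariant: it is defined purely via the equality $\KF(X) = \WD(X)$, and both $\KF$ and $\WD$ are defined in terms of continuous maps and closed sets, so any homeomorphism $X \cong X'$ bijectively transports closed KF-subsets to closed KF-subsets and closed WD subsets to closed WD subsets. Consequently, $X$ is a WK space if and only if $\Max(\pp)$ is a WK space.

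Finally, I would apply Proposition \ref{wk}, which establishes the equivalence $\Max(\pp)$ is a WK space if and only if $\Sigma \pp$ is a WK space, and chain the two equivalences together to conclude that $X$ is a WK space if and only if $\Sigma \pp$ is a WK space, as required.

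There is essentially no obstacle here, since all the genuine technical work has already been packaged into the characterizations of $\KF(\pp)$ and $\WD(\pp)$ (Corollaries \ref{kfset2} and \ref{equation3}) and distilled into Proposition \ref{wk}. The only point demanding mild care is to make the identification $X \cong \Max(\pp)$ explicit before applying Proposition \ref{wk}, since the latter is stated for the maximal point space of an abstract bounded complete algebraic poset rather than for an arbitrary $T_1$ space.
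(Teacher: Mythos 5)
Your proposal is correct and follows exactly the route the paper intends: the paper states the theorem as an immediate consequence of Proposition \ref{wk} together with the fact that the Xi-Zhao model $\pp$ satisfies $X\cong\Max(\pp)$ (Lemmas \ref{dcpomodel} and \ref{dcpomodel2}), and your explicit remark that the WK property is a topological invariant is precisely the small step the paper leaves implicit.
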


	From the proofs of Propositions \ref{wd} and \ref{wk}, there are certain rules to follow.
	To develop a general framework for proving these properties in the  Xi-Zhao model, we introduce the following  concepts.
	
	Let ${\bf Top_0}$ be the category of all $T_0$ spaces with continuous mappings and ${\bf Set}$ the category of all sets with mappings.
	
	\begin{definition}(\cite{xu_h-sober_2021})
		A covariant functor ${\rm H} : {\bf Top_0} \longrightarrow {\bf Set}$ is called a \emph{subset system} on ${\bf Top_0}$
		provided that the following two conditions are satisfied:
		\begin{itemize}
			\item[(1)] ${\mathcal{S}}(X)\subseteq {\rm H} (X) \subseteq 2^X$ for each $T_0$ space $X$.
			\item[(2)]  For any continuous mapping $f:X\longrightarrow Y$ in ${\bf Top_0}$, ${\rm H} (f)(A)=f(A)\in {\rm H} (Y)$ for all $A\in {\rm H} (X)$.
		\end{itemize}
	\end{definition}
	For a subset system ${\rm H}  : {\bf Top_0} \longrightarrow {\bf Set}$ and a  $T_0$ space $X$, let ${\rm H}_c (X) = \{\overline{A} : A \in {\rm H} (X)\}.$
	\begin{definition}(\cite{xu_h-sober_2021})
		A subset system ${\rm H}: {\bf Top_0} \longrightarrow {\bf Set}$ is called an \emph{irreducible subset system},
		or an \emph{R-subset system} for short, if ${\rm H}(X) \subseteq {\rm Irr}(X)$ for each $T_0$ space $X$.
	\end{definition}
	From Lemma \ref{p123},  the following result holds immediately.
	\begin{lemma}
		For an R-subset system ${\rm H}: {\bf Top_0} \longrightarrow {\bf Set}$ and a bounded complete algebraic poset $P$, if ${\rm H}_c(\pp)\subseteq{\rm H}(\pp)$, then the pair $\langle P_H({\rm H}_c(\pp)), \eta_{\pp}\rangle$  satisfies (P1)$\sim$(P3).
	\end{lemma}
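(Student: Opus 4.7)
The plan is to reduce this to Lemma \ref{p123} by taking $\G = {\rm H}_c(\pp)$ and verifying the sandwich condition $\si(\pp)\subseteq {\rm H}_c(\pp)\subseteq \Irr(\pp)$. The hypothesis ${\rm H}_c(\pp)\subseteq {\rm H}(\pp)$ plays no role in checking (P1)$\sim$(P3) directly, but it keeps ${\rm H}_c$ internal to ${\rm H}$, which is the setting in which we intend to quote the subset-system machinery later; so the real work is the two inclusions.

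First I would establish ${\rm H}_c(\pp)\subseteq \Irr(\pp)$. Since ${\rm H}$ is an R-subset system, every $A\in {\rm H}(\pp)$ lies in $\Irr(\Sigma\pp)$. By Lemma \ref{irrset}, the Scott closure of an irreducible subset of $\Sigma\pp$ is still irreducible, hence irreducible closed. Thus every $\overline{A}\in {\rm H}_c(\pp)$ belongs to $\Irr(\pp)$.

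Next I would show $\si(\pp)\subseteq {\rm H}_c(\pp)$. By the defining condition $\mathcal{S}(\pp)\subseteq {\rm H}(\pp)$ of a subset system, for every $(x,e)\in\pp$ the singleton $\{(x,e)\}$ lies in ${\rm H}(\pp)$, so its Scott closure $\overline{\{(x,e)\}}=\da (x,e)$ belongs to ${\rm H}_c(\pp)$. This gives exactly $\si(\pp)\subseteq {\rm H}_c(\pp)$.

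With both inclusions in hand, Lemma \ref{p123} applies verbatim to $\G={\rm H}_c(\pp)$, yielding that $\langle P_H({\rm H}_c(\pp)),\eta_{\pp}\rangle$ satisfies (P1)$\sim$(P3). There is no genuine obstacle: the only place one might be tempted to work harder is in verifying (P3), but the proof of Lemma \ref{p123} already shows that $\eta_{\pp}(C)=\da_{\pp^s}C\subseteq \si(\pp)$ for any Scott closed $C\subseteq \pp\setminus\Max(\pp)$, and our inclusion $\si(\pp)\subseteq {\rm H}_c(\pp)$ is precisely what is needed for $\eta_{\pp}(C)$ to be closed in $P_H({\rm H}_c(\pp))$. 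Hence the lemma follows without additional argument.
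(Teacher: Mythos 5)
Your proof is correct and matches the paper's approach: the paper states this lemma follows immediately from Lemma \ref{p123}, and you simply make explicit the two inclusions $\si(\pp)\subseteq {\rm H}_c(\pp)\subseteq \Irr(\pp)$ (via $\mathcal{S}(\pp)\subseteq {\rm H}(\pp)$ and the R-subset-system condition together with Lemma \ref{irrset}) that justify applying that lemma. Your observation that the hypothesis ${\rm H}_c(\pp)\subseteq{\rm H}(\pp)$ is not needed for (P1)$\sim$(P3) themselves is also accurate.
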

	\begin{definition}
		A subset system ${\rm H}: {\bf Top_0} \longrightarrow {\bf Set}$ is called  \emph{dcpo model determined},
		if ${\rm H}_c(X)\subseteq{\rm H}(X)$ for each $T_0$ space $X$, and the pair $\langle P_H({\rm H}_c(\pp)),\eta_{\pp}\rangle$ satisfies (P1), (P2) and
		\begin{itemize}
			\item[(P4)] $j({\rm H}_c(\Max(\pp)))=\ua_{{\rm H}_c(\pp)}\eta_{\pp}(\Max (\pp))$, where $j(A)=\cl_{\pp}(A)$ for all $ A\in {\rm H}_c(\Max(\pp))$, 
		\end{itemize}
		for each  bounded complete algebraic poset $P$.
	\end{definition}
	For a  dcpo model determined subset system ${\rm H}$,  the map $j:({\rm H}_c(\Max(\pp)),\subseteq)\longrightarrow ({\rm H}_c(\pp),\subseteq)$ is well-defined,  and in addition,  it is an order embedding since (P4) holds. Naturally, each of the subset systems $\mathcal{S}$, $\kf$, $\wds$ and $\irr$ is    dcpo model determined.
	There are some good properties that dcpo model determined subset systems have.
	

		\begin{corollary}
			Let $~{\rm H}: {\bf Top_0} \longrightarrow {\bf Set}$ be a dcpo model determined subset system and $P$ a bounded complete algebraic poset.
			Then we have
			$$	{\rm H}_c(\pp) = \{\cl_{\pp}(A):A\in {\rm H}_c ( \Max (\pp))\}\cup\{(\da (x,e): (x,e)\in \pp\backslash \Max (\pp)\}.$$
		\end{corollary}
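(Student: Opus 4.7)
The plan is to obtain the decomposition by a direct application of the two defining axioms (P2) and (P4) for a dcpo model determined subset system, mirroring the pattern already used in Corollaries \ref{equation1}, \ref{kfset2} and \ref{equation3} for the specific systems $\irr$, $\kf$ and $\wds$. Since $\rm H$ is assumed dcpo model determined, the pair $\langle P_H({\rm H}_c(\pp)), \eta_{\pp}\rangle$ fits into the abstract framework $\langle \Y, f\rangle$ studied in Section 3, so I would simply transport the $\Y$-level identities already established in that section down to $\pp$.

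Concretely, I would proceed in three short steps. First, I would instantiate (P2) with the pair $\langle P_H({\rm H}_c(\pp)), \eta_{\pp}\rangle$, whose underlying set is ${\rm H}_c(\pp)$; this immediately yields the disjoint decomposition
$${\rm H}_c(\pp) = \ua_{{\rm H}_c(\pp)}\eta_{\pp}(\Max(\pp)) \cup \eta_{\pp}(\pp \backslash \Max(\pp)).$$
Second, by the definition of the canonical map, $\eta_{\pp}((x,e)) = \overline{\{(x,e)\}} = \da(x,e)$, so the right-hand summand rewrites directly as $\{\da(x,e) : (x,e) \in \pp \backslash \Max(\pp)\}$. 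Third, by (P4) with $j(A) = \cl_{\pp}(A)$, the left-hand summand equals $j({\rm H}_c(\Max(\pp))) = \{\cl_{\pp}(A) : A \in {\rm H}_c(\Max(\pp))\}$. Combining the three identities produces the stated equation.

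I do not expect any substantive obstacle. The axioms packaged into the definition of ``dcpo model determined'' were crafted precisely so that this decomposition would follow mechanically, and all the hard work, namely the Scott-closedness of images under $\eta_{\pp}$, the structural analysis of irreducible, $\rm KF$ and $\rm WD$ sets in $\pp$, and the identification of $\ua_{{\rm H}_c(\pp)}\eta_{\pp}(\Max(\pp))$ with $j({\rm H}_c(\Max(\pp)))$, has already been carried out in Propositions \ref{irr1} and \ref{kfset} and in Claims \ref{embed2} and \ref{equal}. The present argument is therefore essentially a syntactic chaining of (P2), the definition of $\eta_{\pp}$, and (P4), with no further case analysis required.
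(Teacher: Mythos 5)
Your proposal is correct and follows exactly the route the paper intends: the corollary is stated without a separate proof precisely because it is the mechanical chaining of (P2) applied to the pair $\langle P_H({\rm H}_c(\pp)),\eta_{\pp}\rangle$, the identity $\eta_{\pp}((x,e))=\da(x,e)$, and (P4), which is the same chain the paper writes out explicitly in the first display of the proof of Proposition \ref{key}. No gaps.
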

		Unfortunately, if $A\cap\Max (\pp)$ is nonempty,  $A\cap\Max (\pp)\in{\rm H}_c ( \Max (\pp)) $ does not always hold for all $A\in{\rm H}_c ( \pp)$.
		\begin{proposition}\label{key}
			Let $~{\rm H}, {\rm H'}: {\bf Top_0} \longrightarrow {\bf Set}$ be two dcpo model determined subset systems and $P$ a bounded complete algebraic poset.
			Then  ${\rm H}_c(\pp)={\rm H}'_c(\pp)$   if and only if ${\rm H}_c(\Max(\pp))={\rm H}'_c(\Max(\pp))$.
		\end{proposition}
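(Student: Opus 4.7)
The plan is to exploit the structural decomposition just established, namely that for any dcpo model determined subset system $\rm H$ we have
\[
{\rm H}_c(\pp) = \{\cl_{\pp}(A):A\in {\rm H}_c(\Max(\pp))\}\cup\{\da(x,e): (x,e)\in \pp\setminus \Max(\pp)\},
\]
together with property (P4), which says $j({\rm H}_c(\Max(\pp)))=\ua_{{\rm H}_c(\pp)}\eta_{\pp}(\Max(\pp))$, and the fact from Proposition \ref{prop:embed} that $j$ is injective with inverse given by intersecting with $\Max(\pp)$. The backward implication will be the easy one; the forward implication needs only injectivity of $j$.

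For the backward direction, suppose ${\rm H}_c(\Max(\pp))={\rm H}'_c(\Max(\pp))$. Applying the decomposition displayed above to both ${\rm H}$ and ${\rm H}'$, the two sets $\{\cl_{\pp}(A):A\in {\rm H}_c(\Max(\pp))\}$ and $\{\cl_{\pp}(A):A\in {\rm H}'_c(\Max(\pp))\}$ are equal, while the ``lower tail'' $\{\da(x,e): (x,e)\in \pp\setminus \Max(\pp)\}$ is common to both formulas and does not depend on the choice of ${\rm H}$. Taking unions yields ${\rm H}_c(\pp)={\rm H}'_c(\pp)$.

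For the forward direction, suppose ${\rm H}_c(\pp)={\rm H}'_c(\pp)$. Intersecting with the upper set $\eta_{\pp}(\Max(\pp))^{\ua}$ in $\Irr(\pp)$ and using (P4) for both systems gives
\[
j({\rm H}_c(\Max(\pp)))=\ua_{{\rm H}_c(\pp)}\eta_{\pp}(\Max(\pp))=\ua_{{\rm H}'_c(\pp)}\eta_{\pp}(\Max(\pp))=j({\rm H}'_c(\Max(\pp))).
\]
Since $j$ is injective (by Proposition \ref{prop:embed}), applying $j^{-1}$ (the intersection-with-$\Max(\pp)$ map) to both sides yields ${\rm H}_c(\Max(\pp))={\rm H}'_c(\Max(\pp))$.

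The only subtle point I anticipate is making sure that the ``lower tail'' $\{\da(x,e):(x,e)\in\pp\setminus\Max(\pp)\}$ is genuinely disjoint, or at least consistently handled, with respect to the ``upper part'' $j({\rm H}_c(\Max(\pp)))$ when one takes unions and then intersects back; however, this is precisely what (P2) ensures, since $\eta_{\pp}(\pp\setminus\Max(\pp))$ is a lower set disjoint from $\ua_{{\rm H}_c(\pp)}\eta_{\pp}(\Max(\pp))$. Once this bookkeeping is clear, the proof is purely formal.
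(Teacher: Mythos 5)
Your proposal is correct and takes essentially the same route as the paper's own proof: both directions rest on the (P2)$+$(P4) decomposition of ${\rm H}_c(\pp)$ into $j({\rm H}_c(\Max(\pp)))$ together with the common lower tail $\eta_{\pp}(\pp\backslash\Max(\pp))$, with the injectivity of $j$ (equivalently, applying $j^{-1}$) handling the forward implication. The corollary you invoke for the backward direction is just the paper's inline (P2)/(P4) computation restated, so there is no substantive difference.
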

		\begin{proof}
			Suppose that ${\rm H}_c(\Max(\pp))={\rm H}'_c(\Max(\pp))$.
			According to  (P2) and (P4), we have
			\begin{align*}
				{\rm H}_c(\pp)  & =\ua_{{\rm H}_c(\pp)}\eta_{\pp}(\Max(\pp))\cup \eta_{\pp}(\pp\backslash \Max (\pp)) \\
				&=j({\rm H}_c(\Max(\pp)))\cup \eta_{\pp}(\pp\backslash \Max (\pp))\\
				&=j({\rm H}'_c(\Max(\pp)))\cup \eta_{\pp}(\pp\backslash \Max (\pp))\\
				& =\ua_{{\rm H}'_c(\pp)}\eta_{\pp}(\Max(\pp))\cup \eta_{\pp}(\pp\backslash \Max (\pp))\\
				&={\rm H}'_c(\pp).
			\end{align*}
			
			For the converse, assume that ${\rm H}_c(\pp)={\rm H}'_c(\pp)$.
			By (P4), we have
			\begin{align*}
				{\rm H}_c(\Max(\pp)) & =j^{-1}(\ua_{{\rm H}_c(\pp)}\eta_{\pp}(\Max(\pp))) \\
				& =j^{-1}(\ua_{{\rm H}'_c(\pp)}\eta_{\pp}(\Max(\pp)))\\
				&={\rm H}'_c(\Max(\pp)).
			\end{align*}
		\end{proof}
		\begin{definition}
			A $T_0$ space $X$ is called an \emph{${\rm H}$-model} space, if  there exist two different dcpo model determined subset systems $~{\rm H}, {\rm H}': {\bf Top_0} \longrightarrow {\bf Set}$ such that ${\rm H}_c(X)={\rm H}'_c(X)$.
		\end{definition}
		That two subset systems $~{\rm H}$, ${\rm H'}$ are different means that there exists a $T_0$ space $Y$ such that ${\rm H}_c(Y)\neq{\rm H}'_c(Y)$.
		In particularly, the discrete space $X=\{x\}$ is an H-model space.
		
		According to Proposition \ref{key}, we have the  following theorem.
		\begin{theorem}
			A $T_1$ space is an ${\rm H}$-model space  if and only if  its Xi-Zhao model endowed with the Scott topology is an H-model space.
		\end{theorem}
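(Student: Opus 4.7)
The plan is to derive this theorem as an essentially immediate consequence of Proposition \ref{key}, since the notion of ${\rm H}$-model space is explicitly designed so that the equality $H_c(X)=H'_c(X)$ is the defining condition, and Proposition \ref{key} transfers precisely such equalities between $\Max(\pp)$ and $\Sigma\pp$ for any pair of dcpo model determined subset systems. The ``different'' clause in the definition of ${\rm H}$-model space depends only on the existence of \emph{some} $T_0$ space $Y$ with $H_c(Y)\neq H'_c(Y)$, and is therefore independent of the particular space $X$ or $\pp$ under consideration; it travels along unchanged in both directions.

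Concretely, for the forward direction I would assume $X$ is an ${\rm H}$-model space and fix two different dcpo model determined subset systems $H,H'\colon {\bf Top_0}\longrightarrow {\bf Set}$ with $H_c(X)=H'_c(X)$. Using the homeomorphism $X\cong\Max(\pp)$ supplied by Lemma \ref{dcpomodel2}, together with the fact that the subset systems are defined functorially on ${\bf Top_0}$, this gives $H_c(\Max(\pp))=H'_c(\Max(\pp))$. Proposition \ref{key} then yields $H_c(\pp)=H'_c(\pp)$, so $\Sigma\pp$ is an ${\rm H}$-model space via the same pair $H,H'$.

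For the reverse direction I would assume $\Sigma\pp$ is an ${\rm H}$-model space, fix two different dcpo model determined subset systems $H,H'$ with $H_c(\pp)=H'_c(\pp)$, and apply the other direction of Proposition \ref{key} to obtain $H_c(\Max(\pp))=H'_c(\Max(\pp))$. Transporting back along the homeomorphism $\Max(\pp)\cong X$ gives $H_c(X)=H'_c(X)$, so $X$ is an ${\rm H}$-model space.

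There is no substantive obstacle here, since all the real work has already been absorbed into Proposition \ref{key} (which in turn rests on properties (P2) and (P4) of dcpo model determined subset systems, together with Claims \ref{embed2} and \ref{equal} for the concrete cases). The only thing to be careful about in writing out the proof is to note explicitly that the ``$H$ and $H'$ are different'' condition is a property of the pair of functors, not of the space, so it is preserved unchanged on passing between $X$ and $\Sigma\pp$; and to invoke the homeomorphism between $X$ and $\Max(\pp)$ together with the functoriality of $H,H'$ to pass between $H_c(X)$ and $H_c(\Max(\pp))$.
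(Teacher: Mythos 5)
Your proposal is correct and matches the paper's approach: the paper derives the theorem directly from Proposition \ref{key}, exactly as you do, with the homeomorphism $X\cong\Max(\pp)$ and the functoriality of the subset systems handling the transfer between $X$ and $\Max(\pp)$. Your explicit remark that the ``different'' condition on the pair $H,H'$ is a property of the functors rather than of the space is a worthwhile detail that the paper leaves implicit.
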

		Clearly, sober spaces, Rudin spaces, WD spaces and WK spaces are ${\rm H}$-model spaces due to their definitions.	
		According to Lemma \ref{wf},  well-filtered spaces are  H-model spaces, and then we can conclude the following result whose proof is different from Theorem 3 in \cite{xi17}.
		\begin{proposition}(\cite{xi17})\label{wf2}
			Let $P$ be a bounded complete algebraic poset. Then $\Max(\pp)$ is  well-filtered  if and only if $\Sigma\pp$ is  well-filtered.
		\end{proposition}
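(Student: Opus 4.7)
The plan is to derive Proposition \ref{wf2} as an immediate application of the ${\rm H}$-model framework just developed, rather than repeating a direct argument on filtered families of compact saturated sets as in \cite{xi17}.

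First, I would invoke Lemma \ref{wf}, which reformulates well-filteredness of a $T_0$ space $X$ purely as the equality $\WD(X) = \si(X)$ (equivalently $\wds_c(X) = \mathcal{S}_c(X)$, since closed WD sets are already closed and $\si(X) = \mathcal{S}_c(X)$). Thus the well-filteredness of both $\Max(\pp)$ and $\Sigma\pp$ can be encoded as the coincidence of the two closed-subset systems $\wds_c$ and $\mathcal{S}_c$ evaluated at these spaces.

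Next, I would appeal to the remark following the definition of dcpo model determined subset systems, where it is stated that each of $\mathcal{S}$, $\kf$, $\wds$ and $\irr$ is dcpo model determined; in particular, both $\wds$ and $\mathcal{S}$ satisfy (P1), (P2), (P4) at every Xi-Zhao model $\pp$. Moreover $\wds$ and $\mathcal{S}$ are different in the sense of the paragraph preceding the definition of ${\rm H}$-model spaces, because there exist $T_0$ spaces (indeed non-sober ones such as Example \ref{cof}) where $\wds_c$ strictly contains $\mathcal{S}_c$.

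With these ingredients in place, the conclusion follows directly from Proposition \ref{key} applied to ${\rm H} = \wds$ and ${\rm H}' = \mathcal{S}$: the equality $\wds_c(\pp) = \mathcal{S}_c(\pp)$ holds if and only if $\wds_c(\Max(\pp)) = \mathcal{S}_c(\Max(\pp))$, i.e.\ $\Sigma\pp$ is well-filtered if and only if $\Max(\pp)$ is well-filtered. There is no real obstacle here beyond unwinding the definitions; the main conceptual point, already handled in Proposition \ref{key}, is the transfer of the equality of closed-${\rm H}$-subset collections between $\pp$ and its maximal-point subspace via the embedding $j$ and condition (P4). The novelty compared with \cite{xi17} is that no explicit manipulation of filtered families in $Q(\pp)$ is required: everything is subsumed by the ${\rm H}$-model calculus.
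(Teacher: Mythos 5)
Your proposal is correct and coincides with the paper's own derivation: the paper likewise obtains Proposition~\ref{wf2} by observing (via Lemma~\ref{wf}) that well-filteredness is exactly the equality $\WD(X)=\si(X)$ of two dcpo model determined subset systems, and then applying the ${\rm H}$-model transfer result (Proposition~\ref{key}) to ${\rm H}=\wds$ and ${\rm H}'=\mathcal{S}$. The only cosmetic difference is your explicit remark that $\wds$ and $\mathcal{S}$ are different subset systems, which is needed for the ``${\rm H}$-model space'' terminology but not for the application of Proposition~\ref{key} itself.
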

		Figure \ref{picture} reveals certain relations among some kinds of H-model spaces, which contain all combinations of dcpo model determined subset systems we know in the previous paragraphs.
		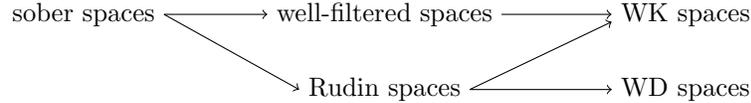
\begin{figure}[H]
			\centering
			\tikzstyle{format}=[rectangle,draw=white,thin,fill=white]		
			\tikzstyle{test}=[diamond,aspect=2,draw,thin]
			\tikzstyle{point}=[coordinate,on grid,]
			\begin{tikzpicture}
				\node[format] (sober){sober spaces};
				\node[format,right of=sober,node distance=40mm] (wf){well-filtered spaces};
				\node[format,right of=wf,node distance=40mm] (xu){WK spaces};
				\node[format,below of=wf,node distance=10mm] (rd){Rudin spaces};				
				\node[format,below of=xu,node distance=10mm] (wds){WD spaces};				
				\draw[->] (sober.east)--(wf.west);
				\draw[->] (sober.east)--(rd.west);
				\draw[->] (rd.east)--(wds.west);
				\draw[->] (rd.east)--([yshift=-1mm]xu.west);
				\draw[->] (wf.east)--(xu.west);
			\end{tikzpicture}
			\caption{Certain relations among some examples of H-model spaces.}
			\label{picture}
		\end{figure}
		Besides that, are there other dcpo model determined subset systems and other kinds of $T_0$ spaces the Xi-Zhao model preserves?		
		Inspired by the work on weak sober spaces and weak well-filtered spaces in \cite{chen22,chen_xi-zhao_2023}, we define a new kind of spaces that are weak than  H-model spaces and can be preserved by the Xi-Zhao model.
		
		For a subset system ${\rm H}$ and a  $T_0$ space $X$, let ${\rm H}^*(X) ={\rm H}(X)\backslash \{X\}$ and ${\rm H}^*_c(X) ={\rm H}_c(X)\backslash \{X\}.$ 		
		The following lemma holds by Lemma \ref{subset}.
		\begin{lemma}\label{subset2}
			Let $X$ be a $T_0$ space. Then $\si^*(X)\subseteq \KF^*(X)\subseteq \WD^*(X)\subseteq\Irr^*(X)$.
		\end{lemma}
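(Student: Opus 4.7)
The plan is to reduce this directly to Lemma \ref{subset}. That lemma already establishes the chain
\[\si(X)\subseteq \KF(X)\subseteq \WD(X)\subseteq \Irr(X)\]
for any $T_0$ space $X$. The starred versions are obtained by the uniform operation of removing the single element $\{X\}$ (when $X$ itself happens to lie in any of these families), so the inclusions are preserved by the set-theoretic fact that $A\subseteq B$ implies $A\setminus C\subseteq B\setminus C$ for any set $C$.

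More explicitly, first I would unfold the definitions: $\si^*(X)=\si(X)\setminus\{X\}$, $\KF^*(X)=\KF(X)\setminus\{X\}$, $\WD^*(X)=\WD(X)\setminus\{X\}$, and $\Irr^*(X)=\Irr(X)\setminus\{X\}$. Then I would apply Lemma \ref{subset} to each adjacent pair in the chain and subtract $\{X\}$ from both sides. No topology-specific argument is required beyond Lemma \ref{subset} itself.

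The only potential subtlety — and it is not really an obstacle — is to note that although for a $T_1$ space $X$ that is not a singleton the whole space $X$ may fail to be irreducible (so that $X\notin\Irr(X)$ and the starred and unstarred families coincide), in general one cannot rule out $X\in\si(X)$, $X\in\KF(X)$, or $X\in\WD(X)$ \emph{a priori}, so the reduction really does need to go through the ``$A\subseteq B\Rightarrow A\setminus\{X\}\subseteq B\setminus\{X\}$'' observation rather than being a pure rewrite. Once this is pointed out, the proof is a one-line consequence of Lemma \ref{subset}.
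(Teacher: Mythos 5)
Your proposal is correct and matches the paper's own (one-line) justification: the paper simply states that the lemma ``holds by Lemma \ref{subset}'', i.e.\ one removes the single element $X$ from each family in the chain of Lemma \ref{subset}, exactly as you do. Your extra remark about when $X$ itself belongs to the various families is harmless but not needed for the argument.
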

		Anyway,  $\mathcal{S}^*$, $\irr^*$, $\kf^*$ and $\wds^*$ are not subset systems.

		\begin{definition}
			A $T_0$ space $X$ is called a \emph{weak ${\rm H}$-model} space, if  there exist two different dcpo model determined subset systems $~{\rm H}, {\rm G}: {\bf Top_0} \longrightarrow {\bf Set}$ such that ${\rm H}^*_c(X)={\rm G}^*_c(X)$.
		\end{definition}
		It is obvious that every  H-model space is a weak  H-model space.
		Similarly, we can get the following results.
		\begin{corollary}\label{key2}
			Let $~{\rm H}, {\rm G}: {\bf Top_0} \longrightarrow {\bf Set}$ be two dcpo model determined subset systems and $P$ a bounded complete algebraic poset.
			Then  ${\rm H}^*_c(\pp)={\rm G}^*_c(\pp)$   if and only if $~{\rm H}^*_c(\Max(\pp))={\rm G}^*_c(\Max(\pp))$.
		\end{corollary}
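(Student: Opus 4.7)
The plan is to adapt the proof of Proposition \ref{key} by tracking how the ``top'' elements $\pp$ and $\Max(\pp)$ correspond under the embedding $j$. The key preliminary observation is that $\Max(\pp)$ is dense in $\Sigma\pp$: for every $(x,e)\in\pp$, one has $(x,e)\leqslant (e,e)\in\Max(\pp)$, so every Scott open neighborhood of $(x,e)$ meets $\Max(\pp)$. Therefore $j(\Max(\pp))=\cl_{\pp}(\Max(\pp))=\pp$. Combined with Lemma \ref{closure}, which gives $A=\cl_{\pp}(A)\cap\Max(\pp)$ for $A$ closed in $\Max(\pp)$, this yields the biconditional $j(A)=\pp$ iff $A=\Max(\pp)$; in particular, $j$ restricts to a bijection of ``top-removed'' families.

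With this in hand, I would argue the forward direction as follows. Assume ${\rm H}^*_c(\Max(\pp))={\rm G}^*_c(\Max(\pp))$. Applying (P4) restricted below the top element gives
\[
	j({\rm H}^*_c(\Max(\pp)))=\ua_{{\rm H}_c(\pp)}\eta_{\pp}(\Max(\pp))\setminus\{\pp\},
\]
and likewise for ${\rm G}$. Since $\eta_{\pp}(\pp\setminus\Max(\pp))$ is disjoint from $\{\pp\}$ (each $\da(x,e)$ with $(x,e)$ non-maximal omits any maximal element of $\pp$ not of the form $(e,e)$), the (P2)-decomposition from Proposition \ref{key} restricted to ${\rm H}^*_c(\pp)$ becomes
\[
	{\rm H}^*_c(\pp)=j({\rm H}^*_c(\Max(\pp)))\cup\eta_{\pp}(\pp\setminus\Max(\pp)),
\]
and the same equation holds with ${\rm G}$ in place of ${\rm H}$. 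Equality of the two right-hand sides follows from the hypothesis, yielding ${\rm H}^*_c(\pp)={\rm G}^*_c(\pp)$.

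For the converse, assume ${\rm H}^*_c(\pp)={\rm G}^*_c(\pp)$. Intersecting both sides with the upper set $\ua\eta_{\pp}(\Max(\pp))$ and applying the inverse $j^{-1}(B)=B\cap\Max(\pp)$ from Proposition \ref{prop:embed}(4), I get
\[
	{\rm H}^*_c(\Max(\pp))=j^{-1}\!\left(\ua_{{\rm H}_c(\pp)}\eta_{\pp}(\Max(\pp))\setminus\{\pp\}\right)=j^{-1}\!\left(\ua_{{\rm G}_c(\pp)}\eta_{\pp}(\Max(\pp))\setminus\{\pp\}\right)={\rm G}^*_c(\Max(\pp)),
\]
using that $j^{-1}(\pp)=\Max(\pp)$ and that $j$ is injective so the ``top'' on one side lifts to the ``top'' on the other.

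The main technical point — and the only place one must be a little careful — is verifying that removing $\pp$ on the $\pp^s$ side exactly corresponds to removing $\Max(\pp)$ on the $\Max(\pp)^s$ side; once the density argument settles this, the rest of the proof is a cosmetic modification of Proposition \ref{key}.
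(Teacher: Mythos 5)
Your argument is correct and is exactly the adaptation of Proposition \ref{key} that the paper intends when it writes ``Similarly, we can get the following results'' without supplying a proof: decompose via (P2) and (P4), then track the top elements. The bookkeeping you supply --- that $\cl_{\pp}(\Max(\pp))=\pp$, that $j(A)=\pp$ iff $A=\Max(\pp)$ by Lemma \ref{closure}, and that no $\da(x,e)$ with $(x,e)$ non-maximal can equal $\pp$ --- is precisely the detail the paper leaves implicit, so this is the same proof with the omitted step made explicit.
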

		\begin{corollary}
			A $T_1$ space is a weak ${\rm H}$-model space  if and only if  its Xi-Zhao model endowed with the Scott topology is a weak H-model space.
		\end{corollary}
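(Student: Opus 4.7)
The plan is to reduce the corollary to Corollary \ref{key2}, using only the defining property of the Xi-Zhao model that $X$ is homeomorphic to $\Max(\pp)$. Concretely, I would first fix a $T_1$ space $X$, let $\pp$ denote its Xi-Zhao model (so that $X\cong \Max(\pp)$ as topological spaces), and observe that any homeomorphism $\varphi:X\longrightarrow \Max(\pp)$ induces, via the functoriality of every subset system ${\rm H}:{\bf Top_0}\longrightarrow{\bf Set}$, a bijection between ${\rm H}_c(X)$ and ${\rm H}_c(\Max(\pp))$ which clearly sends the top element $X$ to $\Max(\pp)$. Consequently, ${\rm H}^*_c(X)={\rm G}^*_c(X)$ holds if and only if ${\rm H}^*_c(\Max(\pp))={\rm G}^*_c(\Max(\pp))$ holds.

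For the forward direction, assume $X$ is a weak H-model space. Then there are two different dcpo model determined subset systems ${\rm H}$ and ${\rm G}$ with ${\rm H}^*_c(X)={\rm G}^*_c(X)$. By the transport step above this becomes ${\rm H}^*_c(\Max(\pp))={\rm G}^*_c(\Max(\pp))$, and Corollary \ref{key2} immediately gives ${\rm H}^*_c(\pp)={\rm G}^*_c(\pp)$; hence $\Sigma\pp$ is a weak H-model space witnessed by the same pair $({\rm H},{\rm G})$. The converse direction is symmetric: starting from ${\rm H}^*_c(\pp)={\rm G}^*_c(\pp)$, apply the reverse implication of Corollary \ref{key2} to descend to ${\rm H}^*_c(\Max(\pp))={\rm G}^*_c(\Max(\pp))$, and then transport back to $X$ via the homeomorphism.

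The only nontrivial ingredient is Corollary \ref{key2}, which has already been established, so no new combinatorial work on closed sets is required here. The mild subtlety I would double-check is that the difference condition on the two subset systems (\emph{i.e.}\ the existence of some $T_0$ space separating ${\rm H}$ and ${\rm G}$) is a property of the pair $({\rm H},{\rm G})$ itself, not of the space $X$ or $\pp$ under consideration; therefore it carries over for free between the two sides of the equivalence. There is no genuine obstacle: the work is entirely bookkeeping on top of Corollary \ref{key2} and the homeomorphism $X\cong \Max(\pp)$.
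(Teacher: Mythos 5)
Your proposal is correct and follows the same route the paper intends: the paper states this corollary as an immediate consequence of Corollary \ref{key2} (via the homeomorphism $X\cong\Max(\pp)$ and the functoriality of subset systems), which is exactly the transport-plus-\ref{key2} argument you give. The bookkeeping you flag about the ``difference'' condition being a property of the pair $({\rm H},{\rm G})$ rather than of the space is handled correctly.
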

		
		Two kinds of weak ${\rm H}$-model spaces are shown as follows.
		\begin{definition}(\cite{lu_equality_2019})
			A topological space $X$ is called \emph{weak sober} if for any  proper irreducible closed subset $A$, there exists a unique  point $x \in X$ such that $A=\overline{\{x\}}$, that is,  $\Irr^*(X)=\si^*(X)$.
		\end{definition}
		Then weak sober spaces are weak H-model spaces.
		
		\begin{definition}(\cite{lu17})
			A topological space $X$ is called \emph{weak well-filtered} if for any filtered  intersection $\bigcap_{i \in I} Q_i$ of compact saturated subsets and its any nonempty open neighborhood $U$, $U$ contains $Q_i$ for some $i$.
		\end{definition}
		\begin{lemma}
			For a  $T_0$ space $X$, the following conditions are equivalent:
			\begin{enumerate}[(1)]
				\item $X$ is weak well-filtered;	
				\item $\KF^*(X)=\si^*(X)$.
			\end{enumerate}
		\end{lemma}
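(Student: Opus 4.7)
The plan is to prove the equivalence directly from the definitions, following the pattern of Lemma \ref{wf} but with the restriction $A \neq X$ providing exactly the ``nonempty open neighborhood'' hypothesis required by weak well-filteredness. The inclusion $\si^*(X) \subseteq \KF^*(X)$ is automatic from Lemma \ref{subset2}, so both implications reduce to a single key observation each.

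For $(1) \Rightarrow (2)$, I will take $A \in \KF^*(X)$ witnessed by a filtered family $\K \subseteq Q(X)$ with $A \in m(\K)$. Since $A \neq X$, the set $X \setminus A$ is nonempty open. If $\bigcap \K \cap A = \emptyset$, then $X \setminus A$ is an open neighborhood of $\bigcap \K$, so weak well-filteredness produces some $K \in \K$ with $K \subseteq X \setminus A$, contradicting $A \cap K \neq \emptyset$. Hence there is $x \in \bigcap \K \cap A$; the closed set $\overline{\{x\}} \subseteq A$ meets every $K \in \K$ (since $x \in K$ for all such $K$), and minimality of $A$ in $m(\K)$ forces $A = \overline{\{x\}} \in \si^*(X)$.

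For $(2) \Rightarrow (1)$, assume $\bigcap \K \subseteq U$ for some filtered $\K \subseteq Q(X)$ and nonempty open $U$, while no $K \in \K$ lies in $U$. Then $C = X \setminus U$ is a proper closed set meeting every $K \in \K$. A Zorn's Lemma argument on the family of closed subsets of $C$ meeting every $K$ produces a minimal such set $A$, which is automatically minimal among all closed subsets of $X$ meeting every $K$ since any smaller such set would still lie in $C$. Thus $A \in \KF^*(X)$ (with $A \subseteq C \subsetneq X$), so by hypothesis $A = \overline{\{x\}}$ for some $x$. Saturation of each $K$ together with $\overline{\{x\}} \cap K \neq \emptyset$ forces $x \in K$ for every $K \in \K$, whence $x \in \bigcap \K \subseteq U$; but $x \in A \subseteq X \setminus U$, contradiction.

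The main obstacle is the Zorn step in the second direction: one must verify that for a descending chain $\{A_\alpha\}$ of closed subsets of $C$ each meeting every $K \in \K$, the intersection $\bigcap_\alpha A_\alpha$ still meets every $K$. This follows from the compactness of $K$ applied to the filtered family $\{A_\alpha \cap K\}$ of nonempty closed subsets of $K$ (which has nonempty total intersection by the finite intersection property), supplying an element of $K \cap \bigcap_\alpha A_\alpha$; once this chain condition is established, the remaining verifications concerning saturation and the specialization order are routine.
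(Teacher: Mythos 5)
Your proof is correct and follows essentially the same route as the paper: for $(1)\Rightarrow(2)$ you use $X\setminus A$ as the nonempty open neighborhood of $\bigcap\K$ exactly as the paper does, and for $(2)\Rightarrow(1)$ you produce a minimal closed subset of $X\setminus U$ meeting all members of $\K$ and derive the same contradiction via saturation. The only cosmetic difference is that you unfold the Zorn's-lemma/FIP argument for the existence of that minimal set, whereas the paper simply invokes the topological Rudin Lemma (Lemma~\ref{rudinlemma}) to obtain $A\in m(\K)$.
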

		\begin{proof}
			If $X$ is a singleton set, then $\KF^*(X)$ and $\si^*(X)$ are empty. We only need to consider the nontrivial case.
			
			(1)$\Rightarrow$(2) For $A\in \KF^*(X)$, there exists a filtered family $\K\subseteq Q(X)$ such that  $A\in m(\K)$.
			Suppose that $A\cap(\bigcap\K)$ is empty. Then $\bigcap\K\subseteq X\backslash A\neq \emptyset$.
			It follows that $X\backslash A $ contains $K_0$ for some  $K_0\in\K$, which contradicts  the condition that $A\cap K_0$ is nonempty.
			Pick $a\in A\cap(\bigcap\K)$. Then $\da a$ is a closed subset of $A$ that intersects all members of $\K$.
			We have $A=\da a\in \si^*(X)$ by the minimality of $A$.
			
			(2)$\Rightarrow$(1) Let $\K$ be a filtered family of compact saturated subsets and $U$ a nonempty open set with $\bigcap\K\subseteq U$.
			Suppose that $K\nsubseteq U$ for all $K\in\K$. Then there exists a closed subset $A$ of $X\backslash U$ such that $A\in m(\K)$ by Lemma \ref{rudinlemma}. Thus $A\in\KF^*(X)$  since $U$ is nonempty. It implies that $A=\da a$ for some $a\in A$.
			Hence, $a\in K$  because $A\cap K\neq \emptyset$ and $K$ is an upper set for all $K\in \K$.
			We have $a\in \bigcap\K\subseteq U$, which contradicts to $a\in A\subseteq X\backslash U$.
			Thus there is some $K_0\in \K$ such that $K_0\subseteq U$.
			So $X$ is weak well-filtered.
		\end{proof}
		Obviously, every weak well-filtered space is a weak H-model space.
		
		Similarly, by combining two of these operators mentioned in Lemma \ref{subset2}, we can obtain other examples of weak H-model spaces, and all of them can be preserved by the Xi-Zhao model.
		To avoid repetition, the detail descriptions are not displayed in this paper.
		
		
		%
		\section*{References}
		\bibliographystyle{unsrt}

	\end{document}